\renewenvironment{proof}[1][\proofname]{\par
  \pushQED{\qed}%
  \normalfont \topsep6\p@\@plus6\p@\relax
  \trivlist
  \item[\hskip\labelsep
        \bfseries
    #1\@addpunct{.}]\ignorespaces
}{%
  \popQED\endtrivlist\@endpefalse
}
\newcommand{\del}{\partial}
\newcommand{\lap}{\Delta}
\newcommand{\inv}{^{-1}}
\newcommand{\grad}{\nabla}
\newcommand{\divergence}{\grad \cdot}
\newcommand{\tauann}{\tau_{\text{ann}}}
\newcommand{\etal}{\eta_{\text{sub}}}
\newcommand{\bartauann}{\bar\tau_{\text{ann}}}
\newcommand{\annulus}{\mathcal A}
\newcommand{\qavg}{q^{\text{avg}}}
\newcommand{\eps}{\varepsilon}
\renewcommand{\epsilon}{\varepsilon}
\renewcommand{\leq}{\leqslant}
\renewcommand{\geq}{\geqslant}
\DeclareMathOperator{\sign}{sign}
\DeclareMathOperator{\trace}{tr}
\newcommand{\R}{\mathbb{R}}
\newif\iftextstyle
\everydisplay\expandafter{\the\everydisplay\textstylefalse}
\DeclarePairedDelimiter{\abs}{\lvert}{\rvert}
\DeclarePairedDelimiter{\norm}{\lVert}{\rVert}
\DeclarePairedDelimiter{\average}{\langle}{\rangle}
\newcommand{\st}{\;\iftextstyle|\else\big|\fi\;}
\newcommand{\defeq}{\stackrel{\text{def}}{=}}
\numberwithin{equation}{section}
\newtheorem{theorem}{Theorem}[section]
\newtheorem{lemma}[theorem]{Lemma}
\newtheorem{proposition}[theorem]{Proposition}
\newtheorem*{theorem*}{Theorem}
\newtheorem*{lemma*}{Lemma}
\newtheorem*{proposition*}{Proposition}
\newtheorem*{corollary*}{Corollary}
\theoremstyle{definition}
\theoremstyle{remark}
\newtheorem{remark}[theorem]{Remark}
\newtheorem*{remark*}{Remark}
\newcommand{\commentout}[1]{}
\begin{document}
\title{From homogenization to averaging  in cellular flows}%{{{1
\author{Gautam Iyer\thanks{Department of 
Mathematical Sciences, Carnegie Mellon University, Pittsburgh, PA 15213; gautam@math.cmu.edu}
\and Tomasz Komorowski\thanks{Institute of Mathematics,  UMCS,
pl. Marii Curie-Sk\l odowskiej 1,
20-031, Lublin and
IMPAN,
ul. \'{S}niadeckich 8,   00-956 Warsaw, Poland, e-mail:
komorow@hektor.umcs.lublin.pl}
\and Alexei Novikov\thanks{Department of Mathematics, Pennsylvania State University, State College PA 16802,
anovikov@math.psu.edu}\and
Lenya Ryzhik\thanks{Department of Mathematics, Stanford University,
Stanford, CA 94305, USA,
e-mail: ryzhik@math.stanford.edu}}

\maketitle

\begin{abstract}%{{{
We consider an elliptic eigenvalue problem with a fast cellular flow of amplitude $A$, in a two-dimensional domain with 
$L^2$ cells. For fixed $A$, and $L \to \infty$, the problem homogenizes, and has been well studied. Also well studied is the limit when $L$ is fixed, and $A \to \infty$. In this case the solution equilibrates along stream lines.

In this paper, we show that if \textit{both} $A \to \infty$ and $L \to \infty$, then a transition between the homogenization and averaging regimes 
occurs at $A \approx L^4$. When $A\gg L^4$, the principal Dirichlet eigenvalue is approximately constant. On the other hand, when $A\ll L^4$, the principal eigenvalue behaves like ${\bar \sigma(A)}/L^2$, where $\bar \sigma(A) \approx \sqrt{A} I$ is the effective diffusion matrix. A similar transition is observed for the solution of the exit time problem. The proof in the homogenization regime involves bounds on 
the second correctors. Miraculously, if the slow profile is quadratic, these estimates can be obtained using  
drift independent $L^p \to L^\infty$ estimates for elliptic equations with an incompressible drift. This provides effective sub and super-solutions for our problem.
\end{abstract}%}}}
\section{Introduction}%{{{1

Consider an advection diffusion equation of the form
\begin{equation}\label{intro2}
\del_t \varphi + Av(x)\cdot\nabla\varphi-\Delta\varphi = 0.
\end{equation}
where $A$ is the non-dimensional strength of a prescribed vector field $v(x)$. Under reasonable assumptions when $A \to \infty$, the solution~$\varphi$ becomes constant on the trajectories of $v$. Indeed, dividing \eqref{intro2} by $A$ and passing to the limit $A\to\infty$ formally shows
\[
v(x)\cdot\nabla\varphi=0,
\]
which, of course, forces $\varphi$ to be constant along trajectories of $v$. Well known ``averaging'' results~\cites{FW,Kifer,PS} study the slow evolution of $\varphi(t,x)$ across various trajectories.

On the other hand, if we fix $A = 1$, classical homogenization results~\cites{BLP,KOZ,PS} determine the long time behavior of solutions of \eqref{intro2}. For such results it is usually convenient to choose $\eps\ll 1$ small, and rescale~\eqref{intro2} to time scales of order $1/\epsilon^2$, and distance scales of order $1/\epsilon$. This gives
\begin{equation}\label{intro4}
\del_t \varphi_\eps +\frac{1}{\eps}v\left(\frac{x}{\eps}\right)\cdot\nabla\varphi_\eps  - \Delta\varphi_\eps = 0.
\end{equation}
Assuming $v$ is periodic, and that the initial condition varies slowly (i.e.\ $\varphi_\epsilon( x, 0)$ is independent of $\epsilon$), standard homogenization results show that $\varphi_\epsilon \to \bar\varphi$, as $\epsilon \to 0$. Further, $\bar\varphi$ is the solution of the effective problem
\begin{equation}\label{intro6}
\frac{\del\bar\varphi}{\del t}=\nabla\cdot(\bar \sigma\nabla\varphi),
\end{equation}
and $\bar\sigma$ is the effective diffusion matrix, which can be computed as follows. Define the correctors $\chi_1$, \dots, $\chi_n$ to be the mean-zero periodic solutions of
\begin{equation}\label{intro8}
-\Delta\chi_j+v(x)\cdot\nabla\chi_j=-v_j(x), \quad j=1,\dots,n.
\end{equation}
Then
\begin{equation}\label{intro10}
\bar \sigma_{ij}= \delta_{ij} + \frac{1}{\abs{Q}} \int_{Q}\nabla\chi_i\cdot\nabla\chi_j \,dx, \quad i,j=1,\dots,n.
\end{equation}
$Q$ is the period cell of the flow $v(x)$, and $\delta_{ij}$ is the Kronecker delta function.

The main focus of this paper is to study a transition between the two well known regimes described above. To this end, rescale~\eqref{intro2} by choosing time scales of the order $1/\epsilon^2$ and length scales of order $1/\epsilon$. This gives
\begin{equation}\label{intro12}
\del_t \varphi_{\eps,A} +\frac{A}{\eps}v\left(\frac{x}{\eps} \right)\cdot\nabla\varphi_{\eps,A} - \Delta\varphi_{\eps,A} = 0,
\end{equation}
where $A\gg 1$ and $\eps\ll 1$ are two \textit{independent} parameters. Of course, if we keep $\epsilon$ fixed, and send $A \to \infty$, the well known averaging results apply. Alternately, if we keep $A$ fixed and send $\epsilon \to 0$, we are in the regime of standard homogenization results. The present paper considers~\eqref{intro12} with \textit{both} $\epsilon \to 0$ and $A \to \infty$. Our main result shows that if $v$ is a $2D$ cellular flow, then we see a sharp transition between the homogenization and averaging regimes at $A \approx 1/\epsilon^4$.

Before stating our precise results (Theorems~\ref{thmEvalStrongFlow} and~\ref{thmEvalHomog} below), we provide a brief explanation as to why one expects the transition to occur at $A \approx 1/\epsilon^4$. For simplicity and concreteness, we choose the stream function $H(x_1,x_2) = \frac{1}{\pi} \sin(\pi x_1)\sin(\pi x_2)$, and define $v(x_1,x_2)=(-\del_2 H, \del_1 H )$. Even in this simple setting, to the best of our knowledge, the transition from averaging to homogenization has not been studied before.

First, for any \textit{fixed} $A$, we let $\bar\sigma(A) = (\bar \sigma_{ij}(A))$ denote the effective diffusion matrix obtained in the limit $\eps\to 0$ (see~\cite{KramerMajda} for a comprehensive review).
If $\chi_j^A$ is the mean zero, $2$-periodic solution to
\begin{equation}\label{intro16}
-\Delta\chi_j^A+Av(x)\cdot\nabla\chi_j= - Av_j(x), \quad\text{for } j \in \{1, 2\},
\end{equation}
then the effective diffusivity (as a function of $A$) is given by~\eqref{intro10}. As $A \to \infty$, the behaviour of the correctors $\chi_j^A$ is well understood~\cites{bblChildress,bblFannjiangPapanicolaou,Heinze,Koralov,bblNovikovPapanicolaouRyzhik,bblRosenbluth,bblShraiman,bblYoung}. Except on a boundary layer of order $1/\sqrt{A}$, each of the functions~$\chi_j(x) + x_j$ become constant in cell interiors. Using this one can show (see for instance~\cites{bblChildress,bblFannjiangPapanicolaou}) that asymptotically, as $A \to \infty$,  the effective diffusion matrix behaves like
\begin{equation}\label{eqnEffectiveDiffusivityAsym}
\bar \sigma(A) = \sigma_0\sqrt{A} I + o(\sqrt{A})
\end{equation}
Here $I$ is the identity matrix, and $\sigma_0 > 0$ is an explicitly computable constant. Consequently, if we consider~\eqref{intro12}, with 
the Dirichlet boundary conditions on the unit square, we expect
\begin{equation}\label{eqnIntroHomog}
\varphi_{\epsilon,A} (x, t) \approx \exp( - \sigma_0 \sqrt{A} t ), \quad\text{as  $t \to \infty$, for small $\epsilon$.}
\end{equation}

On the other hand, if we keep $\epsilon$ fixed and send $A \to \infty$, we know~\cites{FW,bblYoung} that~$\varphi$ becomes constant on stream lines of $H$. In particular, because of the
Dirichlet boundary condition on the outside boundary, we must also have $\varphi = 0$ on the   boundary of all interior cells. Since these cells have side length $\epsilon$, we expect
\begin{equation}\label{eqnIntroSF}
\varphi_{\epsilon,A} (x, t) \approx \exp( - \pi^2 t / \epsilon^2 ), \quad\text{as  $t \to \infty$ for large $A$.}
\end{equation}
Matching~\eqref{eqnIntroHomog} and~\eqref{eqnIntroSF} leads us to believe $\sqrt{A} \approx 1/\epsilon^2$ marks the transition between the two regimes.\medskip

With this explanation, we state our main results. Our first two results study the averaging to homogenization transition for the principal Dirichlet eigenvalue. Let $L$ be an even integer, $D=[-L/2,L/2]\times[-L/2,L/2]$ be a square of side length $L$, and $A > 0$ be given. We study the principal eigenvalue problem on $D$
\begin{equation}\label{eqnEval}
\left\{
\begin{aligned}
-\lap \varphi + A v \cdot \grad \varphi &= \lambda \varphi &&\text{in } D\\
\varphi &= 0 &&\text{on }\del D\\
\varphi &> 0 && \text{in }D,
\end{aligned}\right.
\end{equation}
as both $L, A \to \infty$. We observe two distinct behaviors of $\lambda$ with a sharp transition. If $A \gg L^4$, then the principal eigenvalue stays bounded, and can be read off using the variational principle in~\cite{bblBeresHamelNadirshvilli} in the limit  $A \to \infty$. This is the averaging regime, and exactly explains~\eqref{eqnIntroSF}. On the other hand, if $A \ll L^4$, then the principal eigenvalue is of the order ${\bar \sigma(A)}/{L^2}$. This is the homogenization regime, and when rescaled to a domain of size $1$, exactly explains~\eqref{eqnIntroHomog}. Our precise results are stated below.
\begin{theorem}[The averaging regime]\label{thmEvalStrongFlow}%{{{
Let $\varphi = \varphi_{L,A}$ be the solution of~\eqref{eqnEval} and $\lambda = \lambda_{L,A}$ be the principal eigenvalue. If $A \to \infty$, and $L = L(A)$ varies such that
\begin{equation}\label{eqnStrongFlowAssumption}
\liminf_{A \to \infty} \frac{\sqrt{A}}{L^2 \log A \log L} > 0,
\end{equation}
then there exist two constants $\lambda_0, \lambda_1$, independent of $L$ and $A$, such that
\begin{equation}\label{eqnStrongFlowEvalBounds}
0 < \lambda_0 \leq \lambda_{L,A} \leq \lambda_1 < \infty
\end{equation}
for all $A$ sufficiently large. 
\end{theorem}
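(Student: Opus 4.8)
The plan is to establish the two inequalities in \eqref{eqnStrongFlowEvalBounds} separately, using the variational characterization of the principal eigenvalue for a (non-self-adjoint) operator with incompressible drift, namely
\[
\lambda_{L,A} = \inf_{\psi} \sup_{x \in D} \frac{-\lap \psi + A v \cdot \grad \psi}{\psi},
\]
where the infimum runs over positive smooth $\psi$ vanishing on $\del D$, together with its companion formula from~\cite{bblBeresHamelNadirshvilli}. The divergence-free structure $\divergence v = 0$ is essential: it guarantees that the drift term does not contribute to the $L^2$ energy, so that for the self-adjoint part one has the clean lower bound $\lambda_{L,A} \geq \lambda_1(-\lap; D)$ on any subdomain obtained by restricting test functions. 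This immediately suggests the source of the lower bound $\lambda_0$.

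For the \textbf{lower bound} $\lambda_{L,A} \geq \lambda_0$: I would localize to a single interior cell $Q_0 \subset D$, a unit square whose boundary is a streamline of $H$ (recall $H$ vanishes on the lattice lines $x_i \in \Z$). The eigenfunction $\varphi$ restricted to $Q_0$ is positive, and since $\del Q_0$ is a level set of $H$, the flow $v$ is tangent to $\del Q_0$; this lets me compare $\lambda_{L,A}$ to the principal eigenvalue of $-\lap + Av\cdot\grad$ on the single cell $Q_0$ with Dirichlet conditions — but that cell eigenvalue is bounded below by $\lambda_1(-\lap; Q_0) = 2\pi^2$ by the incompressibility argument above. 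A short argument (restriction of $\varphi$, or a maximum-principle comparison of $\varphi$ with the cell eigenfunction) then gives $\lambda_{L,A} \geq c > 0$ uniformly. This direction does not even need the hypothesis \eqref{eqnStrongFlowAssumption}; it holds for all $L,A$.

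For the \textbf{upper bound} $\lambda_{L,A} \leq \lambda_1$: this is where the hypothesis \eqref{eqnStrongFlowAssumption} — roughly $\sqrt A \gtrsim L^2 \log A \log L$ — enters, and it is the main obstacle. The idea is to construct an explicit supersolution: take $\psi$ to be, on each interior cell, (a regularized version of) the principal Dirichlet eigenfunction of $-\lap$ on that cell, glued together across cell boundaries, with appropriate behavior near $\del D$. Such a $\psi$ is constant (zero) on the cell boundaries, hence tangent level sets of $H$, so $v \cdot \grad \psi$ is small there; but inside the cells $v\cdot\grad\psi$ is large ($O(A)$) and must be dominated. The resolution is the standard cellular-flow boundary-layer picture: the true eigenfunction equilibrates along streamlines, so the right test function is nearly constant on streamlines away from an $O(1/\sqrt A)$ boundary layer near each cell edge. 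I would build $\psi$ as a function of $H$ plus a boundary-layer correction, estimate $-\lap\psi + Av\cdot\grad\psi$, and check that the ratio is $\leq \lambda_1$ provided the boundary layer width $1/\sqrt A$ is small enough relative to the cell geometry and the number of cells $L^2$ — the logarithmic factors in \eqref{eqnStrongFlowAssumption} are exactly the price of summing boundary-layer errors over $\sim L^2$ cells and controlling the matching near $\del D$. The delicate point will be the uniformity of the supersolution estimate across \emph{all} cells simultaneously, including the layer of cells adjacent to $\del D$, and verifying that the construction in~\cite{bblBeresHamelNadirshvilli}'s variational framework yields a genuine (smooth, positive) admissible competitor rather than merely a formal one.
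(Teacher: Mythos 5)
Your proposal has the two directions of the inequality exactly backwards, and this inverts the entire structure of the argument.

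\textbf{The upper bound is the easy one.} In the paper, the upper bound $\lambda_{L,A}\leq\lambda_1$ is obtained by the Berestycki--Hamel--Nadirashvili inequality $\lambda\int_D w^2\leq\int_D\abs{\grad w}^2$, valid for any first integral $w\in H^1_0(D)$ of $v$, applied directly with $w=H$. Since $H$ is $1$-periodic, vanishes on $\del D$ (recall $D$ is a union of cells), and satisfies $v\cdot\grad H=0$, the ratio $\int_D\abs{\grad H}^2/\int_D H^2$ reduces to a single-cell quantity that is independent of both $L$ and $A$. No supersolution construction, no boundary-layer summation, and crucially \emph{no use of the hypothesis~\eqref{eqnStrongFlowAssumption}} is needed. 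Your elaborate supersolution construction for the upper bound is unnecessary.

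\textbf{The lower bound is the hard one, and your proposed argument for it is wrong.} You claim the lower bound $\lambda_{L,A}\geq\lambda_0>0$ is a ``short argument'' that ``does not even need the hypothesis~\eqref{eqnStrongFlowAssumption}.'' This cannot be true: Theorem~\ref{thmEvalHomog} shows that in the homogenization regime $A\ll L^4$ one has $\lambda_{L,A}\asymp\sqrt{A}/L^2\to 0$, so the lower bound is false without the hypothesis. The concrete gap in your reasoning is the step where you ``compare $\lambda_{L,A}$ to the principal eigenvalue of $-\lap+Av\cdot\grad$ on the single cell $Q_0$ with Dirichlet conditions.'' The eigenfunction $\varphi$ is strictly positive in the interior of $D$, hence does \emph{not} vanish on $\del Q_0$, so its restriction to $Q_0$ is not a Dirichlet eigenfunction there, and domain monotonicity gives $\lambda_1(-\lap;Q_0)\geq\lambda_1(-\lap;D)$, the wrong direction. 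The entire content of the theorem is that when $A$ is sufficiently large relative to $L$, the eigenfunction becomes \emph{approximately} zero on the separatrix skeleton, which then forces $\lambda_{L,A}$ up toward the single-cell eigenvalue. Establishing this smallness on separatrices is precisely where the hypothesis~\eqref{eqnStrongFlowAssumption} enters: the paper proves it via a Heinze-type energy estimate (Lemma~\ref{lmaHeinz}), oscillation bounds along streamlines and across cell boundaries (Lemmas~\ref{lmaOscOnSep} and~\ref{lmaOscBetweenCells}, combined in Proposition~\ref{ppnTotalOsc}), a path-averaging min-max lemma (Lemma~\ref{lmaGI}) that beats naive Cauchy--Schwarz and introduces the $\log L$ factor, a mass-localization lemma (Lemma~\ref{lmaGI2}), and a drift-independent $L^p\to L^\infty$ estimate (Lemma~\ref{lmaDriftIndLinfBound}). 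None of this machinery appears in your proposal, and without it the lower bound cannot be established.
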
%}}}
\begin{theorem}[The homogenization regime]\label{thmEvalHomog}%{{{
As with Theorem~\ref{thmEvalStrongFlow}, let $\varphi = \varphi_{L,A}$ be the solution of~\eqref{eqnEval} and $\lambda = \lambda_{L,A}$ be the principal eigenvalue. If $L \to \infty$, and $A = A(L)$ varies such that
\begin{equation}\label{eqnHomogAssumption}
%\lim_{L \to \infty} \frac{\sqrt{A}}{L^2} = 0,
%A \leq L^{4 - \alpha}, \quad\text{for some }\alpha > 0,
\frac{1}{c} L^{4 - \alpha} \leq A \leq c L^{4 - \alpha}, \quad\text{for some }\alpha > 0,
\end{equation}
then there exists a constant $C = C(\alpha, c) > 0$, independent of $L$ and  $A$, such that
\begin{equation}\label{eqnHomogEvalBounds}
\frac{1}{C} \frac{\sqrt{A}}{L^2} \leq \lambda_{L, A} \leq C \frac{\sqrt{A}}{L^2}
\end{equation}
for all $L$ sufficiently large.
\end{theorem}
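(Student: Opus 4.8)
The plan is to establish matching upper and lower bounds for $\lambda_{L,A}$ by constructing explicit sub- and super-solutions built out of the effective (homogenized) problem, with the corrector estimates providing the error control. Rescaling to the unit square $D_1 = [-1/2,1/2]^2$ via $y = x/L$ turns \eqref{eqnEval} into $-\lap_y \psi + AL\, v(Ly)\cdot\grad_y\psi = L^2 \lambda \psi$, so that $\eps = 1/L$ plays the role of the homogenization parameter and the effective operator is $-\grad\cdot(\bar\sigma(A)\grad\cdot)$ with principal Dirichlet eigenvalue $\mu(A) = \bar\sigma(A)\pi^2$ on $D_1$ (since $\bar\sigma(A) \approx \sigma_0\sqrt A\, I$ is a scalar multiple of the identity by the symmetry of the cellular flow). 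Under the scaling regime \eqref{eqnHomogAssumption}, $A \approx L^{4-\alpha} \ll L^4$, so the boundary layer width $1/\sqrt A$ is much larger than the cell size $1/L$ relative... more precisely $\sqrt A/L^2 \approx L^{-\alpha/2} \to 0$, so $L^2\lambda \approx \sqrt A \to \infty$ but $\sqrt A / L^2 \to 0$: the target is $\lambda_{L,A} \approx \sqrt A/L^2$.

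For the upper bound, I would use the variational characterization $\lambda_{L,A} = \inf_{\phi} \int_D |\grad\phi|^2 / \int_D \phi^2$ over $H^1_0(D)$ (the drift term integrates to zero against $\phi\grad\phi$ since $v$ is divergence-free). The natural trial function is a homogenization ansatz: take $\bar\varphi(x) = \cos(\pi x_1/L)\cos(\pi x_2/L)$, the principal eigenfunction of the rescaled Laplacian, and correct it to second order, $\phi = \bar\varphi + \tfrac{1}{\text{(scaling)}}\chi_j^A(x)\,\del_j\bar\varphi + (\text{second corrector terms})$, cut off near $\del D$. Plugging into the Rayleigh quotient, the leading term gives $\bar\sigma(A)\pi^2/L^2$, and one needs the corrector contributions and the boundary-layer cutoff error to be lower order — this is exactly where \emph{drift-independent $L^p\to L^\infty$ estimates} for the second correctors enter, and the fact that the slow profile $\bar\varphi$ restricted to scales of a single cell is essentially quadratic is what the excerpt flags as the key simplification. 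The hypothesis $A \ll L^4$ (i.e. $\alpha > 0$) is precisely what makes these error terms subleading.

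For the lower bound, I would produce a positive super-solution of $-\lap\varphi + Av\cdot\grad\varphi \geq \tilde\lambda\varphi$ with $\tilde\lambda = c'\sqrt A/L^2$, vanishing on $\del D$, which by the maximum principle forces $\lambda_{L,A} \geq \tilde\lambda$; alternatively, dualize and run the same homogenization argument on the adjoint equation (the flow $-v$), noting $\bar\sigma$ is unchanged. Again one takes $\bar\varphi = \cos(\pi x_1/L)\cos(\pi x_2/L)$ plus correctors, but now arranges the correction so that the corrected function is a genuine super-solution up to the boundary; here one pays careful attention to signs and to the boundary layer, shrinking the domain slightly (to $[-L/2+O(\sqrt{A}/\text{something}),\dots]$, i.e. removing a layer matched to the corrector scale) so that the defect has a favorable sign. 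The two bounds together give \eqref{eqnHomogEvalBounds} with $C = C(\alpha,c)$, the $\alpha$-dependence entering through the threshold at which the corrector errors are dominated.

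The main obstacle is controlling the second correctors $\chi^A_{ij}$ — the solutions of $-\lap\chi_{ij}^A + Av\cdot\grad\chi_{ij}^A = \partial_i\chi_j^A + \bar\sigma_{ij} - v_i\chi_j^A$ (or the analogous formula) — uniformly in $A$ as $A\to\infty$. Standard energy estimates lose powers of $A$ from the large drift; the insight the abstract advertises is that, because the relevant slow profile is quadratic (its third derivatives vanish), the problematic term drops out and one can instead invoke elliptic regularity bounds of the form $\|u\|_{L^\infty} \lesssim \|f\|_{L^p}$ that hold with a constant independent of the incompressible drift $Av$. Verifying that this genuinely yields $o(\sqrt A)$-sized correctors after the rescaling, and that the boundary-layer cutoffs do not reintroduce $A$-dependence, is the crux; everything else is assembling the Rayleigh quotient / maximum principle argument around it.
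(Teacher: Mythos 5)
Your proposal has a fundamental error in the upper bound. The operator $-\lap + Av\cdot\grad$ is \emph{not} self-adjoint, and the principal Dirichlet eigenvalue is \emph{not} given by the Rayleigh quotient $\inf_{\phi\in H^1_0}\int|\grad\phi|^2/\int\phi^2$. What is true is the one-sided statement: because $v$ is divergence-free and the eigenfunction $\varphi$ vanishes on $\del D$, the drift contribution to the quadratic form cancels \emph{at the eigenfunction}, giving the identity $\lambda = \int|\grad\varphi|^2/\int\varphi^2 \geq \lambda_1(-\lap)$. But the infimum over all test functions is exactly $\lambda_1(-\lap) \approx \pi^2/L^2$, which is independent of $A$ and is a strict lower bound (not the value) when $A>0$. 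In the regime \eqref{eqnHomogAssumption} you need $\lambda \approx \sqrt{A}/L^2 \gg 1/L^2$, so the Rayleigh quotient is off by an unbounded factor and cannot produce an upper bound at all; a trial function cannot help you, because the quotient is minimized, not maximized, at the eigenfunction. The paper instead bounds the eigenvalue from above by a comparison-principle/Perron argument: it builds an explicit positive sub-solution $\psi$ (Lemma~\ref{lmaPsi}) solving a Poisson problem with $\psi\gtrsim 1/\sqrt{A}$ in the interior of a disk, and shows that if $\mu_1$ exceeded $\sqrt{A}/c$ one could squeeze in a genuine eigenfunction between $\psi$ and $\varphi_1$, a contradiction.

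A second, related issue: the paper's whole point about the quadratic slow profile is that it be \emph{exactly} a quadratic polynomial, so that the second-corrector equation \eqref{eqnVarphi2primebis} has a right-hand side independent of the slow variable. Your cosine profile $\cos(\pi x_1/L)\cos(\pi x_2/L)$ is not quadratic, and its second derivatives are not constant; the troublesome $A\,v\cdot\grad_x\tau_{11}$ term would then re-acquire a slow-variable dependence that cannot be absorbed, and the expansion breaks down. Because no eigenfunction of the homogenized operator on the square (or disk) is quadratic, the paper does \emph{not} carry out a multi-scale expansion of an eigenfunction ansatz at all. It runs the expansion for the \emph{exit time} on a disk, whose homogenized profile $(L^2-|x|^2)/2$ is exactly quadratic (Proposition~\ref{prop-disk} and Lemma~\ref{lmaSupSolbis}), and then transfers both the lower bound (via $\lambda\geq 1/\norm{\tau}_{L^\infty}$) and the upper bound (via the Poisson problem $\psi$ and annulus lemmas) to the eigenvalue, again by maximum-principle / Perron arguments rather than by any variational formula. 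Your lower-bound sketch via a positive super-solution $-\lap\phi + Av\cdot\grad\phi\geq\tilde\lambda\phi$ is the right kind of argument (it is essentially the Berestycki--Nirenberg--Varadhan characterization), but without the detour through the exactly quadratic exit-time profile on a disk you cannot control the second corrector uniformly in $A$, which is the crux you yourself correctly identify.
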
%}}}
\begin{remark*}%{{{
In the special case where $A = L^\beta$, for $\beta > 4$, assumption~\eqref{eqnStrongFlowAssumption} is satisfied, and consequently the principal eigenvalue remains bounded and non-zero. For $\beta < 4$, assumption~\eqref{eqnHomogAssumption} is satisfied and the principal eigenvalue behaves like that of the homogenized equation.
\end{remark*}%}}}

In the averaging regime (Theorem~\ref{thmEvalStrongFlow}), the proof of the upper bound in \eqref{eqnStrongFlowEvalBounds} follows directly using ideas of~\cite{bblBeresHamelNadirshvilli}. The lower bound, however, is much more intricate. The main idea is to control the oscillation of $\varphi$ between neighbouring cells, and use this to show that the effect of the cold boundary propagates inward along separatrices, all the way to the center cell. The techniques used are similar to~\cites{bblFannjiangKiselevRyzhik,bblKiselevRyzhik}. The main new (and non-trivial)
difficulty in our situation is that the number of cells also increases with the amplitude. This requires us to estimate the oscillation of $\varphi$ between cells in terms of energies localised to each cell (Proposition~\ref{ppnTotalOsc}, below). Here the assumption that $L$ is not too large comes into play. Finally, the key idea in the proof is to use a min-max argument (Lemma~\ref{lmaGI}, below) to show that $\varphi$ is small on the boundaries of all cells.

Moreover, once smallness on separatrices is established, our proof may be modified to show that under a stronger assumption 
\begin{equation}\label{eqnStrongFlowAssumption-bis}
\liminf_{A \to \infty} \frac{\sqrt{A}}{L^2 \log A \log L} =+\infty,
\end{equation}
we have a precise asymptotics
\begin{equation}\label{070802}
\lim_{A\to\infty}\lambda_{L,A} = 
\inf\left\{ \int_{Q} \abs{\grad w}^2 \;\middle|\; w \in H^1_0(Q), \int_{Q} w^2 = 1, \text{ and } w \cdot \grad v = 0 \right\},
\end{equation}
where $Q$ is a single cell. This is the same as the variational principle in~\cite{bblBeresHamelNadirshvilli}. We remark however that~\cite{bblBeresHamelNadirshvilli} only gives~\eqref{070802} for fixed $L$ as $A \to \infty$.\smallskip

Turning to the homogenization regime (Theorem~\ref{thmEvalHomog}), we remark first that homogenization of eigenvalues has not been as extensively studied as other homogenization problems. This is possibly because eigenvalues involve the infinite time horizon.  We refer to~\cites{AlCap,Kes1,Kes2,SV1,SV2} that all study self-adjoint problems for some results on the homogenization of the eigenvalues in oscillatory periodic media. The extra difficulties in the present paper come both from two sources. First, since the problem is not self adjoint, a variational principle for the eigenvalue is not available. Second, as $A$ and $L$ tend to $\infty$, we don't have suitable aprori bounds because either the domain is not compact, or the effective diffusivity is unbounded.

Our proof uses a multi-scale expansion to construct appropriate sub and super solutions. When $A$ is fixed, it usually suffices to consider a multi-scale expansion to the first corrector. However, in our situation, this is not enough, and we are forced to consider a multi-scale expansion up to the second corrector.

Of course an asymptotic profile, and explicit bounds are readily available~\cite{bblFannjiangPapanicolaou} for the first corrector. However, to the best of our knowledge, bounds on the second corrector as $A \to \infty$  have not been studied. There are two main problems to obtaining these bounds. The first problem is appearance of that terms involving the slow gradient of the second corrector multiplied by $A$. In general, we have no way of bounding these terms. Luckily, if we choose our slow profile to be quadratic, then these terms idnetically vanish and present no problem at all!

The second problem with obtaining bounds on the second corrector is that it satisfies an equation where the first order terms depend on $A$. So one would expect the bounds to also depend on $A$, which would be catastrophic in our situation. However, for elliptic equations with a \textit{divergence free} drift, we have apriori $L^p \to L^\infty$ estimates which are \textit{independent} of the drift~\cites{bblBerestyckiKiselevNovikovRyzhik,bblFannjiangKiselevRyzhik}. This, combined with an explicit knowledge of the first corrector, allows us to obtain bounds on the second corrector that decay when $A \ll L^4$.

The sub and super solutions we construct for eigenvalue problem are done through the expected exit time. Since these are interesting in their own right, we describe them below. Let $\tau = \tau_{L, A}$ be the solution of
\begin{equation}\label{eqnExitTimeProblem}
\left\{
\begin{gathered}
- \lap \tau + A v \cdot \grad \tau = 1 \quad \text{in } D\\
\tau = 0 \quad \text{on } \del D,
\end{gathered}\right.
\end{equation}
where $v$ and $D$ are as in \eqref{eqnEval}. Though we do not use any probabilistic arguments in this paper, it is useful to point out the connection between $\tau$ and diffusions. Let $X$ be the diffusion
\begin{equation}\label{071502}
dX_t = -Av(X_t) \, dt + \sqrt{2} dW_t
\end{equation}
where $W$ is a standard $2$-dimensional Brownian motion. It is well known that $\tau$ is the expected exit time of the diffusion $X$ from the domain $D$. Numerical simulations of three realizations of $X$ are shown in Figure~\ref{figdiff}. Note that for ``small'' amplitude ($A=L^3$), trajectories of $X$ behave similarly to those of the Brownian motion. For a ``large'' amplitude ($A=L^{4.5}$), trajectories of $X$ tend to move ballistically along the skeleton of the separatrices.
\begin{figure}[htb]
    \centering
    \subfigure[Small amplitude ($A = L^3$)]{\includegraphics[width=5cm]{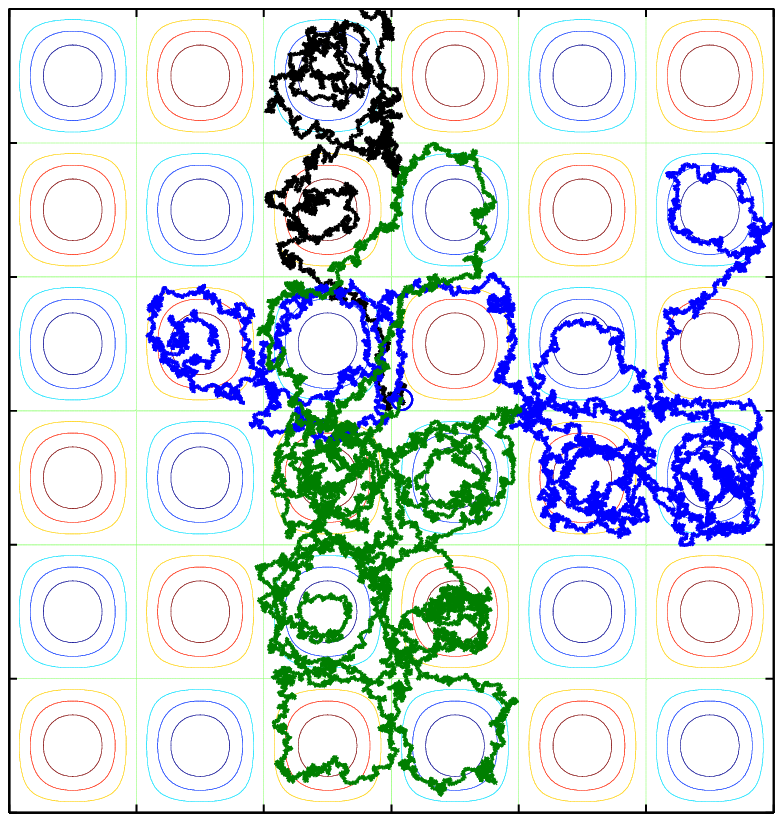}}
    \qquad
    \subfigure[Large amplitude ($A = L^{4.5}$)]{\includegraphics[width=5cm]{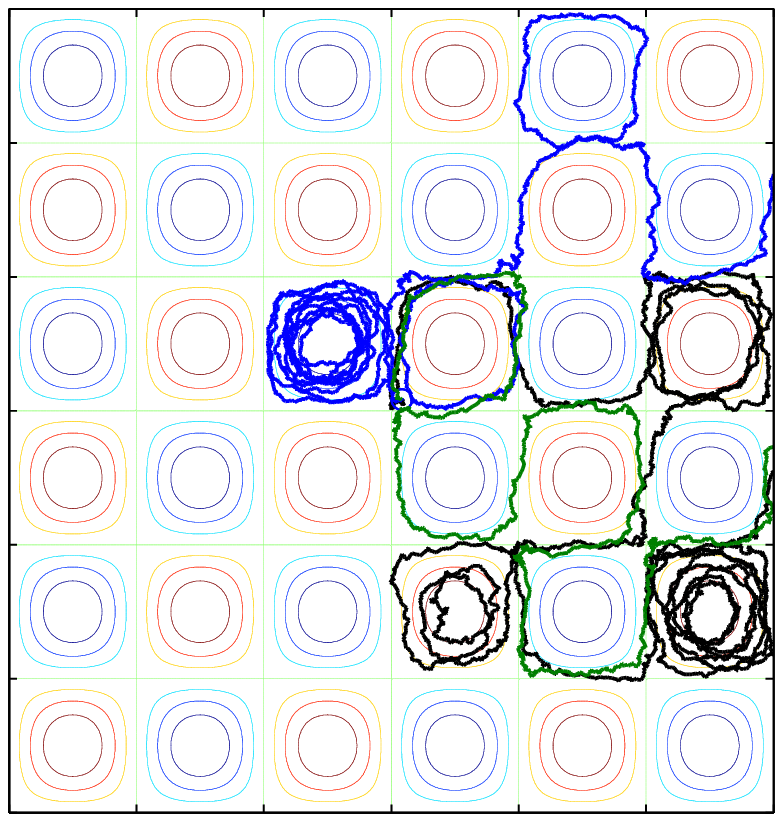}}
\caption{Trajectories of three realizations of the diffusion \eqref{071502}.}
\label{figdiff}
\end{figure}
\begin{figure}[htb]
    \centering
    \subfigure[Small amplitude ($A = L^3$)]{\includegraphics[height=5cm]{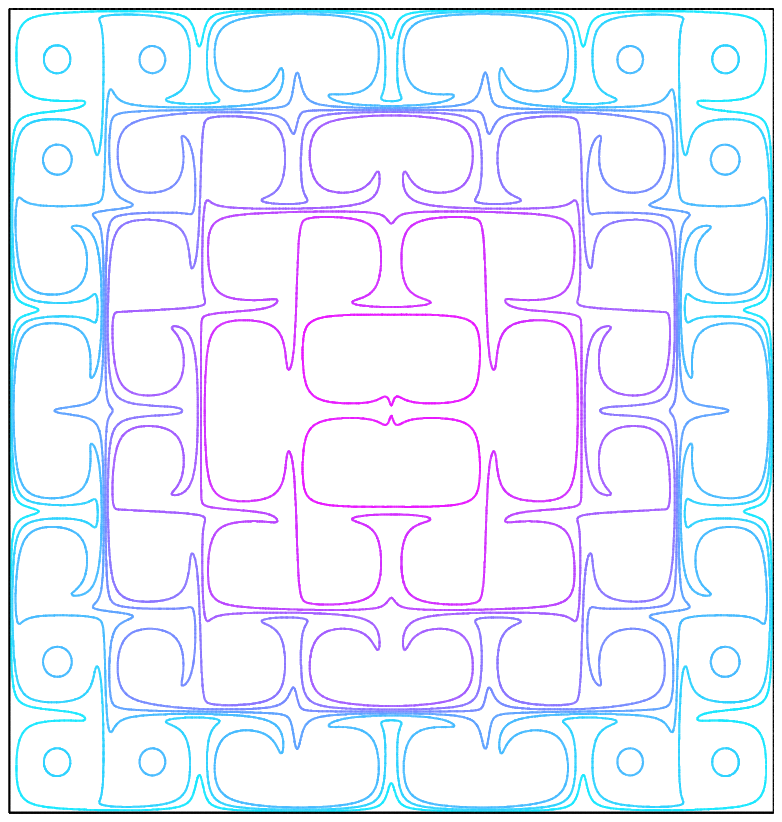}}
    \qquad
    \subfigure[Large amplitude ($A = L^5$)]{\includegraphics[height=5cm]{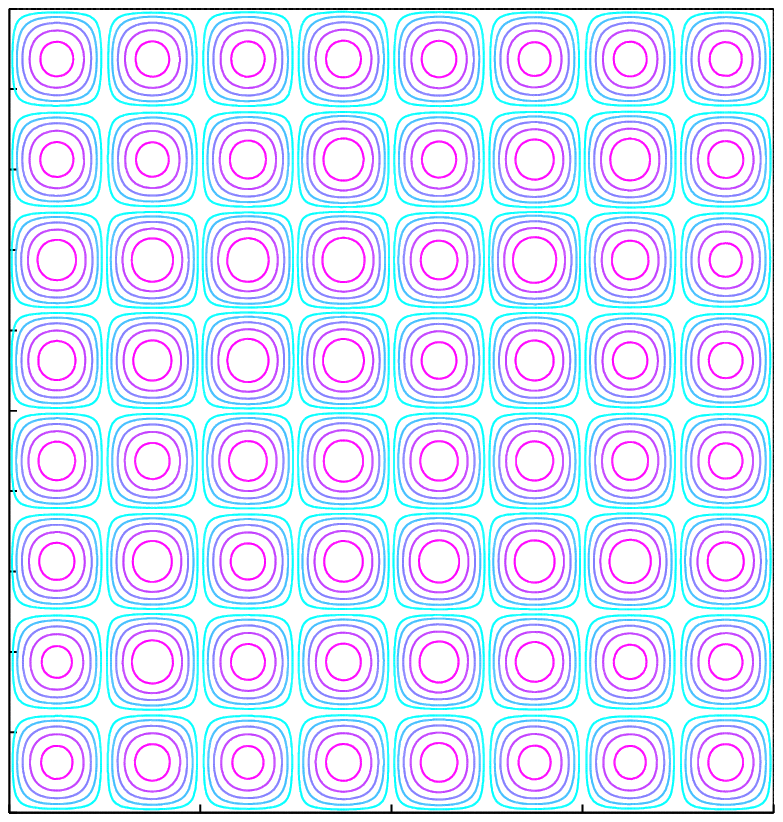}}
    \caption{A contour plot of $\tau(x,y)$.}
    \label{fgrtau}
\end{figure}

Similar to the eigenvalue problem, the behaviour of $\tau$ is described by two distinct regimes with a sharp transition. If $A \gg L^4$, then the stirring is strong enough to force the diffusion $X$ to exit $D$ almost immediately along separatrices. In this case, we show that $\tau \to 0$ on separatrices, and is bounded everywhere else above by a constant independent of $A$ and $L$. On the other hand, if $A \ll L^4$, then the stirring is not strong enough for the effect of the cold boundary to be felt in the interior. In this case, it takes the diffusion $X$ a very long time to exit from $D$, and $\tau \to \infty$ as $A, L \to \infty$. A numerical simulation showing $\tau$ in each of these regimes is shown in Figure~\ref{fgrtau}. The precise results are as follows.

\begin{theorem}[The averaging regime]\label{ppnExitTimeStrongFlow}  %{{{
Let $\tau = \tau_{L, A}$ be the solution to~\eqref{eqnExitTimeProblem}. Let $A \to \infty$, and suppose $L = L(A)$ varies such that~\eqref{eqnStrongFlowAssumption} is satisfied. There exists a constant $C$, independent of $A$, $L$, such that for all $A$ sufficiently large
$$
%\tau(x) \leq C \frac{L^2}{\sqrt{A}} \log A \log L, \quad\text{whenever } H(x) = 0.
\tau(x)^2 \leq C \frac{L^2}{\sqrt{A}} \log A \log L, \quad\text{whenever } H(x) = 0.
$$
Consequently, if $H(x) = 0$, then $\tau (x) \to 0$ as $A \to \infty$, and $\norm{\tau}_{L^\infty(D)}$ is bounded uniformly in $A$.
\end{theorem}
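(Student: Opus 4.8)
The plan is to bound $\tau$ directly on the separatrix skeleton $\{H=0\}$, and then deduce the global $\linf$ bound by a one-cell argument. Write $\{H=0\}$ as a family of nested square separatrix loops $S_1\subset S_2\subset\dots\subset S_{L/2}=\del D$, where $S_k$ encloses roughly $k^2$ cells, so that $\tau\equiv 0$ on the outermost loop $S_{L/2}$. Since $\tau$ is largest near the centre, it suffices to show that $\max_{S_k}\tau$ is small for every $k$, with the bound on the innermost loops being the binding one. The mechanism is that the cold boundary $\del D$ propagates inward through the network of boundary-layer channels running along $\{H=0\}$, and because the flow is strong these channels have very small ``resistance.''

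The first ingredient is the energy identity. Multiplying \eqref{eqnExitTimeProblem} by $\tau$ and integrating over $D$, the drift term vanishes because $\grad\cdot v=0$ and $\tau=0$ on $\del D$, giving $\int_D\abs{\grad\tau}^2=\int_D\tau$; combined with the Poincar\'e inequality on the square $D$ of side $L$ this yields $\norm{\grad\tau}_{L^2(D)}^2\lesssim L^4$ and $\norm{\tau}_{L^2(D)}\lesssim L^3$, and the drift-independent $L^p\to\linf$ estimate for elliptic equations with incompressible drift (\cites{bblBerestyckiKiselevNovikovRyzhik,bblFannjiangKiselevRyzhik}) then provides a crude a priori pointwise bound. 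We also use a localized form of this identity: integrating the equation over the region enclosed by a streamline $\{H=h\}$ inside one cell and using $v\cdot n=0$ on streamlines, the net diffusive flux across $\{H=h\}$ equals the area it encloses --- the quantitative version of the statement that heat generated inside a region must escape through the channels bounding it.

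The heart of the argument is the oscillation estimate, Proposition~\ref{ppnTotalOsc} applied to $\tau$: for each cell $Q$, the oscillation of $\tau$ around the separatrix loop $\del Q$ is small --- of order $(\log A)/\sqrt A$ times an energy localized to $Q$ and its neighbours. This comes from a boundary-layer analysis: in the channel of width $\sim A^{-1/2}$ straddling $\del Q$ the advection term is of order $A$ and equilibrates $\tau$ along the channel, so the residual oscillation is the flux carried by the channel times its resistance, which is what produces the crucial factor $1/\sqrt A$; the logarithmic loss comes from the stagnation points of the flow (the saddle points of $H$ at the grid vertices), where streamlines slow down, exactly as in the classical computation of $\bar\sigma(A)$. \textbf{I expect this step to be the main obstacle}, for the reason emphasized in the introduction: unlike the fixed-number-of-cells setting of \cites{bblFannjiangKiselevRyzhik,bblKiselevRyzhik}, here there are $\sim L^2$ cells, so the estimate must be phrased in terms of energies localized to individual cells (or annular bands of cells) rather than the global energy, and these localized pieces must then be summed along a path of $\sim L$ cells without losing too much. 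It is precisely in controlling this sum that the hypothesis \eqref{eqnStrongFlowAssumption}, which keeps $L$ from being too large relative to $\sqrt A$, is used.

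Finally, propagate inward. Since consecutive loops $S_k$ and $S_{k+1}$ share separatrix segments, $\abs{\max_{S_k}\tau-\max_{S_{k+1}}\tau}$ is controlled by the oscillations around the cells lying between them; a min--max comparison (Lemma~\ref{lmaGI}) is used to pass from the channel-averaged quantities back to the genuine pointwise maxima on the loops. Summing the telescoping estimate from $k=L/2$ (where $\tau=0$) down to $k=1$, and applying Cauchy--Schwarz over the $\sim L$ steps together with the localized energy bounds and the global bound $\int_D\abs{\grad\tau}^2\lesssim L^4$, yields a bound of the form $\max_{S_k}\tau\lesssim\bigl(L^2\log A\,\log L/\sqrt A\bigr)^{1/2}$ for every $k$, which is the claimed inequality. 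Under \eqref{eqnStrongFlowAssumption} the right-hand side is bounded, and it tends to $0$ whenever $\sqrt A/(L^2\log A\log L)\to\infty$ (in particular for fixed $L$, or for $A=L^\beta$ with $\beta>4$), so $\tau\to 0$ on $\{H=0\}$. For the uniform $\linf$ bound, observe that inside any cell $Q$, $\tau$ equals its boundary value on $\del Q$ --- now at most $C$ --- plus the solution of the same equation in $Q$ with zero boundary data; the latter is the exit time from a single cell, bounded uniformly in $A$ because the principal eigenvalue of a single cell stays bounded (the variational principle of~\cite{bblBeresHamelNadirshvilli}). Hence $\norm{\tau}_{\linf(D)}\le C$ uniformly in $A$.
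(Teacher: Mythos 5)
Your outline correctly identifies the oscillation estimate (Proposition~\ref{ppnTotalOsc} and its packaging into Lemma~\ref{lmaGI}) as the engine of the argument, and your one--cell decomposition at the very end (exit time from a single cell plus a comparison function with the boundary value of $\tau$ on $\del Q$) is exactly the paper's closing step. But there is a genuine gap in the middle, and it is a factor of $L^2$.

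The oscillation machinery, whether you package it as Cauchy--Schwarz over $L$ steps or as the sharper min--max of Lemma~\ref{lmaGI}, gives at best a bound of the shape
\[
\sup_{\Gamma_i}\tau^2\;\lesssim\;\frac{\log A\,\log L}{\sqrt A}\sum_{\text{all cells}}\alpha_i
\;=\;\frac{\log A\,\log L}{\sqrt A}\int_D\abs{\grad\tau}^2
\;=\;\frac{\log A\,\log L}{\sqrt A}\int_D\tau.
\]
Into this you feed the a priori energy bound. The Poincar\'e bound you propose gives only $\int_D\tau=\int_D\abs{\grad\tau}^2\lesssim L^4$ (equivalently $\norm{\tau}_{L^2}\lesssim L^3$, or $\norm{\tau}_{\linf}\lesssim L^2$ from the drift-independent estimate applied to the whole domain $D$ of side $L$). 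Substituting $L^4$ produces $\sup\tau^2\lesssim L^4\log A\log L/\sqrt A$, which is $L^2$ larger than the claimed bound; under \eqref{eqnStrongFlowAssumption} this stays of order $L^2$ rather than order $1$, and your telescoping/Cauchy--Schwarz variant over $\sim L$ annular rings is in fact even worse (it produces $L^5$). The arithmetic in your ``propagate inward'' paragraph simply does not close to $(L^2\log A\log L/\sqrt A)^{1/2}$ as asserted.

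What you are missing is a \emph{bootstrap} that upgrades $\int_D\tau$ from $O(L^4)$ to $O(L^2)$. The mechanism is the very one--cell decomposition you only use at the end: by the drift-independent exit-time bound (Lemma~\ref{lmaDriftIndTauBound}) applied to each cell interior $Q_i'$, one has $\int_{Q_i'}\tau\leq C\bigl(1+\norm{\tau}_{\linf(\Gamma_i)}\bigr)$, since the local exit time from a single $O(1)$ cell is $O(1)$ uniformly in $A$. Summing over the $\sim L^2$ cells (and absorbing the skeleton mass by the analogue of Lemma~\ref{lmaGI2}) gives
\[
\int_D\tau\;\leq\;C L^2+C L^2\,\frac{(\log A\,\log L)^{1/2}}{A^{1/4}}\left(\int_D\tau\right)^{1/2},
\]
where the second term comes from re-inserting the oscillation bound $\norm{\tau}_{\linf(\Gamma_i)}^2\lesssim(\log A\,\log L/\sqrt A)\int_D\tau$. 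Solving this self-referential inequality, and using \eqref{eqnStrongFlowAssumption}, yields $\int_D\tau\leq CL^2$, and only then does the oscillation estimate deliver $\sup_{\Gamma_i}\tau^2\lesssim L^2\log A\log L/\sqrt A$. Without this absorption step the proof does not go through; the crude Poincar\'e bound would force the far stronger hypothesis $\sqrt A\gg L^4\log A\log L$ rather than $\sqrt A\gtrsim L^2\log A\log L$, i.e.\ it would miss the claimed transition threshold by exactly $L^2$.

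Two smaller remarks. First, the bound established by the oscillation estimate is initially on the level sets $\Gamma_i$ (where $\abs{H}\in(\delta_0,2\delta_0)$), not on $\{H=0\}$ itself; passing to the fattened skeleton $S=D-\bigcup Q_i'$ requires one more application of the drift-independent bound on $S$, using that $\abs{S}\lesssim L^2/\sqrt A$ stays bounded under \eqref{eqnStrongFlowAssumption}. Second, your remark that $\tau\to0$ on $\{H=0\}$ only ``whenever $\sqrt A/(L^2\log A\log L)\to\infty$'' is a fair observation about the theorem's wording, but it is beside the point of the proof: the quantitative estimate is what the argument delivers, and it is there that the gap lies.
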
%}}}
\begin{theorem}[The homogenization regime] \label{ppnExitTimeHomog}%{{{
As with Theorem~\ref{ppnExitTimeStrongFlow}, let $\tau = \tau_{L,A}$ be the solution of~\eqref{eqnExitTimeProblem} on the square $D=[-L/2,L/2]\times[-L/2,L/2]$. Suppose now $L \to \infty$, and $A = A(L)$ varies such that~\eqref{eqnHomogAssumption} is satisfied, for some fixed $\alpha \in( 0,4)$. Then, for any $\delta > 0$, there exists a constant $C = C(\delta, \alpha, c) > 0$, independent of $A$, $L$, such that
\begin{equation}\label{071104}
C^{-1} \frac{L^2}{\sqrt{A}}\leq\tau(x) \leq C \frac{L^2}{\sqrt{A}},\quad \text{whenever } \abs*{x } \leq (1 - \delta) \frac{L}{2}
\end{equation}
for all $L$ sufficiently large. Consequently, $\tau \to \infty$ as $L \to \infty$, uniformly on compact sets.
\end{theorem}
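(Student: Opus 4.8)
The plan is to prove \eqref{071104} by constructing explicit sub- and super-solutions of \eqref{eqnExitTimeProblem} from a two-scale expansion of the homogenized profile carried out to the \emph{second} corrector, and then comparing via the maximum principle for $\mathcal L\defeq-\lap+Av\cdot\grad$. Write $\chi_j=\chi^A_j$ for the first correctors solving \eqref{intro16}. I will use the asymptotic description of the cellular correctors from \cite{bblFannjiangPapanicolaou}: $\norm{\chi_j}_{\linf(Q)}\leq C$ uniformly in $A$, and $\norm{\grad\chi_j}_{L^q(Q)}\leq C_qA^{(q-1)/(2q)}$ for each finite $q$, because the gradient has size $\sqrt A$ on an $A^{-1/2}$-neighbourhood of the separatrices and is $O(1)$ elsewhere; in particular $\norm{\grad\chi_j\cdot\grad\chi_k}_{L^p(Q)}\leq C_pA^{1-1/(2p)}$.

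For a \emph{quadratic} polynomial $\Phi$ I would set
$$
\psi\defeq\Phi+\sum_j\chi_j\,\del_j\Phi+\sum_{j,k}\theta_{jk}\,\del_{jk}\Phi,
$$
where $\theta_{jk}=\theta^A_{jk}$ is the mean-zero, cell-periodic solution of $-\lap\theta_{jk}+Av\cdot\grad\theta_{jk}=-S_{jk}+\langle S_{jk}\rangle$, with $S_{jk}\defeq-(\del_k\chi_j+\del_j\chi_k)+\tfrac{A}{2}(\chi_jv_k+\chi_kv_j)$. A direct computation shows that the terms proportional to $\grad\Phi$ cancel by \eqref{intro16}, the terms proportional to $\del^2\Phi$ are removed by the definition of $\theta_{jk}$, and every leftover error term carries a third derivative of $\Phi$ --- in particular the term $A\theta_{jk}\,v\cdot\grad(\del_{jk}\Phi)$, whose presence would otherwise force a \emph{third} corrector and could not be controlled. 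Since $\Phi$ is quadratic these all vanish, and one is left with the exact identity $\mathcal L\psi=-\sum_{j,k}\bar\sigma_{jk}(A)\,\del_{jk}\Phi$, $\bar\sigma(A)$ being the effective diffusivity \eqref{intro10}; here one uses $\langle S_{jk}\rangle=-\langle\grad\chi_j\cdot\grad\chi_k\rangle=-(\bar\sigma_{jk}(A)-\delta_{jk})$, obtained by multiplying \eqref{intro16} by $\chi_k$, integrating over $Q$, symmetrizing in $j,k$, and using $\divergence v=0$.

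The delicate step --- and the one I expect to be the main obstacle --- is bounding $\theta_{jk}$, since its source $\tfrac{A}{2}(\chi_jv_k+\chi_kv_j)$ is $O(A)$ in every $L^p$, so a naive estimate yields the useless $\norm{\theta_{jk}}_{\linf}=O(A)$. The remedy is to use \eqref{intro16} again, in the form $Av_k=\lap\chi_k-Av\cdot\grad\chi_k$, to rewrite $\tfrac{A}{2}(\chi_jv_k+\chi_kv_j)=\tfrac{1}{2}\lap(\chi_j\chi_k)-\grad\chi_j\cdot\grad\chi_k-\tfrac{A}{2}\,v\cdot\grad(\chi_j\chi_k)$; then the substitution $\theta_{jk}=\tfrac{1}{2}\chi_j\chi_k+\tilde\theta_{jk}$ pushes the dangerous $O(A)$ piece into the divergence-free drift, leaving
$$
-\lap\tilde\theta_{jk}+Av\cdot\grad\tilde\theta_{jk}=(\del_k\chi_j+\del_j\chi_k)+\grad\chi_j\cdot\grad\chi_k+\langle S_{jk}\rangle,
$$
a mean-zero right-hand side with $L^p$ norm $\leq C_pA^{1-1/(2p)}$. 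I would then invoke the drift-\emph{independent} $L^p\to\linf$ estimate for elliptic equations with an incompressible drift from \cites{bblBerestyckiKiselevNovikovRyzhik,bblFannjiangKiselevRyzhik} to get $\norm{\tilde\theta_{jk}}_{\linf}\leq C_pA^{1-1/(2p)}$, hence, taking $p$ near $1$, $\norm{\theta^A_{jk}}_{\linf(Q)}\leq C_\eta A^{1/2+\eta}$ for every $\eta>0$. The factor $\sqrt A$ gained over the naive bound is exactly what is needed: after multiplication by $\del_{jk}\Phi\sim A^{-1/2}$ the second-corrector correction is $O(A^\eta)$, which is $o(L^2/\sqrt A)$ precisely when $A\ll L^4$.

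It remains to run the comparison. Take $\psi^-$ as above with $\Phi^-(x)=\tfrac{a}{\sqrt A}\bigl(R^2-\abs{x}^2\bigr)$, $R=(1-\tfrac{\delta}{2})\tfrac{L}{2}$ and $a$ small, and $\psi^+$ with $\Phi^+(x)=\tfrac{b}{\sqrt A}\bigl(L^2-\abs{x}^2\bigr)$ and $b$ large. By the identity above and \eqref{eqnEffectiveDiffusivityAsym}, $\mathcal L\psi^-=4\sigma_0a+o(1)\leq1\leq4\sigma_0b(1+o(1))=\mathcal L\psi^+$ for $A$ large, where $\sigma_0$ is the constant of \eqref{eqnEffectiveDiffusivityAsym}. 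Since $\abs{\del_j\Phi^\pm}\leq CL/\sqrt A$ and $\abs{\del_{jk}\Phi^\pm}\leq C/\sqrt A$ on $D$, the corrector corrections obey $\abs{\chi_j\del_j\Phi^\pm}\leq CL/\sqrt A$ and $\abs{\theta_{jk}\del_{jk}\Phi^\pm}\leq C_\eta A^\eta$, and \eqref{eqnHomogAssumption} (which gives $A\leq cL^{4-\alpha}$) makes both $o(L^2/\sqrt A)$ uniformly on $D$ once $\eta<\alpha/(2(4-\alpha))$. Hence $\psi^-\leq0\leq\psi^+$ on $\del D$, while on $\{\abs{x}\leq(1-\delta)L/2\}$ one has $c_\delta L^2/\sqrt A\leq\psi^-$ and $\psi^+\leq CL^2/\sqrt A$. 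The maximum principle for $\mathcal L$ applied to $\psi^--\tau$ and to $\tau-\psi^+$ gives $\psi^-\leq\tau\leq\psi^+$ on $D$, which is \eqref{071104}; since $L^2/\sqrt A\asymp L^{\alpha/2}\to\infty$, the final assertion follows. The same functions, rescaled to the unit square, also underlie the eigenvalue estimates of Theorem~\ref{thmEvalHomog}.
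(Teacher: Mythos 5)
Your proposal is correct, and it takes a genuinely different route through the two main technical steps, so let me compare.

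\emph{The slow profile and the comparison.} The paper first establishes Proposition~\ref{prop-disk} on a disk, where the quadratic $\tau_{10}(x)=\frac12(1-|x|^2)$ is an exact solution of the homogenized problem (so the two-scale ansatz solves the PDE \emph{exactly} modulo boundary data), and then deduces Theorem~\ref{ppnExitTimeHomog} by inscribing and circumscribing disks in the square. You instead work directly on the square, taking two different quadratics $\Phi^\pm$ with $\Phi^-\leq0\leq\Phi^+$ on $\del D$ and using that $\mathcal L\psi=-\sum_{j,k}\bar\sigma_{jk}(A)\,\del_{jk}\Phi$ is a computable constant; the comparison principle then gives the two-sided bound \eqref{071104} in one step. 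This avoids the disk intermediary, at the cost of not producing the sharper asymptotic of Proposition~\ref{prop-disk}. Your identity $\mathcal L\psi=-\sum\bar\sigma_{jk}\del_{jk}\Phi$ is the same mechanism as the paper's \eqref{eqnSuperSolbis}: the point, in both cases, is that the quadratic profile kills every term that would otherwise carry a slow derivative multiplied by $A$.

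\emph{Bounding the second corrector.} This is where the approaches diverge most. The paper splits $\tau_{12}=\eta+\psi_1+\psi_2$ and leans on the explicit boundary-layer profile $\chi_i\approx\tfrac12\sign y_i-y_i$, introducing the auxiliary functions $\xi_i$ and the piecewise primitive $f_i$ to absorb the $O(A)$ source. You instead write $\theta_{jk}=\tfrac12\chi_j\chi_k+\tilde\theta_{jk}$, using the corrector equation itself (in the form $Av_k=\lap\chi_k-Av\cdot\grad\chi_k$) to push the dangerous $O(A)$ part into the divergence-free drift term $\tfrac{A}{2}\,v\cdot\grad(\chi_j\chi_k)$, leaving a source $(\del_k\chi_j+\del_j\chi_k)+\grad\chi_j\cdot\grad\chi_k+\langle S_{jk}\rangle$ whose $L^p$ norm you control purely from $\norm{\grad\chi_j}_{L^q}\lesssim A^{(q-1)/(2q)}$. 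This is cleaner and requires no knowledge of the corrector's profile beyond uniform $L^\infty$ and gradient $L^p$ bounds, though it is morally the same absorption idea: the paper's $f_i$ are built from the approximate profile, your $\tfrac12\chi_j\chi_k$ is the exact analogue. Your computation of $\langle S_{jk}\rangle=\delta_{jk}-\bar\sigma_{jk}(A)$ by multiplying \eqref{intro16} by $\chi_k$ and symmetrizing matches the paper.

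\emph{One shared caveat.} Your argument, like the paper's, ultimately requires the drift-independent $L^p\to L^\infty$ bound of Lemma~\ref{lmaDriftIndLinfBound} for $p$ close to $1$ (so as to reach $\norm{\theta_{jk}}_{\linf}\lesssim A^{1/2+\eta}$ for arbitrarily small $\eta$). As stated, Lemma~\ref{lmaDriftIndLinfBound} requires $p>d=2$, which would only give $A^{3/4+\epsilon}$ and would not close the argument for small $\alpha$. But the paper's own proof of Lemma~\ref{lmaSupSolbis} selects $p=\tfrac{8-2\alpha}{8-3\alpha}$, which is $<2$ precisely when $\alpha<2$; so the paper is using the estimate for $p\in(1,2)$ as well. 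The correct threshold for an $L^p\to L^\infty$ source-to-solution bound in $d$ dimensions is $p>d/2$, i.e.\ $p>1$ here, and the condition ``$p>d$'' in the statement of Lemma~\ref{lmaDriftIndLinfBound} is evidently a slip. Modulo that shared issue, your argument closes for all $\alpha\in(0,4)$, as required.
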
%}}}

The proof of Theorem~\ref{ppnExitTimeStrongFlow} is similar steps to that of Theorem~\ref{thmEvalStrongFlow}. For the proof of Theorem~\ref{ppnExitTimeHomog}, as mentioned earlier, we need to perform a multi-scale expansion up to two correctors, and choose the slow profile to be quadratic. When the domain is a disk, a quadratic function is exactly the solution to the homogenized problem! This gives us a sharper estimate for $\tau$.
\begin{proposition}\label{prop-disk}%{{{
Let $B_L$ be a disk of radius $L$, and $\tau_{A,L}$ be the solution of \eqref{eqnExitTimeProblem} in $B_L$. If $A$ and $L$ satisfy the assumptions in Theorem~\ref{ppnExitTimeHomog} then
\begin{equation}\label{eqnTau1Bound-bis}
\abs*{\tau(x) - \frac{1}{2 \trace(\bar\sigma(A))} \left( L^2 - \abs{x}^2 \right)} \leq c \frac{L}{A^{1/4}}
%\frac{cL^2}{\trace(\bar \sigma{(A)})}  L^{-\alpha/4} 
\end{equation}
where $c > 0$ is independent of $A$ and $L$. Here $\bar \sigma(A)$ is the effective diffusion matrix and $\trace(\bar\sigma(A))$ denotes the trace of this matrix.
\end{proposition}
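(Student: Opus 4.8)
The plan is to construct matching sub- and super-solutions for the exit time problem \eqref{eqnExitTimeProblem} on $B_L$ via a two-term multi-scale expansion, exploiting the fact that on the disk the homogenized exit time is \emph{exactly} the quadratic $\tau_0(x) = \frac{1}{2\trace(\bar\sigma(A))}(L^2 - \abs{x}^2)$. Indeed, $\tau_0$ solves $-\grad\cdot(\bar\sigma(A)\grad\tau_0) = 1$ with $\tau_0 = 0$ on $\del B_L$, because $\bar\sigma(A) = \bar\sigma_{11}(A)I$ is a scalar matrix (by the symmetry of the cellular flow) and $-\bar\sigma_{11}\lap\tau_0 = 1$. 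The ansatz will be
\[
\tau^{\pm}(x) = \tau_0(x) + \eps^2 \chi_j^A(x/\eps)\,\del_j\tau_0(x) + \eps^2 \Theta^A(x/\eps)\,\del^2_{jk}\tau_0(x) \pm (\text{error correction}),
\]
where $\eps = 1/L$, $\chi_j^A$ is the first corrector from \eqref{intro16} (for which the asymptotic profile and explicit $A\to\infty$ bounds of \cite{bblFannjiangPapanicolaou} are available), and $\Theta^A$ is a second corrector solving an appropriate cell problem with the first-order operator $-\lap + Av\cdot\grad$ on its left-hand side. The crucial simplification, flagged in the introduction, is that since $\tau_0$ is quadratic, $\del^2_{jk}\tau_0$ is \emph{constant}, so the usual problematic term $A v\cdot\grad_x(\text{2nd corrector})$ — which carries a factor of $A$ with no good bound — vanishes identically.

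The key steps, in order: (i) Record that $\bar\sigma(A) = \bar\sigma_{11}(A)I$ and $\bar\sigma_{11}(A) = \sigma_0\sqrt{A}(1+o(1))$ from \eqref{eqnEffectiveDiffusivityAsym}, so $\tau_0$ solves the homogenized problem exactly and $\norm{\tau_0}_{L^\infty} \sim L^2/\sqrt{A}$. (ii) Plug $\tau^{\pm}$ into $-\lap + Av\cdot\grad$ and collect terms by powers of $\eps$: the $\eps^{-1}$ and $\eps^0$ terms cancel by the corrector equations \eqref{intro16} and the definition of $\bar\sigma$, exactly as in classical homogenization; the leftover is a sum of terms involving $\grad_y\chi_j^A$, $\chi_j^A$, $\Theta^A$, and their derivatives, multiplied by derivatives of $\tau_0$ of order $\le 2$. (iii) Bound the second corrector $\Theta^A$: it satisfies an elliptic equation whose \emph{first-order part} is the incompressible drift $Av\cdot\grad$, and whose right-hand side is expressible through $\chi_j^A$ and $\grad\chi_j^A$, whose $L^p$ norms are controlled (the boundary layer of width $1/\sqrt{A}$ contributes $\norm{\grad\chi_j^A}_{L^2(Q)}^2 \sim \sqrt{A}$, $\norm{\chi_j^A}_{L^\infty}\sim 1$, etc.). Apply the \emph{drift-independent} $L^p\to L^\infty$ estimate for elliptic equations with divergence-free drift \cites{bblBerestyckiKiselevNovikovRyzhik,bblFannjiangKiselevRyzhik} to conclude $\norm{\Theta^A}_{L^\infty} \le C A^{1/4}$ (or the relevant power), with no hidden $A$-dependence from the drift. (iv) Assemble the residual: the total error when $\tau^{\pm}$ is tested against the operator is $O(\eps^2 A^{1/4}) = O(A^{1/4}/L^2)$ in $L^\infty$, and the boundary values of $\tau^\pm$ on $\del B_L$ are $O(\eps^2 \cdot \text{correctors}\cdot\grad\tau_0) = O(L^{-1}A^{1/4})$. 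Adding a correction term solving $-\lap u = \pm(\text{residual})$ with matching boundary data — whose $L^\infty$ norm is controlled by $L^2$ times the residual, i.e.\ $O(A^{1/4})$, \emph{wait} — one must be careful here: the correction should instead be built to absorb both the interior residual and the boundary error so that its contribution is $O(L/A^{1/4})$; this is where the precise bookkeeping of powers matters.

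The main obstacle I expect is step (iii) together with the final error accounting in step (iv): one must track the $A$-dependence of every term in the residual through the boundary layer scaling, and verify that after multiplying by $\eps^2$ and solving the Poisson-type correction problem (which costs a factor $\sim L^2$ in the worst case, or better if one uses the sign of the residual and a barrier rather than a crude energy estimate), the net discrepancy between $\tau$ and $\tau_0$ is genuinely $O(L/A^{1/4})$ rather than something larger. Getting the sharp power $L/A^{1/4}$ — as opposed to $L^2/A^{1/2}$ times logarithms — will require either that the leading residual has a favorable sign allowing a barrier argument, or a more delicate splitting of the second corrector into a mean part and a fast-oscillating part so that only the smaller piece enters the error. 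Once this is done, $\tau^- \le \tau \le \tau^+$ by the maximum principle for $-\lap + Av\cdot\grad$, and \eqref{eqnTau1Bound-bis} follows since $\abs{\tau^\pm - \tau_0} \le c L/A^{1/4}$.
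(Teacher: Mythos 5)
Your overall strategy --- a two-corrector multi-scale expansion with a quadratic slow profile, plus drift-independent $L^p\to L^\infty$ bounds for the second corrector --- is the same one the paper uses, and your comments on why the quadratic profile is forced (to kill the $A\,v\cdot\grad_x(\text{second corrector})$ terms) and why the disk is special are exactly right. However, there is a genuine gap in step (iv), and you sense it yourself (``wait --- one must be careful here'').

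The missing observation is that with the quadratic profile there is \emph{no interior residual at all}. In the rescaled unit ball, with
$\tilde\tau = \tau_{10}(x) + \tfrac{1}{L}\tau_{11}(x,y) + \tfrac{1}{L^2}\tau_{12}(y)$ and $y = Lx$, the cell problems for $\tau_{11}$ and $\tau_{12}$ cancel the $O(L)$ and $O(1)$ terms by design; the would-be $O(1/L)$ remainder $-\tfrac{1}{L}\lap_x\tau_{11}$ vanishes because $\tau_{11}$ is \emph{linear} in $x$; and $\tau_{12}$ has no slow dependence because the right-hand side of its cell problem is independent of $x$ (again because $\grad_x^2\tau_{10}$ is constant). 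Carrying out the computation, one finds
$(-\lap + ALv(Lx)\cdot\grad)\tilde\tau \equiv \trace(\bar\sigma(A))$ \emph{exactly}, with no error term. This is what the paper calls ``a miracle that happens only when the domain is a disk.'' Once you have this, no Poisson-type correction, no barrier argument, no sign analysis of a residual is needed: divide $\tilde\tau$ by $\trace(\bar\sigma(A))$, shift it up and down by a constant of order $\|\tfrac{1}{L}\tau_{11} + \tfrac{1}{L^2}\tau_{12}\|_\infty/\trace(\bar\sigma(A))$ to fix the sign on $\del B_1$, and apply the maximum principle. The boundary discrepancy is the only source of error, and after rescaling to $B_L$ it gives precisely $O(L/A^{1/4})$.

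Your plan of solving an auxiliary Poisson problem to absorb the residual would, as you suspect, lose a factor of $L^2$ and not reach the sharp bound. The resolution is not a more delicate splitting of the residual --- it is that the residual vanishes. A secondary quantitative point: the bound $\|\Theta^A\|_{L^\infty}\le CA^{1/4}$ you anticipate for the second corrector is too optimistic as stated. The paper's estimate is $\|\tau_{12}\|_{L^\infty}\le cA^{1-1/(2p)}$ for $p>1$, and getting even that requires decomposing $\tau_{12}$ into three pieces: one controlled directly via $\|\grad\chi_j\|_{L^2}\sim A^{1/4}$, one whose source is supported essentially in the $1/\sqrt{A}$ boundary layer so that $\xi_j=\chi_j+y_j-\tfrac12\sign(y_j)$ has small $L^p$ norm, and a third whose source is absorbed into the drift by subtracting an explicit $C^1$ periodic profile. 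Only after choosing $p$ as a function of $\alpha$ and dividing by $L^2$ does one obtain the needed $O(L^{-\alpha/4})$ smallness.
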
%}}}
\begin{remark}%{{{
Note that right hand side of\eqref{eqnTau1Bound-bis} tends to infinity as $A, L \to \infty$. However, by~\eqref{eqnEffectiveDiffusivityAsym} the terms on the left are of order $L^2/\sqrt{A}$, which dominates the right hand side. Thus~\eqref{eqnTau1Bound-bis} immediately implies~\eqref{071104}.
\end{remark}%}}}

By fitting a disk inside, and outside a square, Proposition~\ref{prop-disk} quickly implies Theorem~\ref{ppnExitTimeHomog}. Further, since it is well known that the principal eigenvalue is bounded below by the maximum expected exit time, the lower bound in Theorem~\ref{thmEvalHomog} also quickly follows from Proposition~\ref{prop-disk}. The upper bound is a little more technical, however, also uses Proposition~\ref{prop-disk} as the main idea.\medskip

We mention that we have chosen to use the particular form of the stream-function $H(x_1,x_2)=\frac{1}{\pi}\sin(\pi x_1)\sin(\pi x_2)$ simply 
for the sake of convenience. All our results may be generalized to other periodic flows with a cellular structure without any difficulty.
We also believe that for other flows the transition from the averaging to the homogenization regime happens when the effective
diffusivity $\bar \sigma(A)$ balances with the domain size $L$.  That is, when
the ``homogenized eigenvalue'' $\bar \sigma(A)/L^2$ is of the same order
as the ``strong flow'' eigenvalue:
\begin{equation}\label{072802}
\lim_{A\to\infty}\frac{\bar \sigma(A)}{L^2\lambda_{L,A}}\approx 1.
\end{equation}
We leave this question for a future study.

This paper is organized as follows. The averaging regime is considered in Sections~\ref{sec:eigfast} and~\ref{sec:exitfast}. The former contains the proof of Theorem~\ref{thmEvalStrongFlow} and the latter of Theorem~\ref{ppnExitTimeStrongFlow}. The rest of the paper addresses the homogenization regime. The key step here is Proposition~\ref{prop-disk} proved in Section~\ref{sxnHomogExitTime}. From this, Theorem~\ref{ppnExitTimeHomog} quickly follows, and the proof is presented the same section. Theorem~\ref{thmEvalHomog} is proved in Section~\ref{sec:homeig}.

\subsubsection*{Acknowledgement}
GI was supported by NSF grant DMS-1007914, TK by Polish Ministry of Science and Higher Education grant NN 201419139, AN by NSF grant DMS-0908011, and LR by NSF grant DMS-0908507, and NSSEFF fellowship. We thank Po-Shen Loh for suggesting the proof of Lemma~\ref{lmaGI}.
 
 \section{The eigenvalue in the strong flow regime}\label{sec:eigfast}%{{{1

In this section we present the proof of Theorem~\ref{thmEvalStrongFlow}.
First, we discuss the proof of the upper bound in \eqref{eqnStrongFlowEvalBounds},
followed by the proof of the corresponding lower bound, and, finally, of the 
limiting behavior in \eqref{070802}.

\subsection{The upper bound}\label{sxnStrongFlowUpperBound}%{{{2

The upper bound for $\lambda$ in \eqref{eqnStrongFlowEvalBounds}  follows 
directly from the techniques of~\cite{bblBeresHamelNadirshvilli}. 
We carry out the details below.
Following~\cite{bblBeresHamelNadirshvilli}, given any test function
$w \in H^1_0(D)$, and a number $\alpha > 0$, we multiply~\eqref{eqnEval} 
by $ {w^2}/{(\varphi + \alpha)}$ and integrate over $D$ to obtain
\begin{equation}\label{eqnLambdaUBD1}
\lambda \int_D \frac{w^2 \varphi}{\varphi + \alpha} = - \int_D \frac{w^2 \lap \varphi}{\varphi + \alpha} + A \int_D \frac{w^2}{\varphi + \alpha} v \cdot \grad \varphi.
\end{equation}
For the first term on the right, we have
\begin{eqnarray*}
&&- \int_D \frac{w^2 \lap \varphi}{\varphi + \alpha} 
= \int_D \grad \varphi \cdot \left( \frac{ 2 w (\varphi + \alpha) \grad w - 
w^2 \grad \varphi}{ (\varphi + \alpha)^2} \right)\\
&&= \int_D \abs{\grad w}^2 - \int_D \frac{\abs{w \grad \varphi 
- (\varphi + \alpha) \grad w}^2}{(\varphi + \alpha)^2}
\leq \int_D \abs{\grad w}^2.
\end{eqnarray*}
For the second term on the right of \eqref{eqnLambdaUBD1} we have, 
since $u$ is incompressible,
\begin{align*}
\int_D \frac{w^2}{\varphi + \alpha} v \cdot \grad \varphi = \int_D w^2 v \cdot \grad \ln(\varphi + \alpha) = -2 \int_D \ln(\varphi + \alpha) w (v \cdot \grad w).
\end{align*}
Hence, equation~\eqref{eqnLambdaUBD1} reduces to
\begin{equation}\label{eqnLambdaUBD2}
\lambda \int_D \frac{w^2 \varphi}{\varphi + \alpha} \leq \int_D \abs{\grad w}^2 - 
2A \int_D \ln(\varphi + \alpha) w (v \cdot \grad w).
\end{equation}

Now, choose $w$ to be any $H^1_0(D)$ first integral of $v$ (that is,
 $v \cdot \grad w = 0$). Then, equation~\eqref{eqnLambdaUBD2} reduces to
$$
\lambda \int_D \frac{w^2 \varphi}{\varphi + \alpha} \leq \int_D \abs{\grad w}^2.
$$
Upon sending $\alpha \to 0$, the Monotone Convergence Theorem shows
$$
\lambda \int_D w^2 \leq  \int_D \abs{\grad w}^2
$$
for any $H^1_0(D)$ first integral of $v$. Choosing $w = H(x)$, which, 
of course, does not depend on $L$, we immediately see that
\[ 
\lambda \leq \left( \int_D H^2  \right)\inv \int_D \abs{\grad H}^2
    =\left( L^2 \int_{Q_0} H^2  \right)\inv L^2 \int_{Q_0} \abs{\grad H}^2\\
    =\left( \int_{Q_0} H^2  \right)\inv \int_{Q_0} \abs{\grad H}^2,
\]
where $Q_0$ is any cell in $D$. This gives a finite upper bound for $\lambda$ that is independent of $L$ and $A$.

%stopzone
\subsection{The lower bound}\label{sxnStrongFlowLowerBound}%{{{2
The outline of the proof is as follows. The basic idea is that if the domain size $L$ is 
not too large, and the flow is sufficiently strong,  the eigenfunction $\varphi$ 
should be small not only near the boundary $\partial D$
but also on the whole skeleton of separatrices inside $D$. Therefore, the Dirichlet
eigenvalue problem for the whole domain $D$ is essentially equivalent to a one-cell
Dirichlet problem, which gives the correct asymptotics for the eigenvalue for $A$ large.
To this end, we first estimate the oscillation of $\varphi$   
along a streamline of $v$ inside one cell that is sufficiently close to 
the separatrix, and show that this oscillation is small: see Lemma~\ref{lmaOscOnSep}. 
Next, we show that the difference of the values of $\varphi$ on two streamlines of $v$ 
(sufficiently close to the separatrix) in two neighbouring cells must be small, as in Lemma~\ref{lmaOscBetweenCells} below. These two steps are very similar to those in~\cite{bblFannjiangKiselevRyzhik}, and their proofs are only sketched.

Now, considering the `worst case scenario' of the above oscillation estimates, 
we obtain a pointwise upper bound on $\varphi$ on streamlines of $v$ near 
separatrices in terms of the principal eigenvalue $\lambda$, and 
$\norm{\varphi}_{L^2}^2$: see Lemma~\ref{lmaGI}. Next, we show that the streamlines 
above enclose a large enough region to encompass most of the mass of $\varphi^2$. Finally, we use the drift independent apriori estimates in~\cites{bblBerestyckiKiselevNovikovRyzhik,bblIyerNovkiovRyzhikZlatos} to obtain the desired lower bound on $\lambda$.

\subsubsection*{A streamline oscillation estimate}%{{{2

The basic reason behind the fact that the eigenfunction is constant on streamlines is 
the following estimate, originally due to S. Heinze~\cite{Heinze}. 
\begin{lemma}\label{lmaHeinz}%{{{
There exists a constant $C>0$ so that   we have
\begin{equation}\label{070804}
\int_{D}|v\cdot \nabla\varphi|^2dx\leq\frac{C}{A}\int_{D}|\nabla\varphi|^2dx=\frac{C\lambda}{A}\|\varphi\|_{L^2}^2.
\end{equation}
\end{lemma}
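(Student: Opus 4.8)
The plan is to derive this estimate from the eigenvalue equation \eqref{eqnEval} by a careful integration by parts, testing against $\varphi$ and its composition with a first integral of the flow. The key observation is that $v\cdot\grad\varphi$ measures the failure of $\varphi$ to be constant along streamlines, and its size should be controlled by the ``slow'' part of the energy $\int|\grad\varphi|^2$ divided by $A$.

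First I would multiply \eqref{eqnEval} by $\varphi$ and integrate over $D$; since $v$ is incompressible and $\varphi$ vanishes on $\partial D$, the drift term $A\int_D \varphi\, v\cdot\grad\varphi = \tfrac{A}{2}\int_D v\cdot\grad(\varphi^2) = 0$, which recovers the identity $\int_D|\grad\varphi|^2 = \lambda\|\varphi\|_{L^2}^2$ and explains the second equality in \eqref{070804}. So the content is the first inequality. To get at $v\cdot\grad\varphi$, I would instead test the equation against a function built from the stream function, the natural candidate being $G(H(x))$ for a suitable smooth $G$, or more directly work with the decomposition $\grad\varphi = (v\cdot\grad\varphi)\,\frac{v}{|v|^2} + (\text{component along }\grad H)$ away from the critical points of $H$. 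The cleanest route, following Heinze, is to note that $v\cdot\grad\varphi = \{H,\varphi\}$ (a Poisson bracket) and to integrate $A\,v\cdot\grad\varphi = \lap\varphi + \lambda\varphi$ against $v\cdot\grad\varphi$ itself. This gives
\[
A\int_D |v\cdot\grad\varphi|^2 = \int_D (\lap\varphi)(v\cdot\grad\varphi) + \lambda\int_D \varphi\,(v\cdot\grad\varphi).
\]
The second term on the right vanishes again by incompressibility. For the first term, integrating by parts moves a derivative onto $v\cdot\grad\varphi$; using that $v$ has bounded derivatives (it is a fixed smooth periodic field) one bounds $\int_D(\lap\varphi)(v\cdot\grad\varphi)$ by $C\int_D|\grad\varphi|^2$ after the cross terms involving $\grad v$ are absorbed — here one uses that $\grad(v\cdot\grad\varphi) = (\grad v)\grad\varphi + (D^2\varphi)\,v$ and that the $D^2\varphi$ piece pairs with $\grad\varphi$ to again produce a perfect derivative of $|\grad\varphi|^2$ along $v$, which integrates to zero. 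This yields $A\int_D|v\cdot\grad\varphi|^2 \leq C\int_D|\grad\varphi|^2$, i.e.\ \eqref{070804}.

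The main obstacle I anticipate is handling the second-derivative terms and the critical points of $H$ rigorously: the formal manipulation above produces a term like $\int_D (D^2\varphi\, v)\cdot(v\cdot\grad\varphi)$ whose treatment requires recognizing it as $\tfrac12\int_D v\cdot\grad(|v\cdot\grad\varphi|^2)/\ldots$ — one must be careful that all integrations by parts are justified (they are, since $\varphi\in H^1_0(D)$ is smooth in the interior by elliptic regularity and the boundary terms vanish), and that the constant $C$ depends only on $\|\grad v\|_{L^\infty}$, hence not on $A$ or $L$ (the flow is the same periodic field on every cell). A cleaner bookkeeping is to write everything in terms of the Poisson bracket structure and note $\int_D |v\cdot\grad\varphi|^2 = \int_D |\grad H|^2|\grad\varphi|^2 - \int_D (\grad H\cdot\grad\varphi)^2$, but the integration-by-parts argument above is the most robust. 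Finally I would remark that the right-most equality in \eqref{070804} is just the energy identity already noted, so it requires no separate argument.
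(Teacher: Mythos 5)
Your proposal is correct and follows essentially the same route as the paper: multiply $Av\cdot\grad\varphi=\lap\varphi+\lambda\varphi$ by $v\cdot\grad\varphi$, kill the $\lambda$-term by incompressibility, and integrate the Laplacian term by parts, observing that the $D^2\varphi$ contribution is $-\tfrac12\int_D v\cdot\grad|\grad\varphi|^2$ which vanishes since $\divergence v=0$ and $v\cdot\nu=0$ on $\del D$, while the $\grad v$ contribution is bounded by $\norm{\grad v}_{L^\infty}\int_D|\grad\varphi|^2=C\lambda\norm{\varphi}_{L^2}^2$. Your remarks about the vanishing boundary terms and the $A$-, $L$-independence of $C$ match what the paper uses.
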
%}}}
\begin{proof}%{{{
Let us use the normalization $\|\varphi\|_{L^2}=1$. We multiply~\eqref{eqnEval} by $v \cdot \nabla\varphi$, 
and integrate over $Q_i$. This gives
$$
A \int_{D} \abs{v \cdot \grad \varphi}^2 = 
\lambda \int_{D} \varphi (v \cdot \grad \varphi) + 
\int_{D} \lap \varphi (v \cdot \grad \varphi).
$$
Notice that
$$
\int_{D} \varphi (v \cdot \grad \varphi) = 
\frac{1}{2} \int_{D} v \cdot \grad \left( \varphi^2 \right) =   0,
$$
since $v\cdot\nu = 0$ on $\del Q_i$ and $\nabla\cdot v=0$. Similarly, we have, 
as $v\cdot\nabla\varphi=0$ on $\partial D$:
\begin{multline*}
\int_{D} \lap \varphi (v \cdot \grad \varphi) = 
    -\sum_{j=1}^2\int_{D} \del_j \varphi \left( \del_j v \cdot \grad \varphi \right) - \sum_{j=1}^2\int_{D} \del_j \varphi \left( v \cdot \grad \del_j \varphi \right) + \int_{\partial D}(\nu\cdot\nabla\varphi)(v\cdot\nabla\varphi) \, dS\\
    \leq \norm{\grad v}_{L^\infty(Q_i)} \norm{\grad \varphi}_{L^2(D)}^2 - \frac{1}{2} \sum_{j=1}^2\int_{D} \divergence \left( v \left(\del_j \varphi\right)^2 \right)=C \lambda,
\end{multline*}
and consequently we obtain \eqref{070804}.
\end{proof}%}}}

The next lemma bounds locally the oscillation on streamlines in terms of the $L^2$-norm 
of $v\cdot\nabla\varphi$.
\begin{lemma}\label{lmaOscOnSep}%{{{
Let $Q_i$ be any cell. 
For any $\delta_0 > 0$, there exists $\Gamma_i \subset Q_i$ such that $\Gamma_i$ is a level set of $H$, $\abs{H(\Gamma_i)} \in (\delta_0, 2\delta_0)$, and
\begin{equation}\label{eqnOscOnSep1}
\sup_{x_1,x_2 \in \Gamma_i} \abs{\varphi(x_1) - \varphi(x_2)}^2 \leq C \frac{1}{\delta_0} \log\left( \frac{1}{\delta_0} \right) \int_{Q_i} \abs{v \cdot \grad \varphi}^2.
\end{equation}
for some constant $C$ independent of $A, L, \delta_0$. 
%In addition, we have
%\begin{equation}\label{eqnOscOnSep2}
%\sup_{x_1,x_2 \in \Gamma_i} \abs{\varphi(x_1) - \varphi(x_2)}^2 
%\leq C \frac{\alpha_i}{\delta_0 A} \log\left(\frac{1}{\delta_0}\right)
%\end{equation}
\end{lemma}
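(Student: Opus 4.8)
The plan is to pass to ``action--angle'' type coordinates adapted to the Hamiltonian $H$ inside the cell $Q_i$, and then argue by a pigeonhole over a dyadic window of level sets. Recall that inside $Q_i$ the level sets $\{\abs{H} = h\}$, $0 < h < \pi^{-1}$, are smooth closed streamlines of $v$, that $\abs{v} = \abs{\nabla H}$ for our stream function, and that a particle moving with velocity $v$ traverses the streamline $\{\abs{H} = h\}$ in a finite period $T(h)$, with $dt = ds/\abs{\nabla H}$ along it ($s$ being arc length). The co-area formula then gives the exact identity
\[
\int_{Q_i} \abs{v \cdot \nabla \varphi}^2 \, dx = \int_0^{\pi^{-1}} g(h) \, dh, \qquad g(h) := \int_0^{T(h)} \abs{v \cdot \nabla \varphi}^2\bigl(X^h_t\bigr) \, dt,
\]
where $X^h$ denotes the periodic orbit at level $h$. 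The factor $1/\abs{\nabla H}$ in co-area is singular near the hyperbolic stagnation points at the corners of $Q_i$, but that singularity is precisely what is packaged into the finite period $T(h)$, so no integrability issue arises. Note $h^\ast > \delta_0 > 0$ below keeps $\Gamma_i$ at positive distance from the separatrix, where $\varphi$ is smooth, so all these manipulations are legitimate.

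Next I would bound the oscillation of $\varphi$ along a \emph{single} streamline. Since $\tfrac{d}{dt}\varphi(X^h_t) = v\cdot\nabla\varphi(X^h_t)$, for any two points $x_1, x_2$ on $\Gamma := \{\abs{H} = h\} \cap Q_i$ one has $\abs{\varphi(x_1) - \varphi(x_2)} \le \int_0^{T(h)} \abs{v\cdot\nabla\varphi}(X^h_t)\,dt$, and Cauchy--Schwarz yields
\[
\sup_{x_1, x_2 \in \Gamma} \abs{\varphi(x_1) - \varphi(x_2)}^2 \le T(h)\, g(h).
\]
It therefore remains to produce a value $h \in (\delta_0, 2\delta_0)$ making the right-hand side small.

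The key quantitative input --- and the main technical point --- is the classical logarithmic bound $T(h) \le C\log(1/h)$ for $h$ small, with $C$ independent of $A$, $L$, $\delta_0$: near each hyperbolic stagnation point at a corner of $Q_i$ one has $H \approx c\,\xi_1\xi_2$ in suitable local coordinates, so the transit time of the level-$h$ orbit past such a point is $O(\log(1/h))$, while away from the corners the orbit speed is bounded below. This is exactly the ingredient used in~\cite{bblFannjiangKiselevRyzhik}. Granting it, for $\delta_0$ small every $h \in (\delta_0, 2\delta_0)$ satisfies $T(h) \le C\log(1/\delta_0)$, hence $\int_{\delta_0}^{2\delta_0} T(h) g(h)\,dh \le C\log(1/\delta_0)\int_{Q_i}\abs{v\cdot\nabla\varphi}^2$, and by the mean value theorem there is $h^\ast \in (\delta_0, 2\delta_0)$ with $T(h^\ast) g(h^\ast) \le \frac{C}{\delta_0}\log(1/\delta_0)\int_{Q_i}\abs{v\cdot\nabla\varphi}^2$. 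Taking $\Gamma_i = \{\abs{H} = h^\ast\}\cap Q_i$, so that $\abs{H(\Gamma_i)} = h^\ast \in (\delta_0, 2\delta_0)$, and combining with the oscillation bound of the previous paragraph gives~\eqref{eqnOscOnSep1}. Every step other than the period estimate near the separatrix corners is co-area plus Cauchy--Schwarz plus pigeonhole, matching the sketch in~\cite{bblFannjiangKiselevRyzhik} that the authors refer to.
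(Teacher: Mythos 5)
Your proposal is correct and takes essentially the same route as the paper: the time parametrization of the orbits (with $g(h)$ the co-area density) is exactly the paper's curvilinear $(h,\theta)$ coordinates, since $dt = d\theta/(\abs{\nabla\Theta_i}\abs{\nabla H})$ is the measure appearing there. Both arguments are Cauchy--Schwarz along the streamline, the classical $T(h)\le C\log(1/h)$ period bound near a hyperbolic corner, and a mean-value pigeonhole over $h\in(\delta_0,2\delta_0)$.
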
%}}}
We will see that $\delta_0$ is the `width' of the boundary layer, and will eventually be chosen to be $\delta_0\approx {1}/{\sqrt{A}}$.

\begin{proof}%{{{
The proof is straightforward and similar bounds have already appeared in~\cites{bblKiselevRyzhik,bblFannjiangKiselevRyzhik}. We sketch the details here 
for convenience. First we introduce curvilinear coordinates in the cell $Q_i$. 
For this, let $(x_i, y_i)$ be the center of $Q_i$, and $\Theta_i$ be the solution of
\begin{equation}\label{eqnThetaDef}
\left\{
\begin{aligned}
\grad \Theta_i \cdot \grad H &= 0 && \text{in } Q_i - \{(x_i +t, y_i) \st t \geq 0\}\\
\Theta_i(x, y) &= \tan\inv\left( \frac{y - y_i}{x - x_i} \right) && \text{on } \del Q_i.
\end{aligned}\right.
\end{equation}
As usual, we extend $\Theta$ to $Q_i$ by defining it to 
be $0$ (or $2\pi$) on $\{(x_i + t, y_i) \st t \geq 0 \}$.

In the coordinates $(h, \theta)$ given by the functions $H$ and $\Theta_i$, 
it is easy to check that
$$
\frac{\del \varphi}{\del \theta} = \frac{v \cdot \grad \varphi}{\abs{\grad \Theta_i} \abs{\grad H}}.
$$
Assume, for simplicity, that $H \geq 0$ on $Q_i$. Then for any 
$h \in (\delta_0, 2\delta_0)$, we have
\begin{align*}
\sup_{\theta_1, \theta_2} \abs{\varphi(h, \theta_1) - \varphi(h,\theta_2)}^2
    &\leq \left( \int_{\{H = h\}} \abs{v \cdot \grad \varphi} \, 
\frac{d \theta}{\abs{\grad \Theta_i} \abs{\grad H}} \right)^2\\
    &\leq \int_{\{H = h\}} \abs{ v \cdot \grad \varphi}^2 \frac{d\theta}{\abs{\grad \Theta_i}\abs{\grad H}} \int_{\{H = h\}} \frac{d\theta}{\abs{\grad \Theta_i}\abs{\grad H}}\\
    &\leq C \ln \left(\frac{1}{\delta_0}\right) \int_{\{H = h\}} \abs{ v \cdot \grad \varphi}^2 \frac{d\theta}{\abs{\grad \Theta_i}\abs{\grad H}}.
\end{align*}
The last inequality follows from the fact that
$$
\int_{\{H = h\}} \frac{d\theta}{\abs{\grad \Theta_i}\abs{\grad H}}
    \leq C \ln\frac{1}{\delta_0},
$$
as the length element along the contour $dl=d\theta/|\nabla\Theta|$.
Now integrating over $(\delta_0, 2 \delta_0)$, we get
$$
\int_{\delta_0}^{2\delta_0} \sup_{\theta_1, \theta_2} \abs{\varphi(h, \theta_1) - \varphi(h,\theta_2)}^2 \, dh \leq C \ln \frac{1}{\delta_0} \int_{Q_i} \abs{v \cdot \grad \varphi}^2
$$
and~\eqref{eqnOscOnSep1} follows from the mean value theorem.
\end{proof}%}}}

\subsubsection*{Variation between neighboring cells}%{{{2

Now, we consider two streamlines on which the solution is nearly constant and 
estimate the possible jump in the value of $\varphi$ between them.

\begin{lemma}\label{lmaOscBetweenCells}%{{{
Let $Q_i$ and $Q_j$ be two neighbouring cells, $\Gamma_i \subset Q_i$, $\Gamma_j \subset Q_j$ the respective level sets from Lemma~\ref{lmaOscOnSep}, and let $h_i = H(\Gamma_i)$, $h_j = H(\Gamma_j)$. Then there exists $x_i \in \Gamma_i$ and $x_j \in \Gamma_j$ such that
\begin{equation}\label{071506}
%\inf_{x \in \Gamma_i, y \in \Gamma_j} \abs*{ \varphi(x) - \varphi(y) }^2 \leq C \delta_0 (\alpha_i + \alpha_j)
\abs*{ \varphi(x_i) - \varphi(x_j) }^2 \leq C \delta_0 \int_{Q_i \cup Q_j} \abs{\grad \varphi}^2+
C\frac{1}{\delta_0} \log\left( \frac{1}{\delta_0}\right)\int_{Q_i \cup Q_j} \abs{v\cdot\grad \varphi}^2.
\end{equation}
\end{lemma}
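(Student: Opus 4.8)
\textbf{Proof proposal for Lemma~\ref{lmaOscBetweenCells}.}

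The plan is to connect a streamline $\Gamma_i$ in $Q_i$ to a streamline $\Gamma_j$ in the neighboring cell $Q_j$ by a short path that stays close to the separatrix shared by the two cells, and to estimate the variation of $\varphi$ along that path. The key geometric observation is that $\Gamma_i$ and $\Gamma_j$ are level sets of $H$ with $\abs{H}\in(\delta_0,2\delta_0)$, so both are at distance $O(\delta_0)$ from the common edge of the two cells (away from the hyperbolic stagnation points at the corners). We may therefore pick a point $y_i\in\Gamma_i$ and a point $y_j\in\Gamma_j$ lying on a common transversal segment $S$ that crosses the separatrix roughly perpendicularly, with $\mathrm{length}(S)=O(\delta_0)$, and such that $S$ avoids a neighborhood of the stagnation points. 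Then by Cauchy--Schwarz along $S$,
\[
\abs{\varphi(y_i)-\varphi(y_j)}^2 \leq \Big(\int_S \abs{\grad\varphi}\,dl\Big)^2 \leq \mathrm{length}(S)\int_S\abs{\grad\varphi}^2\,dl \leq C\delta_0\int_S\abs{\grad\varphi}^2\,dl,
\]
and, after integrating this over a one-parameter family of such transversal segments (a tubular neighborhood of the separatrix of width $O(\delta_0)$) and using the mean value theorem to select a good segment, the right side is controlled by $C\delta_0\int_{Q_i\cup Q_j}\abs{\grad\varphi}^2$. This produces the first term in~\eqref{071506}.

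Next, $y_i$ and $y_j$ are not the points $x_i,x_j$ promised by Lemma~\ref{lmaOscOnSep}; they are merely \emph{some} points on $\Gamma_i$ and $\Gamma_j$. But Lemma~\ref{lmaOscOnSep} bounds the \emph{full oscillation} of $\varphi$ on each of $\Gamma_i$ and $\Gamma_j$ by $C\delta_0^{-1}\log(1/\delta_0)\int_{Q_k}\abs{v\cdot\grad\varphi}^2$. Hence
\[
\abs{\varphi(x_i)-\varphi(x_j)}^2 \leq 3\abs{\varphi(x_i)-\varphi(y_i)}^2 + 3\abs{\varphi(y_i)-\varphi(y_j)}^2 + 3\abs{\varphi(y_j)-\varphi(x_j)}^2,
\]
and the first and third terms are each dominated by the oscillation bound on the respective streamline, giving the second term in~\eqref{071506}; the middle term was handled in the previous paragraph. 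Absorbing constants yields exactly the claimed inequality. (Which of $x_i,x_j$ from Lemma~\ref{lmaOscOnSep} one uses is immaterial since that lemma controls the sup over all pairs on the streamline.)

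The main obstacle is the behavior near the hyperbolic stagnation points sitting at the shared corners of $Q_i$ and $Q_j$: there $\abs{\grad H}\to 0$, the level sets $\Gamma_i,\Gamma_j$ pinch together, and a transversal segment joining them could be forced to have length much larger than $\delta_0$, or to pass through the region where $\grad\varphi$ has no good pointwise control. The resolution is to route the connecting segment \emph{away} from the stagnation points — e.g. through the midpoint of the common edge, where $\abs{\grad H}$ is bounded below by a constant independent of $A,L,\delta_0$ — so that the whole tubular neighborhood of width $O(\delta_0)$ used in the averaging argument stays in a region where $H$ behaves like a nondegenerate coordinate. This is where the specific cellular structure of $H(x_1,x_2)=\frac1\pi\sin(\pi x_1)\sin(\pi x_2)$ is used, and it is the same device employed in~\cite{bblFannjiangKiselevRyzhik}; accordingly I expect the authors to only sketch it. One should also double-check that both $\int\abs{\grad\varphi}^2$ and $\int\abs{v\cdot\grad\varphi}^2$ are taken over $Q_i\cup Q_j$ (rather than a single cell), which is automatic since the connecting path and both streamlines lie in that union.
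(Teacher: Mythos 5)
Your proposal is correct and matches the paper's argument essentially point for point: you use the same length-$O(\delta_0)$ transversals across the common separatrix with Cauchy--Schwarz plus an averaging/mean-value argument to produce the $C\delta_0\int_{Q_i\cup Q_j}|\grad\varphi|^2$ term (the paper does this in the curvilinear coordinates $(h,\theta)$, restricting to $|\theta|\le\delta_1<\pi/2$, which is exactly your ``route away from the stagnation points''), and then invoke Lemma~\ref{lmaOscOnSep} to pay the oscillation cost of moving along $\Gamma_i$ and $\Gamma_j$, which yields the $\delta_0^{-1}\log(1/\delta_0)$ term. The triangle-inequality presentation versus the paper's $\sup/\inf$ comparison is a cosmetic difference only.
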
%}}}
\begin{proof}%{{{
Assume again
for simplicity that $H \geq 0$ on $Q_i$, and $Q_i$ is to the left of $Q_j$. 
Then, using the local
curvilinear coordinates $(h,\theta)$ around the common boundary between the cells $Q_i$ and $Q_j$, as
in \eqref{eqnThetaDef}, we have
$$
\varphi(h_i, \theta) - \varphi( h_j, \theta ) = 
\int_{h_j}^{h_i} \frac{\del \varphi}{\del h} \, dh.
$$
Now, let $\delta_1 \in (0, \frac{\pi}{2})$ be fixed. 
In the region $\abs{h}\leq h_i$, and $\abs{\theta} \leq \delta_1$, 
we know that $\abs{\grad H} \approx 1$ and $\abs{\grad \Theta} \approx 1$. 
Hence, we have
$$
\int_{Q_i} \abs{\grad \varphi}^2 \geq C\int_{- \delta_1}^{ \delta_1}\int_{h_i}^{h_j}  \abs*{\frac{\del \varphi}{\del h}}^2 
\geq \frac{C}{\delta_0} \inf_{\abs{\theta} \leq \delta_1} \abs*{\varphi(h_j, \theta) - \varphi( h_i,  \theta )}^2.
$$
However, Lemma~\ref{lmaOscOnSep}  shows that
\begin{equation}\label{071504}
\sup_{\abs{\theta} \leq \delta_1} \abs*{\varphi(h_j, \theta) - \varphi( h_i,  \theta )}^2\leq
\inf_{\abs{\theta} \leq \delta_1} \abs*{\varphi(h_j, \theta) - \varphi( h_i,  \theta )}^2+
C \frac{1}{\delta_0} \log\left( \frac{1}{\delta_0} \right) \int_{Q_i\cup Q_j} \abs{v \cdot \grad \varphi}^2.
\end{equation}
%Repeating the same calculation in the cell $Q_j$, and noting that $\varphi(0, \theta_i + \theta)$ is on the common boundary between $Q_i$ and $Q_j$, we obtain
%$$
%\sup_{\abs{\theta} \leq \frac{\pi}{2} - \delta_1} \abs{\varphi(h_i, \theta_i + \theta) - \varphi(h_j, \theta_j - \theta)}^2 \leq C \delta_0 \int_{Q_i \cup Q_j} \abs{\grad \varphi}^2
%$$
%where $(h_i, \theta_i + \theta) \in \Gamma_i$ is specified with respect 
%to the curvilinear coordinates in the cell $Q_i$, and 
%$(h_i, \theta_i + \theta) \in \Gamma_i$ is specified with respect 
%to the curvilinear coordinates in $Q_j$. 
This concludes the proof of \eqref{071506}.
\end{proof}%}}}

\subsubsection*{Variation between two far away cells}%{{{2
For each cell $Q_i$,
we set
\[
\alpha_i=\int_{Q_i} \left( \abs{\nabla\varphi}^2 + A \abs{v \cdot \grad \varphi}^2 \right) \, dx
%\quad\text{and}\quad
%\aleph_i= A \int_{Q_i}|v\cdot\nabla\varphi|^2 \, dx
.
\]
Choosing $\delta_0 ={1}/{\sqrt{A}}$, Lemmas~\ref{lmaOscOnSep} 
and~\ref{lmaOscBetweenCells} immediately give the following oscillation estimate.
\begin{proposition}\label{ppnTotalOsc}%{{{
If $Q_i$ and $Q_j$ are any two cells, and $\Gamma_i \subset Q_i$, $\Gamma_j \subset Q_j$ the respective level sets from Lemma~\ref{lmaOscOnSep}, then
$$
\abs*{ \sup_{x_i \in \Gamma_i} \varphi(x_i) - \inf_{x_j \in \Gamma_j} \varphi(x_j)} 
    %\leq C \sum_{\text{line}}\left[ \frac{\sqrt{\alpha_k}}{A^{1/4}} + A^{1/4}{\left(\log A \right)^{1/2} }\aleph_k^{1/2}\right]
    \leq C \frac{ (\log A)^{1/2} }{A^{1/4}} \sum_{\text{line}} \sqrt{\alpha_k}.
$$
where the sum is taken over any path of cells that connects $Q_i$ and $Q_j$, consisting of only horizontal and vertical line segments.
\end{proposition}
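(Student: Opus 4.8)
The plan is to chain together the single-cell estimate of Lemma~\ref{lmaOscOnSep} with the neighbouring-cell estimate of Lemma~\ref{lmaOscBetweenCells} along the given path of cells, after substituting $\delta_0 = 1/\sqrt A$ and absorbing everything into the single scale $(\log A)^{1/2}/A^{1/4}$.

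First I would record what the two lemmas give with this choice of $\delta_0$. Since $1/\delta_0 = \sqrt A$ and $\log(1/\delta_0) = \tfrac12 \log A$, Lemma~\ref{lmaOscOnSep} gives, for each cell $Q_k$ along the path and its associated level set $\Gamma_k$,
$$
\sup_{x,x' \in \Gamma_k} \abs{\varphi(x) - \varphi(x')}^2 \leq C \frac{\log A}{\sqrt A} \int_{Q_k} A \abs{v\cdot\grad\varphi}^2 \leq C \frac{\log A}{\sqrt A}\, \alpha_k ,
$$
while Lemma~\ref{lmaOscBetweenCells}, using $\abs{\grad\varphi}^2 \leq \abs{\grad\varphi}^2 + A\abs{v\cdot\grad\varphi}^2$ for its first term and the same arithmetic for the second, gives for neighbouring cells $Q_k, Q_{k'}$ and the points $p \in \Gamma_k$, $q \in \Gamma_{k'}$ it produces,
$$
\abs{\varphi(p) - \varphi(q)}^2 \leq C\frac{1}{\sqrt A}(\alpha_k + \alpha_{k'}) + C\frac{\log A}{\sqrt A}(\alpha_k + \alpha_{k'}) \leq C\frac{\log A}{\sqrt A}(\alpha_k + \alpha_{k'}) .
$$
Taking square roots and using $\sqrt{a+b}\le\sqrt a + \sqrt b$, each such oscillation is at most $C \frac{(\log A)^{1/2}}{A^{1/4}}(\sqrt{\alpha_k} + \sqrt{\alpha_{k'}})$.

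Next comes the telescoping. Let $Q^{(0)} = Q_i, Q^{(1)}, \dots, Q^{(m)} = Q_j$ be the cells along the chosen path; consecutive cells share an edge (this is exactly where the restriction to horizontal and vertical segments is used, so that Lemma~\ref{lmaOscBetweenCells} applies at each step), and $\Gamma^{(0)}, \dots, \Gamma^{(m)}$ are the corresponding level sets. For $0 \le l \le m-1$ pick $p_l \in \Gamma^{(l)}$ and $q_l \in \Gamma^{(l+1)}$ from Lemma~\ref{lmaOscBetweenCells}. Then for arbitrary $x \in \Gamma_i$ and $y \in \Gamma_j$,
$$
\varphi(x) - \varphi(y) = \bigl(\varphi(x) - \varphi(p_0)\bigr) + \sum_{l=0}^{m-1}\bigl(\varphi(p_l) - \varphi(q_l)\bigr) + \sum_{l=0}^{m-2}\bigl(\varphi(q_l) - \varphi(p_{l+1})\bigr) + \bigl(\varphi(q_{m-1}) - \varphi(y)\bigr),
$$
where every bracketed difference of points lying in a single $\Gamma^{(l)}$ is controlled by the oscillation bound on that level set (covering $\Gamma^{(0)},\dots,\Gamma^{(m)}$ once each), and each $\varphi(p_l) - \varphi(q_l)$ by the neighbouring-cell bound. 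Summing, each $\sqrt{\alpha_k}$ appears at most three times, so $\abs{\varphi(x) - \varphi(y)} \leq C\frac{(\log A)^{1/2}}{A^{1/4}} \sum_{l=0}^m \sqrt{\alpha_{k_l}}$. Since the level sets are compact and $x,y$ were arbitrary, the left side dominates $\abs{\sup_{\Gamma_i}\varphi - \inf_{\Gamma_j}\varphi}$, which is the claim.

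There is no deep obstacle here: the analytic content is entirely in the two preceding lemmas. The only points requiring care are (i) checking that with $\delta_0 = 1/\sqrt A$ the two contributions in Lemma~\ref{lmaOscBetweenCells} collapse to the single scale $(\log A)^{1/2}/A^{1/4}$ — in particular that the $C\delta_0\int\abs{\grad\varphi}^2 = O(A^{-1/2})$ term is absorbed since $\log A \ge 1$ for large $A$; (ii) verifying that each cell energy $\alpha_k$ is invoked only a bounded (at most three) number of times along the chain, so that no factor depending on the path length $m$ appears; and (iii) noting that the path being built from axis-parallel segments guarantees consecutive cells are genuine neighbours, so Lemma~\ref{lmaOscBetweenCells} is legitimately applicable at every step.
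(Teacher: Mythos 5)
Your proof is correct and is exactly the telescoping argument the paper intends when it says the proposition follows "immediately" from Lemmas~\ref{lmaOscOnSep} and~\ref{lmaOscBetweenCells} with $\delta_0 = 1/\sqrt{A}$; the paper gives no explicit proof, so yours simply fills in those details. Your bookkeeping of the $\alpha_k$ multiplicities (each appearing at most three times), the collapse of the two terms in Lemma~\ref{lmaOscBetweenCells} to the single scale $(\log A)^{1/2}/A^{1/4}$, and the use of axis-parallel paths so consecutive cells share an edge are all precisely what is needed.
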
%}}}

Lemma~\ref{lmaHeinz} implies that 
\begin{equation}\label{071002}
\sum_{j}\alpha_j\leq C\lambda\|\varphi\|_{L^2(D)}^2,
\end{equation}
with the summation taken over all cells in $D$.
Now, the key to the proof of the lower bound in Theorem~\ref{thmEvalStrongFlow} is to obtain an 
estimate on $\norm{\varphi}_{L^\infty(\Gamma_{i})}$ in terms of $(\sum_{i} \alpha_{i})^{1/2}$. 
A direct application of Cauchy-Schwartz to Proposition~\ref{ppnTotalOsc} is wasteful and does not yield a good enough estimate. 
What is required is a more careful estimate of the `worst case scenario' for the values of $\alpha_i$. 
This is the content of our next Lemma.
\begin{lemma}\label{lmaGI}%{{{
On any cell $Q_i$, we have
$$
\sup_{x \in \Gamma_i} \abs{\varphi(x)}^2
    \leq C \frac{\log A \log L}{A^{1/2}} \sum_{\text{all cells}} \alpha_i
    \leq C \frac{\log A \log L}{A^{1/2}} \lambda \norm{\varphi}_{L^2(D)}^2
$$
where $\Gamma_i \subset Q_i$ is the level set from Lemma~\ref{lmaOscOnSep}.
\end{lemma}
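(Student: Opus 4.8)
The plan is to convert the pointwise bound into a question about finding a cheap path of cells from $Q_i$ to the boundary, and then to answer that question by averaging over radially directed paths.

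\emph{Step 1: reduction to a path problem.} Fix a cell $Q_i$, let $\Gamma_i$ be the level set supplied by Lemma~\ref{lmaOscOnSep}, and abbreviate $S=\sum_{\text{all cells}}\alpha_k$. If $Q_j$ is any boundary cell and $P$ is any path of cells joining $Q_i$ to $Q_j$ by horizontal and vertical segments, Proposition~\ref{ppnTotalOsc} gives
\[
\sup_{\Gamma_i}\varphi \;\leq\; \inf_{\Gamma_j}\varphi \;+\; C\,\frac{(\log A)^{1/2}}{A^{1/4}}\sum_{k\in P}\sqrt{\alpha_k}.
\]
Since $Q_j$ meets $\del D$, where $\varphi=0$, and since $\Gamma_j=\{H=h_j\}$ with $\abs{h_j}\leq 2\delta_0=2/\sqrt A$ comes within distance $O(1/\sqrt A)$ of the side of $Q_j$ lying on $\del D$ (there $\abs H=O(\delta_0)$ and $\abs{\grad H}\gtrsim 1$ away from the corners), a one–dimensional Poincaré inequality across that thin strip yields $\inf_{\Gamma_j}\varphi\leq C\sqrt{\delta_0\,\alpha_j}=C\sqrt{\alpha_j}/A^{1/4}\leq C\sqrt S/A^{1/4}$. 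Using $\varphi>0$ in $D$, so that $\sup_{\Gamma_i}\abs\varphi=\sup_{\Gamma_i}\varphi$, and squaring, we obtain
\[
\sup_{\Gamma_i}\abs{\varphi}^2 \;\leq\; C\,\frac{S}{\sqrt A} \;+\; C\,\frac{\log A}{\sqrt A}\,\Bigl(\min_{P}\ \sum_{k\in P}\sqrt{\alpha_k}\Bigr)^{2},
\]
the minimum being over all horizontal/vertical cell–paths $P$ from $Q_i$ to $\del D$. Because $S\leq C\lambda\norm{\varphi}_{L^2(D)}^2$ by~\eqref{071002}, it now suffices to exhibit one such path with $\sum_{k\in P}\sqrt{\alpha_k}\leq C\sqrt{S\log L}$.

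\emph{Step 2: the combinatorial estimate (the crux).} The naive choice — a shortest path, of length $O(L)$, estimated by Cauchy–Schwarz — only gives $\sum_P\sqrt{\alpha_k}\leq\sqrt L\,\sqrt S$, losing a factor $\sqrt{L/\log L}$; this is the wasteful bound alluded to above. Instead I would pick the path at random. Let $N\leq L/2$ be the number of cells between $Q_i$ and $\del D$; for $r\geq 1$ let $R_r$ be the ring of cells at Chebyshev distance exactly $r$ from $Q_i$, so $\abs{R_r}\leq 8r$, and set $\mu_r=\sum_{k\in R_r}\alpha_k$, so $\sum_{r\geq 0}\mu_r=S$. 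Pick a uniformly random direction $\theta\in[0,2\pi)$ and let $P(\theta)$ be the digital staircase of cells obtained by discretising the ray from the centre of $Q_i$ in direction $\theta$, stopped at the first cell meeting $\del D$; this is a legitimate horizontal/vertical cell–path. The key geometric point is that a cell at Chebyshev distance $r$ from $Q_i$ subtends an angle $O(1/r)$ at $Q_i$, so such a cell $k$ lies on $P(\theta)$ only for $\theta$ in an arc of measure $O(1/r)$; hence $f_k:=\mathbb P(k\in P(\theta))\leq C/r$ for every cell $k$ at distance $r$. Therefore, up to the harmless term $\sqrt{\alpha_i}\leq\sqrt S$ coming from $Q_i$ itself,
\begin{align*}
\mathbb{E}_\theta \sum_{k\in P(\theta)}\sqrt{\alpha_k}
&= \sum_k \sqrt{\alpha_k}\, f_k
\;\leq\; C\sum_{r=1}^{N}\frac{1}{r}\sum_{k\in R_r}\sqrt{\alpha_k}
\;\leq\; C\sum_{r=1}^{N}\frac{\sqrt{\abs{R_r}}}{r}\,\sqrt{\mu_r}\\
&\leq\; C\sum_{r=1}^{N}\sqrt{\frac{\mu_r}{r}}
\;\leq\; C\Bigl(\sum_{r=1}^{N}\mu_r\Bigr)^{1/2}\Bigl(\sum_{r=1}^{N}\frac1r\Bigr)^{1/2}
\;\leq\; C\sqrt{S\log L}.
\end{align*}
In particular some direction $\theta$ yields a path $P=P(\theta)$ with $\sum_{k\in P}\sqrt{\alpha_k}\leq C\sqrt{S\log L}$, which is exactly what Step~1 needs.

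\emph{Step 3: conclusion.} Feeding this path into the inequality of Step~1 gives $\sup_{\Gamma_i}\abs{\varphi}^2\leq C\,(\log A\log L)\,S/\sqrt A$, and~\eqref{071002} replaces $S$ by $C\lambda\norm{\varphi}_{L^2(D)}^2$; since the argument is identical for every cell, the lemma follows. The main obstacle is Step~2: one must exploit the fact that when $Q_i$ is deep inside $D$ there are many essentially parallel routes to the boundary, and the averaging over radial directions is precisely the device that converts this abundance into the improvement from $\sqrt L$ down to $\sqrt{\log L}$ — the two ingredients being the angular bound $f_k=O(1/r)$ and the elementary inequality $\sum_r\sqrt{\mu_r/r}\leq(\sum_r\mu_r)^{1/2}(\sum_r 1/r)^{1/2}$, the harmonic series producing the logarithm.
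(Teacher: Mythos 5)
Your proposal is correct and, at the strategic level, it is the paper's strategy: both proofs bound the minimum over staircase cell-paths by the \emph{average} of $\sum\sqrt{\alpha_k}$ over a large family of roughly radial paths, and both obtain the $\sqrt{\log L}$ improvement over naive Cauchy--Schwarz from the fact that a cell at distance $r$ from the centre is visited by only a $O(1/r)$ fraction of the family, so the harmonic series produces the logarithm. The implementations differ, however, in ways worth noting. The paper works entirely combinatorially: it builds an explicit multiset of $(L+1)!$ staircase paths, all staying in the upper-right quadrant and heading to the top boundary, so that a cell at $\ell^1$-distance $k$ from $Q_{i_0,j_0}$ (with $k\le L$) is visited by exactly $(L+1)!/(k+1)$ of them; then it maximizes the resulting averaged sum over all $\beta$ with $\sum\beta_{ij}=\sigma$ by a Lagrange-multiplier computation, yielding the worst case $\beta_{ij}\sim 1/(i+j+1)^2$ and the bound $C\sqrt{\sigma\log L}$. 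You instead use a continuous randomization --- a uniformly random direction and the digital staircase of the resulting ray --- prove the hitting bound $f_k=O(1/r)$ by a direct angular-measure argument, and extract the estimate with two applications of Cauchy--Schwarz (first across each ring, then across the rings), avoiding the optimization altogether. Your version is arguably cleaner and more self-evidently sharp; the paper's pays with an extra computation but exhibits the extremizing $\beta$ explicitly. A second, more minor difference: you make explicit the step linking the last cell of the path to $\del D$ (the one-dimensional Poincar\'e estimate $\inf_{\Gamma_j}\varphi\lesssim\sqrt{\delta_0\,\alpha_j}$ across the thin outer strip, using $\varphi=0$ on $\del D$), whereas the paper folds this silently into its appeal to Proposition~\ref{ppnTotalOsc} together with the choice that paths terminate at $\del D$; your spelling it out is a genuine improvement in rigor, and the estimate you supply is correct.
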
%}}}
\begin{proof}%{{{
For notational convenience, in this proof only, we will assume  that 
$D = (-L - \frac{1}{2}, L + \frac{1}{2})^2$ is the square of side length $2L + 1$ centered $(0,0)$, 
and $Q_{i,j} = \{ (x,y) \st x \in [i - \frac{1}{2}, i + \frac{1}{2}), y \in [j - \frac{1}{2}, j+ \frac{1}{2})\}$ is the cell with center $(i,j)$.
Note that in the present proof we label the cells, (and contours $\Gamma_{ij}$ inside the cell $Q_{ij}$ we use below)
by  two indices that correspond to the
coordinates of the center of the cell.

Let $\mathfrak G_{i_0,j_0}$ denote the set of all paths of cells that join the boundary
$\del D$ to the cell $Q_{i_0,j_0}$ using only horizontal and vertical line segments. Let $\beta = (\beta_{i,j}) \in \R^{(2L+1)^2}$, $\beta_{i,j} \geq 0$, be a collection of non-negative
numbers assigned to each cell, and denote
\begin{equation}\label{eqnQ1}
q_{i_0,j_0}(\beta) = \min_{\mathfrak g\in \mathfrak G_{i_0, j_0}} \sum_{(i,j) \in \mathfrak g} \sqrt{\beta_{i,j}}.
\end{equation}
We first claim there exists an explicitly computable constant $C$, independent of $L, \beta, i_0, j_0$ such that
\begin{equation}\label{eqnQbd}
q_{i_0,j_0}(\beta)^2 \leq C  \log L\sum_{i,j = -L}^L   \beta_{i,j},
\end{equation}
which is an obvious improvement over the Cauchy-Schwartz estimate applied blindly to \eqref{eqnQ1}. This improvement
comes because we are taking the minimum over all such paths in \eqref{eqnQ1}.

To prove~\eqref{eqnQbd}, we define
$$
\qavg_{i_0, j_0}(\beta, \mathfrak G'_{i_0, j_0}) = \frac{1}{\abs{\mathfrak G'_{(i_0,j_0)}} } \sum_{\mathfrak g \in \mathfrak G'_{i_0,j_0}} \sum_{(i,j) \in \mathfrak g} \sqrt{\beta_{i,j}}.
$$
where $\mathfrak G'_{(i_0, j_0)}$ is any collection of (possibly repeated) paths in $\mathfrak G_{(i_0, j_0)}$. Since the minimum of a collection of numbers is not bigger than the average of any subset, we certainly have
$$
q_{i_0, j_0}(\beta) \leq \qavg_{i_0, j_0}(\beta, \mathfrak G'_{i_0, j_0})
$$
for any collection $\mathfrak G'_{i_0, j_0}$. The idea is to choose such a sub-collection in a convenient way.

We prove the claim for $(i_0, j_0) = (0,0)$. We choose $\mathfrak G'_{0,0}$ to consist of $(L+1)!$ paths, with the following 
property. All paths stay in the upper-right quadrant.
The last cell visited by all paths is $(0,0)$. The second to last cell visited by ${(L+1)!}/{2}$ paths (half of the collection)
is $(1,0)$, and the second to last cell visited by the remaining paths is $(0,1)$. Amongst the paths who's second to last cell is 
$(1,0)$, we choose $\mathfrak G'_{0,0}$ so that two thirds of these paths have $(2,0)$ as the third to last cell, and one third 
have $(1,1)$ as the third to last cell. Symmetrically, we choose $\mathfrak G'_{0,0}$ so that amongst all the paths who's second to
last cell is $(0,1)$, two thirds of these paths have $(0,2)$ as the third last cell, and one third have $(1,1)$ as the third to last cell.
Consequently exactly ${(L+1)!}/{3}$ paths have third to last cell $(2,0)$, exactly ${(L+1)!}/{3}$ paths in $\mathfrak G'_{0,0}$ 
have third to last cell $(1,1)$, and exactly ${(L+1)!}/{3}$ paths in $\mathfrak G'_{0,0}$ have third to last cell $(0,2)$.

Continuing similarly, we see that $\mathfrak G'_{0,0}$ can be chosen so that for any cell $(i,j)$ with $i + j \leq L$, 
\textit{exactly} ${(L+1)!}/{(i+j+1)}$ paths visit the cell $(i,j)$ as the $(i+j+1)^\text{th}$ to last cell. 
Finally, we assume that all paths in $\mathfrak G'_{0,0}$ start on the top boundary and proceed directly 
vertically downward until they hit a cell of the form $(k, L -k)$.

Let us count how many times each term $\sqrt{\beta_{i,j}}$ appears in the averaged sum 
$\qavg_{0,0}( \beta, \mathfrak G'_{0,0} )$. Clearly, if $i + j \leq L$, then the cell $(i,j)$ appears in exactly 
$\frac{(L+1)!}{i + j + 1}$ paths in $\mathfrak G'_{0,0}$.  On the other hand, if $i + j > L$, then the cell $(i,j)$ appears 
exactly $\frac{(L+1)!}{L + 1}$ paths. Consequently, we have
\begin{equation}\label{071212}
\qavg_{0,0}( \beta, \mathfrak G'_{0,0}) = \sum_{k=0}^{L-1} \frac{1}{k+1} \sum_{i = 0}^{k} \sqrt{\beta_{i, k - i}} +
 \frac{1}{L+1}  \sum_{i=0}^L \sum_{j=L-i}^L \sqrt{\beta_{i,j}}.
\end{equation}
We now maximize the sum in \eqref{071212} with the constraint
\begin{equation}\label{071214}
\sum_{i,j=0}^L\beta_{ij}=\sigma.
\end{equation}
Let $S$ denote the right side of \eqref{071212}, then at the maximizer of $S$ we have
\[
\frac{\del S}{\del\beta_{ij}}=\frac{1}{2(L+1)\sqrt{\beta_{ij}} },~~\hbox{ for $i+j> L$},
\]
and
\[
\frac{\del S}{\del\beta_{ij}}=\frac{1}{2(i+j+1)\sqrt{\beta_{ij}} },~~\hbox{ for $i+j\leq L$}.
\]
The Euler-Lagrange equations now imply that
\[
\beta_{ij}=\frac{\gamma}{4(L+1)^2}~~\hbox{ for $i+j> L$},
\]
and
\[
\beta_{ij}=\frac{\gamma}{4(i+j+1)^2}~~\hbox{ for $0\leq i+j\leq L$}.
\]
Here $\gamma$ is the Lagrange multiplier that can be computed from the constraint \eqref{071214}:
\[
\frac{\gamma(L+1)(L+2)}{4(L+1)^2}+\gamma\sum_{j=0}^{L-1}\frac{(j+1)}{4(j+1)^2}=\sigma.
\]
It follows that 
\[
\gamma=\sigma \bar\gamma(L),
\]
with $\bar\gamma(L)=O(1/\log L)$ as $L\to+\infty$. Hence, for the maximizer we get
\[
S\leq \frac{C\sqrt{\sigma}}{\sqrt{\log L}}\sum_{k=0}^{L-1}\frac{1}{k+1}+C\sqrt{\gamma}=C\sqrt{\sigma\log L},
\]
and thus \eqref{eqnQbd} holds for $i_0,j_0=(0,0)$. However, it is immediate to see that 
the previous argument can be applied to any cell considering appropriate collection of paths that say up and to the right of $(i_0,j_0)$,
whence \eqref{eqnQbd} holds for all $(i_0,j_0)$.

With \eqref{eqnQbd} in hand, we observe that Proposition~\ref{ppnTotalOsc} implies
\[
\norm{ \varphi }_{L^\infty(\Gamma_{i_0,j_0})}^2 \leq C \frac{\log A}{\sqrt{A}} q_{i_0,j_0}(\alpha)^2 
\leq C \frac{\log A}{\sqrt{A}} \log L \sum_{i=-L}^L \sum_{j=-L}^L \alpha_{i,j}
\leq C \frac{\log A \log L}{\sqrt{A}} \lambda \norm{\varphi}_{L^2}^2,
\]
where the last inequality follows from Lemma~\ref{lmaHeinz}. This concludes the proof.
\end{proof}%}}}

Our next step shows that the mass of $\varphi^2$ in the regions enclosed by the level sets $\Gamma_i$ is comparable to $\norm{\varphi}_{L^2(D)}^2$.
\begin{lemma}\label{lmaGI2}%{{{
Let $Q_i$ be a cell, and $\Gamma_i \subset Q_i$ the level set from Lemma~\ref{lmaOscOnSep}. Let $h_i = H(\Gamma_i)$, and $S_i = Q_i \cap \{\abs{H} < \abs{h_i} \}$ be a neighbourhood of $\del Q_i$. Let $Q_i' = Q_i - S_i$. Then, for $A$ sufficiently large, we have
\begin{equation}\label{071302}
\sum_i \norm{\varphi}_{L^2(Q_i')}^2 \geq \frac{1}{2} \norm{\varphi}_{L^2(D)}^2
\end{equation}
\end{lemma}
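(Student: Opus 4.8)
The plan is to show that the "boundary layers" $S_i$, where $\varphi$ is controlled by Lemma~\ref{lmaGI}, carry at most half the total mass of $\varphi^2$. Since $D = \bigcup_i (Q_i' \cup S_i)$ is a disjoint decomposition, it suffices to prove $\sum_i \norm{\varphi}_{L^2(S_i)}^2 \leq \tfrac12 \norm{\varphi}_{L^2(D)}^2$ for $A$ large. The key point is that each $S_i$ is a thin strip of width comparable to $\delta_0 = 1/\sqrt{A}$ around $\del Q_i$ (this follows because $\abs{H(\Gamma_i)} < 2\delta_0$ and $\abs{\grad H} \approx 1$ near $\del Q_i$), so $\abs{S_i} \leq C\delta_0$, and on $S_i$ the eigenfunction is small.

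The main steps, in order, are as follows. First, I would fix a cell $Q_i$ and decompose $S_i$ into its portion near each of the four edges (and corners); on each such piece, for a point $x$ at distance $r$ from $\del D$-side of the cell, write $\varphi(x) = \int_0^r \del_n \varphi \, dn$ using the Dirichlet-type vanishing — more precisely, integrate along the streamline direction transverse to $\del Q_i$ from the separatrix $\Gamma_i$ inward, combining the pointwise bound $\abs{\varphi}_{L^\infty(\Gamma_i)}$ from Lemma~\ref{lmaGI} with a $1$D Cauchy–Schwarz estimate $\abs{\varphi(x) - \varphi(x')}^2 \leq C\delta_0 \int \abs{\grad\varphi}^2$ along the transverse segment (exactly the mechanism already used in Lemma~\ref{lmaOscBetweenCells}). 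Integrating the square over $S_i$ then yields
\[
\norm{\varphi}_{L^2(S_i)}^2 \leq C \delta_0 \norm{\varphi}_{L^\infty(\Gamma_i)}^2 + C \delta_0^2 \int_{Q_i} \abs{\grad\varphi}^2.
\]
Second, I would sum over all cells: using Lemma~\ref{lmaGI} for the first term, $\sum_i \delta_0 \norm{\varphi}_{L^\infty(\Gamma_i)}^2 \leq C L^2 \delta_0 \cdot \frac{\log A \log L}{\sqrt A}\lambda \norm{\varphi}_{L^2(D)}^2$; since there are $O(L^2)$ cells and $\delta_0 = A^{-1/2}$, and using the upper bound $\lambda \leq \lambda_1$ from Section~\ref{sxnStrongFlowUpperBound}, this is $\leq C \frac{L^2 \log A \log L}{A}\norm{\varphi}_{L^2(D)}^2$, which is $o(1)$ precisely under assumption~\eqref{eqnStrongFlowAssumption}. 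For the gradient term, $\sum_i \delta_0^2 \int_{Q_i}\abs{\grad\varphi}^2 = \delta_0^2 \lambda \norm{\varphi}_{L^2(D)}^2 \leq C \lambda_1 A^{-1}\norm{\varphi}_{L^2(D)}^2 = o(1)\norm{\varphi}_{L^2(D)}^2$. Hence $\sum_i \norm{\varphi}_{L^2(S_i)}^2 \leq \tfrac12 \norm{\varphi}_{L^2(D)}^2$ for $A$ large, and \eqref{071302} follows.

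The step I expect to be the main obstacle is the local estimate of $\norm{\varphi}_{L^2(S_i)}^2$ near the corners of $Q_i$, where the curvilinear coordinates $(h,\theta)$ from~\eqref{eqnThetaDef} degenerate ($\abs{\grad H}$ and $\abs{\grad\Theta_i}$ are no longer $\approx 1$). Away from corners the strip is genuinely of width $\approx \delta_0$ and the transverse integration is clean; near a corner one must either argue that the corner contributes a region of area $O(\delta_0^2 \log(1/\delta_0))$ (small enough to be absorbed even with only an $L^\infty$ bound on $\varphi$ coming from standard elliptic estimates, or from the global bound on $\norm{\varphi}_{L^\infty}$ implied by the drift-independent estimates cited in the paper), or handle it with the same Jacobian bookkeeping used in Lemma~\ref{lmaOscOnSep}. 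A secondary technical point is making precise the claim that $S_i$ has width $O(\delta_0)$: this uses $\abs{H(\Gamma_i)} < 2\delta_0$ together with the nondegeneracy $\abs{\grad H}\geq c > 0$ on $Q_i \setminus (\text{corner neighborhoods})$, so the level set $\{\abs{H} = \abs{h_i}\}$ lies within distance $C\delta_0$ of $\del Q_i$ except near corners, where one again uses an area bound. Everything else is routine Cauchy–Schwarz and summation.
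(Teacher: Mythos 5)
Your proposal is correct in outline but takes a genuinely different and less efficient route than the paper, and the two differ in ways worth noting.

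The paper's proof does not integrate pointwise along transverse segments and does not invoke Lemma~\ref{lmaGI} at all. Instead, it uses the Sobolev restriction (trace) theorem on each cell: for each level $h$,
\[
\int_{H = h} \abs{\varphi(h, \theta)}^2 \frac{d\theta}{\abs{\grad \Theta}} \leq C \norm{\varphi}_{H^1(Q_i)}^2 = C\bigl( \alpha_i + \norm{\varphi}_{L^2(Q_i)}^2 \bigr),
\]
and then, writing $\norm{\varphi}_{L^2(S_i)}^2$ in curvilinear coordinates, pulls out $\sup_\theta \abs{\grad H}^{-1} \leq C/\sqrt{h}$ and integrates $\int_0^{2\delta_0} h^{-1/2}\,dh = C\sqrt{\delta_0}$. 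Summing over cells and using $\norm{\grad \varphi}_{L^2(D)}^2 = \lambda \norm{\varphi}_{L^2(D)}^2$ together with only the elementary upper bound $\lambda \leq \lambda_1$ from Section~\ref{sxnStrongFlowUpperBound} gives $\sum_i \norm{\varphi}_{L^2(S_i)}^2 \leq C\sqrt{\delta_0}\,\norm{\varphi}_{L^2(D)}^2$, which tends to zero as $A \to \infty$ regardless of $L$. In contrast, your estimate of the first term picks up a factor of $L^2$ (the number of cells, since Lemma~\ref{lmaGI} produces the same global quantity for each cell), and you must then invoke assumption~\eqref{eqnStrongFlowAssumption} to cancel it. The paper's bound is weaker per cell ($\sqrt{\delta_0}$ rather than your hoped-for $\delta_0$), but it never produces an $L^2$ factor, so the argument is cleaner, more self-contained, and proves the lemma independently of the scaling between $A$ and $L$.

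This also resolves the corner difficulty you flagged: the paper never needs a pointwise transverse-segment estimate at all. The degeneracy of $\abs{\grad H}$ near corners enters only through the factor $\sup_\theta \abs{\grad H}^{-1} \sim h^{-1/2}$, and this singularity is integrable in $h$; the $\theta$-integration is handled entirely by the trace inequality. Your instinct that the corners are the delicate point in a direct transverse-integration argument is right — the Jacobian $1/(\abs{\grad\Theta}\abs{\grad H})$ and the transverse arc length both blow up there, and the claimed $\delta_0^2$ coefficient on the gradient term is optimistic near corners — but the paper's route simply avoids having to make that argument.
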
%}}}
\begin{proof}%{{{
For any cell $Q_i$, the Sobolev restriction theorem  shows
\begin{equation*}
\int_{H = h} \abs{\varphi(h, \theta)}^2 \, \frac{d\theta}{\abs{\grad \Theta}} = \norm{\varphi}_{L^2(H\inv(h) \cap Q_i)}^2
\leq C \norm{\varphi}_{H^1(Q_i)}^2 = C\left( \alpha_i + \norm{\varphi}_{L^2(Q_i)}^2 \right).
\end{equation*}
Thus, using curvilinear coordinates with respect to the cell $Q_i$, and assuming, for simplicity, that $h_i = H(\Gamma_i) > 0$, gives
\begin{align*}
\norm{\varphi}_{L^2(S_i)}^2
    &= \int_{h = 0}^{h_i} \int_{\theta = 0}^{2\pi} \varphi(h, \theta)^2 \frac{1}{\abs{\grad \Theta} \abs{\grad H}} \,  d\theta \,dh\\
    &\leq \int_{h=0}^{2 \delta_0} \left( \int_{\theta = 0}^{2\pi} \varphi(h, \theta)^2 \frac{1}{\abs{\grad \Theta}} \, d\theta\right) \left(\sup_{\theta \in [0, 2\pi]} \frac{1}{\abs{\grad H}}\right) \, dh\\
    &\leq C\left( \alpha_i + \norm{\varphi}_{L^2(Q_i)}^2 \right) \int_{h=0}^{2 \delta_0} \frac{1}{\sqrt{h}} \, dh= C \sqrt{\delta_0} \left( \alpha_i + \norm{\varphi}_{L^2(Q_i)}^2 \right).
\end{align*}
Summing over all cells gives
\[
\sum_i \norm{\varphi}_{L^2(S_i)}^2 \leq C \sqrt{\delta_0} \left( \norm{\grad \varphi}_{L^2(D)}^2 + \norm{\varphi}_{L^2(D)}^2 \right)
\leq C \sqrt{\delta_0} \left( 1 + \lambda \right) \norm{\varphi}_{L^2(D)}^2 \leq C \sqrt{\delta_0}  \norm{\varphi}_{L^2(D)}^2
\]
where the last inequality follows using the upper bound in~\eqref{eqnStrongFlowEvalBounds} which was proved in Section~\ref{sxnStrongFlowUpperBound}. Since $\delta_0 \to 0$ as $A \to \infty$, and
$$
\norm{\varphi}_{L^2(D)}^2 = \sum_i \norm{\varphi}_{L^2(S_i)}^2 + \sum_i \norm{\varphi}_{L^2(Q_i')}^2,
$$
inequality \eqref{071302} follows.
\end{proof}%}}}

Our final ingredient is a drift \textit{independent} $L^p \to L^\infty$ estimate in~\cite{bblBerestyckiKiselevNovikovRyzhik}. We recall it here for convenience.
\begin{lemma}[Lemma 1.3 in \cite{bblBerestyckiKiselevNovikovRyzhik}]\label{lmaDriftIndLinfBound}%{{{
Let $\Omega \subset \R^d$ be a domain, $w$ be divergence free, and $\theta$ be the solution to
\begin{equation*}
\left\{
\begin{aligned}
-\lap \theta + w \cdot \grad \theta &= f &&\text{in } \Omega\\
\theta &= 0 &&\text{on }\del \Omega,
\end{aligned}\right.
\end{equation*}
with $f \in L^p(\Omega)$ for some $p > d$. There exists a constant $c = c(\Omega, d, p) > 0$, \emph{independent of $w$}, such that $\norm{\theta}_{L^\infty} \leq c \norm{f}_{L^p}$.
\end{lemma}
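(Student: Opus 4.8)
The content of the lemma is that the incompressibility of $w$ makes the drift invisible to every energy estimate, so the plan is to run a Stampacchia truncation argument (equivalently a Moser iteration) as though $w\equiv0$, while keeping track of the fact that every constant that appears is $w$-free. First I would check that $\theta$ is well defined in $H^1_0(\Omega)$: the bilinear form $a(\psi,\phi)=\int_\Omega\grad\psi\cdot\grad\phi+\int_\Omega(w\cdot\grad\psi)\phi$ satisfies $a(\psi,\psi)=\int_\Omega|\grad\psi|^2$, because $\int_\Omega(w\cdot\grad\psi)\psi=\tfrac12\int_\Omega w\cdot\grad(\psi^2)=-\tfrac12\int_\Omega(\divergence w)\psi^2=0$ (the boundary term drops since $\psi\in H^1_0$, so no condition on $w\cdot\nu$ is needed). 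Hence $a$ is coercive with a constant depending only on $\Omega$ via Poincar\'e, independent of $w$, and Lax--Milgram gives existence and uniqueness. Testing the equation against $\theta$ itself and using H\"older and Sobolev already yields the preliminary bound $\|\grad\theta\|_{L^2(\Omega)}\le C(\Omega,p)\|f\|_{L^p(\Omega)}$, again with a $w$-free constant.

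Next I would do the truncation. For $k\ge0$ put $\theta_k=(|\theta|-k)_+\sign\theta\in H^1_0(\Omega)$ and $A(k)=\{|\theta|>k\}$. Testing the equation against $\theta_k$, the drift term is $\int_\Omega(w\cdot\grad\theta)\theta_k=\int_\Omega(w\cdot\grad\theta_k)\theta_k=\tfrac12\int_\Omega w\cdot\grad(\theta_k^2)=0$ exactly as above, so $\|\grad\theta_k\|_{L^2}^2=\int_{A(k)}f\,\theta_k$ --- precisely the inequality one writes for $w\equiv0$. H\"older together with the Sobolev inequality $\|\theta_k\|_{L^{2^*}}\le C_S\|\grad\theta_k\|_{L^2}$ (with $2^*$ replaced by an arbitrary large finite exponent when $d\le2$) gives $\|\theta_k\|_{L^{2^*}}\le C\|f\|_{L^p}|A(k)|^{1/p'-1/2^*}$, and then from $(h-k)|A(h)|^{1/2^*}\le\|\theta_k\|_{L^{2^*}}$ for $h>k$ I obtain
$$
|A(h)|\le\frac{\big(C\|f\|_{L^p}\big)^{2^*}}{(h-k)^{2^*}}\,|A(k)|^{\beta},\qquad \beta=2^*\Big(\tfrac1{p'}-\tfrac1{2^*}\Big)>1,
$$
the inequality $\beta>1$ being where $p>d$ enters (in fact $p>d/2$ suffices). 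The classical Stampacchia iteration lemma then forces $|A(k)|=0$ for all $k\ge k_0$ with $k_0\le c(\Omega,d,p)\|f\|_{L^p}$, i.e.\ $\|\theta\|_{L^\infty(\Omega)}\le c\|f\|_{L^p(\Omega)}$ with $c$ depending only on the Sobolev constant of $\Omega$, on $d$ and on $p$, never on $w$.

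The main obstacle is not the iteration, which is routine once the drift has dropped out, but justifying the test-function manipulations under the sole hypothesis that $w$ is divergence free: \emph{a priori} $w\theta$ need not lie in $L^2$, so I would first prove the estimate for smooth bounded divergence-free $w$ (and smooth $\Omega$, smooth $f$), where every integration by parts is classical and the constant is already $w$-independent, and then remove the extra regularity by approximation, passing to the limit with the help of the uniform bound. A secondary nuisance is the borderline Sobolev embedding when $d=2$, which forces the exponent bookkeeping to be redone with $2^*$ replaced by a large finite exponent; this is the only place where assuming $p>d$ rather than the sharp $p>d/2$ is genuinely convenient.
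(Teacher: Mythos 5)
The paper does not give a proof of this lemma; it is quoted verbatim as Lemma~1.3 of \cite{bblBerestyckiKiselevNovikovRyzhik} and used as a black box. Your Stampacchia/De~Giorgi truncation argument is correct and is the standard proof of this drift-independent bound: the only nonstandard point is the observation that for $\divergence w=0$ the drift contribution $\int_\Omega(w\cdot\grad\theta_k)\theta_k$ vanishes for \emph{every} truncation $\theta_k\in H^1_0(\Omega)$, after which the iteration runs exactly as in the drift-free case and all constants come from the Sobolev embedding alone. Your side remarks are also accurate --- $p>d/2$ is the sharp threshold, $d=2$ needs a large finite Sobolev exponent in place of $2^*$, and the boundary term drops because $\theta\in H^1_0(\Omega)$ rather than because of any tangency of $w$ --- and the last point is precisely what lets the authors of this paper transfer the lemma to the mean-zero periodic setting in the remark following~\eqref{eqnPhiBoundbis}.
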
%}}}

We are now ready to prove the lower bound in Theorem~\ref{thmEvalStrongFlow}.
\begin{proof}[Proof of the lower bound in Theorem~\ref{thmEvalStrongFlow}]%{{{
Using the notation from Lemma~\ref{lmaGI2}, define $D' = \bigcup_i Q_i'$, and let $Q_j$ be a cell such that $\norm{\varphi}_{L^\infty(Q_j')} = \norm{\varphi}_{L^\infty(D')}$. 
Then
\begin{equation}\label{eqnL2DPrime}
\norm{\varphi}_{L^2(D')}^2 = \sum_i\norm{\varphi}_{L^2(Q_i')}^2  \leq L^2 \norm{\varphi}_{L^\infty(Q_j')}^2,
\end{equation}
and it follows from Lemmas~\ref{lmaGI} and~\ref{lmaDriftIndLinfBound} that
\begin{align*}
\norm{\varphi}_{L^\infty(Q_j')}
    &\leq C \left( \lambda \norm{\varphi}_{L^\infty(Q'_i)} + \norm{\varphi}_{L^\infty(\Gamma_j)} \right)\\
    &\leq C \left( \lambda \norm{\varphi}_{L^\infty(Q_j')} + \frac{1}{A^{1/4}} (\log A \log L)^{1/2} \sqrt{\lambda} \norm{\varphi}_{L^2(D)} \right)\\
    &\leq C \left( \lambda \norm{\varphi}_{L^\infty(Q_j')} + \frac{1}{A^{1/4}} (\log A \log L)^{1/2} \sqrt{\lambda} \norm{\varphi}_{L^2(D')} \right)
\end{align*}
where the last inequality follows from Lemma~\ref{lmaGI2}. Consequently,
$$
\lambda \geq \frac{1}{2C} \quad\text{or}\quad \lambda \geq 
\frac{\norm{\varphi}_{L^\infty(Q_j')}^2}{ \norm{\varphi}_{L^2(D')}^2} \frac{A^{1/2}}{2C \log A \log L} \geq 
\frac{A^{1/2}}{2C L^2 \log A \log L}
$$
where the last inequality follows from equation~\eqref{eqnL2DPrime}. This proves the lower bound on 
$\lambda$ in Theorem~\ref{thmEvalStrongFlow}.
\end{proof}%}}}

\section{The exit time in the strong flow regime}\label{sec:exitfast}%{{{1
In this section we sketch the proof of Theorem~\ref{ppnExitTimeStrongFlow}. The techniques in Section~\ref{sxnStrongFlowLowerBound} readily show that oscillation of $\tau$ on stream lines of $v$ becomes small. 
Now, the key observation in the proof of Theorem~\ref{ppnExitTimeStrongFlow} is an \textit{explicit, drift independent} 
upper bound on the exit time. We state this below.

\begin{lemma}[Theorem~1.2 in \cite{bblIyerNovkiovRyzhikZlatos}]\label{lmaDriftIndTauBound}%{{{
Let $\Omega\subset \mathbb{R}^d$ be a bounded, piecewise $C^1$ domain, and $u:\Omega \to \R^n$ a $C^1$ divergence free vector field tangential to $\del \Omega$. Let $\tau'$ be the solution to
\begin{equation}\label{eqnTauPrime}
\left\{
\begin{gathered}
- \lap \tau' + u \cdot \grad \tau' = 1 \quad \text{in } \Omega\\
\tau' = 0 \quad \text{on } \del \Omega,
\end{gathered}\right.
\end{equation}
Then for any $p\in[1,\infty]$,
$$
\norm{\tau'}_{L^p(\Omega)} \leq \norm{\tau'_r}_{L^p(B)},
$$
where $B \subset \R^n$ is a ball with the same Lebesgue measure as $\Omega$, and $\tau'_r$ is the (radial, explicitly computable) solution to~\eqref{eqnTauPrime} on $B$ with $u \equiv 0$.
\end{lemma}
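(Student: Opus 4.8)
The plan is to prove this by Talenti-type symmetrization; the only genuinely new ingredient over the classical torsion-function estimate is that the incompressibility and tangency of $u$ make the first-order term disappear once the equation is integrated over super-level sets. Write $\tau = \tau'$. Since the right-hand side $f\equiv 1$ is nonnegative, the maximum principle gives $\tau\geq 0$, and since $\tau = 0$ on $\del\Omega$, for every $t>0$ the super-level set $\Omega_t=\{\tau>t\}$ is compactly contained in $\Omega$, with $\del\Omega_t\subset\{\tau = t\}$ for a.e.\ $t>0$. Set $\mu(t)=\abs{\Omega_t}$. Integrating $-\lap\tau+u\cdot\grad\tau = 1$ over $\Omega_t$, the drift contribution is
\[
\int_{\Omega_t}u\cdot\grad\tau = \int_{\Omega_t}\divergence(\tau u)\,dx = \int_{\del\Omega_t}\tau\,(u\cdot\nu)\,d\mathcal H^{d-1} = t\int_{\del\Omega_t}u\cdot\nu\,d\mathcal H^{d-1} = t\int_{\Omega_t}\divergence u = 0,
\]
using $\divergence u = 0$ (for the first and last equalities), $\tau\equiv t$ on $\del\Omega_t$, and that for $t>0$ there is no contribution from $\del\Omega$. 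Hence, since the outward normal of $\Omega_t$ equals $-\grad\tau/\abs{\grad\tau}$,
\[
\mu(t) = -\int_{\Omega_t}\lap\tau = \int_{\{\tau=t\}}\abs{\grad\tau}\,d\mathcal H^{d-1}.
\]

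Next I would run the standard symmetrization machinery. The coarea formula gives $-\mu'(t)=\int_{\{\tau=t\}}\abs{\grad\tau}\inv\,d\mathcal H^{d-1}$ for a.e.\ $t$, so Cauchy--Schwarz together with the identity above yields
\[
P(\Omega_t)^2 \leq \left(\int_{\{\tau=t\}}\abs{\grad\tau}\,d\mathcal H^{d-1}\right)\left(\int_{\{\tau=t\}}\frac{d\mathcal H^{d-1}}{\abs{\grad\tau}}\right) = \mu(t)\,\bigl(-\mu'(t)\bigr).
\]
Inserting the isoperimetric inequality $P(\Omega_t)\geq d\,\omega_d^{1/d}\mu(t)^{1-1/d}$ (with $\omega_d=\abs{B_1}$) produces the differential inequality $-\mu'(t)\geq d^2\omega_d^{2/d}\mu(t)^{1-2/d}$ on $0<t<\norm{\tau}_{\linf}$.

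Finally I would convert this into a pointwise bound on the decreasing rearrangement $\tau^*$ on $(0,\abs{\Omega})$. Writing $s=\mu(t)$, so that $(\tau^*)'(s)=1/\mu'(\tau^*(s))$ and $\mu(\tau^*(s))=s$, the inequality becomes $-(\tau^*)'(s)\leq d^{-2}\omega_d^{-2/d}s^{2/d-1}$; integrating from $s$ up to $\abs{\Omega}$, where $\tau^*$ vanishes, gives
\[
\tau^*(s)\leq \frac{1}{2d\,\omega_d^{2/d}}\bigl(\abs{\Omega}^{2/d}-s^{2/d}\bigr).
\]
The radial driftless solution on the ball $B$ of radius $R=(\abs{\Omega}/\omega_d)^{1/d}$ is $\tau'_r(x)=(R^2-\abs{x}^2)/(2d)$, and a direct computation shows its decreasing rearrangement is exactly the right-hand side above. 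Hence $\tau^*\leq(\tau'_r)^*$ on $(0,\abs{\Omega})$, and since rearrangement preserves every $L^p$ norm, $\norm{\tau'}_{L^p(\Omega)}=\norm{\tau^*}_{L^p}\leq\norm{(\tau'_r)^*}_{L^p}=\norm{\tau'_r}_{L^p(B)}$ for all $p\in[1,\infty]$.

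I expect the main obstacle to be purely technical rather than conceptual: justifying the Sard/coarea steps, namely that for a.e.\ $t$ the level set $\{\tau=t\}$ is $(d-1)$-rectifiable with $\grad\tau\neq 0$ $\mathcal H^{d-1}$-a.e.\ on it, that $\mu$ is locally absolutely continuous with the stated derivative, that $\Omega_t$ has finite perimeter equal to $\mathcal H^{d-1}(\{\tau=t\})$ (so the isoperimetric inequality applies), and that no boundary term from $\del\Omega$ intrudes. All of these are standard once one knows $\tau\in C(\bar\Omega)\cap W^{2,p}_\loc(\Omega)$, which follows from elliptic regularity because $u\in C^1$. The incompressibility of the drift is used only in the single computation of the first paragraph; everything after that is Talenti's theorem essentially verbatim.
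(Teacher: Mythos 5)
This lemma is not proved in the paper at all; it is quoted verbatim (as Theorem~1.2) from the reference by Iyer, Novikov, Ryzhik and Zlato\v{s} on exit times for diffusions with incompressible drift. Your Talenti-type symmetrization argument is correct and is in fact the approach taken in that cited paper: the single drift-dependent step is the observation that $\int_{\Omega_t} u\cdot\grad\tau'=0$ on super-level sets because $u$ is divergence-free, tangential to $\del\Omega$, and $\tau'$ is constant on $\del\Omega_t$; once that term drops out, the coarea/Cauchy--Schwarz/isoperimetric chain, the resulting differential inequality for $\mu(t)=\abs{\Omega_t}$, and the comparison with the explicit radial torsion function $\tau'_r(x)=(R^2-\abs{x}^2)/(2d)$ proceed exactly as in Talenti's theorem, yielding $\tau^*\leq(\tau'_r)^*$ and hence the bound for every $L^p$, $p\in[1,\infty]$. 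The technical points you flag at the end (Sard-type genericity of level sets, absolute continuity of $\mu$, finite perimeter of $\Omega_t$) are indeed the only places needing care and are handled in the standard way given $\tau'\in C(\bar\Omega)\cap W^{2,p}_{\loc}(\Omega)$ from elliptic regularity with $u\in C^1$.
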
%}}}

With this, we present the proof of Theorem~\ref{ppnExitTimeStrongFlow}.

\begin{proof}[Proof of Theorem~\ref{ppnExitTimeStrongFlow}]%{{{
Following the same method as that in Section~\ref{sxnStrongFlowLowerBound}, we obtain (analogous to Lemma~\ref{lmaGI})
\begin{equation}\label{eqnTauBound1}
\sup_{x \in \Gamma_i} \abs{\tau(x)}^2 \leq \frac{C \log A \log L}{\sqrt{A}} \sum_{\text{all cells}} \alpha_i,
\end{equation}
with $\alpha_i$ now equal $\alpha_i = \int_{Q_i} \abs{\grad \tau}^2$, where $Q_i$ is the $i^\text{th}$ cell. The sets $\Gamma_i \subset Q_i$ appearing in~\eqref{eqnTauBound1} are level sets of $H$ on which the oscillation of $\tau$ is small (analogous to Lemma~\ref{lmaOscOnSep}).

Now observe that
$$
\sum_{\text{all cells}} \alpha_i = \int_D \abs{\grad \tau}^2 = \int_D \tau,
$$
and so~\eqref{eqnTauBound1} reduces to
\begin{equation}\label{eqnTauBound2}
\sup_{x \in \Gamma_i} \abs{\tau(x)}^2 \leq \frac{C \log A \log L}{\sqrt{A}} \int_D \tau.
\end{equation}

Letting $Q_i'$ be the region enclosed by $\Gamma_i$, we obtain (similar to Lemma~\ref{lmaGI2})
\begin{equation}\label{eqnIntQijPrimeTau}
\int_D \tau \leq 2 \sum_{\text{all cells}} \int_{Q_i'} \tau
\end{equation}
for large enough $A$. By Lemma~\ref{lmaDriftIndTauBound} we see
$$
\int_{Q'_i} \tau \leq C \left(1 + \norm{\tau}_{L^\infty(\Gamma_i)}\right)
$$
and hence
$$
\int_D \tau \leq 
    C L^2 + C L^2 \frac{(\log A \log L)^{1/2}}{A^{1/4}} \left( \int_D \tau \right)^{1/2}.
$$
Solving the above inequality quickly yields
$$
\int_D \tau \leq C \left( L^2 + \frac{L^4}{\sqrt{A}} \log A \log L \right) \leq C L^2,
$$
where the second inequality above follows from the assumption~\eqref{eqnStrongFlowAssumption}. Substituting this in~\eqref{eqnTauBound2} immediately shows that
$$
\norm{\tau}_{L^\infty(\Gamma_i)}^2 \leq C \frac{L^2}{\sqrt{A}} \log A \log L.
$$

Now, to conclude the proof, we appeal to Lemma~\ref{lmaDriftIndTauBound} again. Let $S = D - \cup_i Q_i'$ be the (fattened) skeleton of the separatrices. Observe that $\abs{S} \leq C \frac{L^2}{\sqrt{A}}$ which, by assumption~\eqref{eqnStrongFlowAssumption}, remains bounded uniformly in $A$. Consequently, by Lemma~\ref{lmaDriftIndTauBound},
$$
\norm{\tau}_{L^\infty(S)} \leq C\abs{S}^2 + \norm{\tau}_{L^\infty(\del S)} \leq  C \frac{L^2}{\sqrt{A}} \log A \log L,
$$
which immediately yields the desired result.
\end{proof}%}}}

\section{The exit time in the homogenization regime}\label{sxnHomogExitTime}%{{{1

\subsection*{Exit time from a disk.}%{{{2

The key step in our analysis in the homogenization regime is Proposition~\ref{prop-disk}, and we begin with it's proof. The idea of the proof is to construct good sub and super solutions for the exit time problem in a disk of radius one. Let $\tau$ be the solution of~\eqref{eqnExitTimeProblem} in a ball of radius $L$. Let $B_1$ be a ball of radius $1$, and let~$\tau_1(x) = \tau(Lx)/ L^2$. Then $\tau_1$ is a solution of the PDE
\begin{equation}\label{eqnTau1}
\left\{\begin{aligned}
-\Delta\tau_1+ALv(Lx)\cdot\nabla\tau_1 &= 1	&&\text{in } B_1,\\
\tau_1 & =0 &&\text{on } \del B_1.
\end{aligned}\right.
\end{equation}
We begin by constructing an approximate solution $\tilde\tau_{1}$, by defining
\begin{equation}\label{eqnVarphiDefbis}
\tilde\tau_{1}(x) = \tau_{10}(x) + \frac{1}{L} \tau_{11}(x, y) + \frac{1}{L^2} \tau_{12}(y),
\end{equation}
where $y = Lx$ is the `fast variable'. We define $\tau_{10}$ explicitly by
\begin{equation}\label{eqnVarphi0bis}
\tau_{10}(x) = \frac{1-|x|^2}{2},
\end{equation}
and obtain equations for 
$\tau_{11}$ and $\tau_{12}$ using the standard periodic homogenization
multi-scale expansion. Using the identities
$$
\grad = \grad_x + L \grad_y
\quad\text{and}\quad
\lap = \lap_x + 2 L \grad_x \cdot \grad_y + L^2 \lap_y
$$
we compute
\begin{align*}
-\lap\tilde{\tau} + ALv \cdot \grad \tilde{\tau} =
    &-\lap_x \tau_{10} + AL v \cdot \grad_x \tau_{10}\\
    &+ \frac{1}{L} \bigl(
	\begin{multlined}[t]
	    -\lap_x \tau_{11}  - 2 L \grad_x \cdot \grad_y \tau_{11} - L^2 \lap_y \tau_{11}\\
	    + L A v \cdot \grad_x \tau_{11} + L^2 A v \cdot \grad_y \tau_{11} \bigr)
	\end{multlined}\\
    &+ \frac{1}{L^2} \left( - L^2 \lap_y \tau_{12} + L^2 A v \cdot \grad_y \tau_{12} \right).
\end{align*}
We choose $\tau_{11}$ to formally balance the $O(L)$ terms. That is, 
we define $\tau_{11}$ to be the mean-zero, periodic function such that
\begin{equation}\label{eqnVarphi1bis}
-\lap_y \tau_{11} + A v \cdot \grad_y \tau_{11} = - A v(y) \cdot \grad_x \tau_{10}.
\end{equation}
We clarify that when dealing with functions of the fast variable, we say that a function 
$\theta$ is periodic if $\theta( y_1 + 2, y_2) = \theta(y_1, y_2 + 2) = 
\theta(y_1, y_2)$ for all $(y_1, y_2) \in \R^2$. This is because our drift $v$ is periodic, with period $2$ in the fast variable, and each cell is a square of side length 2, in the fast variable.

Now we choose $\tau_{12}$ to formally balance the $O(1)$ terms. Define $\tau_{12}$ 
to be the mean-zero, periodic function such that
\begin{equation}\label{eqnVarphi2bis}
-\lap_y \tau_{12} + A v \cdot \grad_y \tau_{12} = 2 \grad_x \cdot \grad_y \tau_{11} - A \left( v \cdot \grad_x \tau_{11} - \average{v \cdot \grad_x \tau_{11}}\right),
\end{equation}
where $\average{\cdot}$ denotes the mean with respect to the fast variable~$y$. Observe that
we had to introduce the term $A \average{v \cdot \grad_x \tau_{11}}$ above to ensure that the right hand side is mean zero, to satisfy the compatibility condition.

We write
\begin{equation}\label{eqnVarphi1primebis}
\tau_{11}(x,y) =  \chi_1(y) \del_{x_1} \tau_{10}(x) + \chi_2(y)\del_{x_2}\tau_{10}(x)= - \chi_1(y)x_1 - \chi_2(y)x_2,
\end{equation}
where $\chi_j= \chi_j(y)$, $j=1,2$ are the mean zero, periodic solutions to
\begin{equation}\label{eqnChi1bis}
- \Delta_y \chi_j + A v \cdot \nabla_y \chi_j =  -A v_j.
\end{equation}
Using this expression for $\tau_{11}$ and~\eqref{eqnVarphi0bis} we 
simplify~\eqref{eqnVarphi2bis} to
\begin{equation}\label{eqnVarphi2primebis}
- \Delta_y \tau_{12} + A v \cdot \nabla_y \tau_{12} = - 2\del_{y_1} \chi_1 - 2\del_{y_2}\chi_2 + A (v_1 \chi_1  +v_2\chi_2 - \langle v_1  \chi_1 \rangle-\langle v_2  \chi_2 \rangle).
\end{equation}

The key observation is that with our choice of $\tau_{10}$, the right side of \eqref{eqnVarphi2primebis} 
is \textit{independent} of the slow variable. Our aim is to show that  $\tilde{\tau}$  satisfies the estimates~\eqref{eqnSuperSolBoundbis} and~\eqref{eqnSuperSolbis} below.
\begin{lemma}\label{lmaSupSolbis}%{{{
There exists a positive constant $c_0 = c_0(\alpha)$ 
independent of $A$, $L$, such that for~$\tilde{\tau}$ defined by~\eqref{eqnVarphiDefbis} 
we have
\begin{equation}\label{eqnSuperSolBoundbis}
\left|\tilde{\tau}(x)-\tau_{10}(x)\right| \leq c_0 L^{-\alpha/4} \qquad\text{for } x \in B_1
\end{equation}
and
\begin{equation}\label{eqnSuperSolbis}
 -\lap\tilde{\tau} + A L v(Lx) \cdot \grad_x \tilde{\tau} = \trace( \bar \sigma(A)).
\end{equation}
Here $\bar \sigma(A)$ is the effective diffusion matrix, given by~\eqref{intro10}.
\end{lemma}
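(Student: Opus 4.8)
The proof of~\eqref{eqnSuperSolbis} is essentially a bookkeeping exercise: by construction, $\tau_{11}$ cancels the $O(L)$ terms in the expansion of $-\lap\tilde\tau + ALv(Lx)\cdot\grad\tilde\tau$ via~\eqref{eqnVarphi1bis}, and $\tau_{12}$ cancels the $O(1)$ terms via~\eqref{eqnVarphi2bis}, \emph{except} for the mean term $A\average{v\cdot\grad_x\tau_{11}}$ that was deliberately left out to enforce the compatibility condition. So I would carefully collect all surviving $O(1)$ terms after substituting~\eqref{eqnVarphi1bis} and~\eqref{eqnVarphi2bis}; what remains is $-\lap_x\tau_{10} + A\average{v\cdot\grad_x\tau_{11}}$. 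Using $-\lap_x\tau_{10}=1$ from~\eqref{eqnVarphi0bis}, and $\tau_{11}=\chi_1\del_{x_1}\tau_{10}+\chi_2\del_{x_2}\tau_{10}$ from~\eqref{eqnVarphi1primebis}, the mean term becomes $\sum_j \average{v\cdot\grad_x(\chi_j\del_{x_j}\tau_{10})}$; since $\chi_j$ depends only on $y$, the slow gradient hits only $\del_{x_j}\tau_{10}$, giving $\sum_j \average{v_i\chi_j}\del_{x_i}\del_{x_j}\tau_{10}$ (summation over $i$). With $\del_{x_i}\del_{x_j}\tau_{10} = -\delta_{ij}$ from~\eqref{eqnVarphi0bis}, this is $-\sum_j \average{v_j\chi_j}$. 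Combining, the right side equals $1 + \sum_j\average{v_j\chi_j}$, and I would then identify this with $\trace(\bar\sigma(A))$ using the corrector identity: multiplying~\eqref{eqnChi1bis} by $\chi_j$ and integrating by parts over a cell gives $\frac1{|Q|}\int_Q|\grad_y\chi_j|^2 = A\average{v_j\chi_j}$... but one must be slightly careful about the $A$-scaling in~\eqref{eqnChi1bis} versus the normalization in~\eqref{intro10}. The correctors $\chi_j$ in~\eqref{eqnChi1bis} are the $A$-dependent ones ($\chi_j^A$ in the introduction's notation), and a short computation pins down that $\bar\sigma_{jj}(A) = 1 + \frac1{|Q|}\int_Q \grad\chi_j\cdot\grad\chi_j$; combined with the divergence-free integration by parts this yields exactly $\trace(\bar\sigma(A)) = 2 + \sum_j\frac1{|Q|}\int_Q|\grad\chi_j|^2 = 2+\sum_j A\average{v_j\chi_j}$. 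Reconciling the two expressions (the factor of $2$ from $\trace$ of the identity versus the $1$ from $-\lap_x\tau_{10}$) is where one must track constants honestly — I expect this to be the one genuinely error-prone spot, and it likely forces a small correction in how one writes $\tau_{10}$ or absorbs the $O(1)$ right-hand side, or else $\trace(\bar\sigma(A))$ is meant with the convention that makes it come out right. This reconciliation is the main obstacle, though it is purely computational.

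For the estimate~\eqref{eqnSuperSolBoundbis}, since $\tilde\tau - \tau_{10} = \frac1L\tau_{11} + \frac1{L^2}\tau_{12}$, it suffices to show $\norm{\tau_{11}}_{L^\infty}$ and $\norm{\tau_{12}}_{L^\infty}$ grow slowly enough in $A$ that, after division by $L$ and $L^2$ respectively and using $A\sim L^{4-\alpha}$, the result is $O(L^{-\alpha/4})$. For $\tau_{11}=-\chi_1 x_1 - \chi_2 x_2$ with $|x|\le 1$, this reduces to a bound on $\norm{\chi_j}_{L^\infty}$, and here I would invoke the known asymptotics of the correctors from~\cite{bblFannjiangPapanicolaou}: $\norm{\chi_j}_{L^\infty} = O(1)$ (indeed $\chi_j + y_j$ is bounded and nearly constant in cell interiors), so $\frac1L\norm{\tau_{11}}_{L^\infty} = O(1/L)$, which is $O(L^{-\alpha/4})$ provided $\alpha \le 4$. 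The term $\frac1{L^2}\tau_{12}$ is the delicate one: $\tau_{12}$ solves~\eqref{eqnVarphi2primebis}, an elliptic equation with the \emph{divergence-free} drift $Av$ and a right-hand side $f = -2\del_{y_1}\chi_1 - 2\del_{y_2}\chi_2 + A(v_1\chi_1 + v_2\chi_2 - \average{\cdots})$. The naive concern is that the $A$-dependent drift and the explicit factor of $A$ in $f$ make $\norm{\tau_{12}}_{L^\infty}$ blow up in $A$. The resolution, as flagged in the introduction, is Lemma~\ref{lmaDriftIndLinfBound}: applied on a single period cell with periodic (rather than Dirichlet) data — or by a routine localization/cutoff argument reducing to the Dirichlet case — it gives $\norm{\tau_{12}}_{L^\infty} \le C\norm{f}_{L^p}$ with $C$ \emph{independent of $A$}. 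So the entire $A$-dependence of $\norm{\tau_{12}}_{L^\infty}$ is controlled by $\norm{f}_{L^p}$, which by the corrector asymptotics of~\cite{bblFannjiangPapanicolaou} ($\norm{\grad\chi_j}_{L^p} \lesssim A^{?}$, $\norm{\chi_j}_{L^\infty}=O(1)$) grows at most polynomially — roughly $\norm{f}_{L^p} = O(A)$ from the $Av_j\chi_j$ term, possibly with a better exponent once the near-constancy of $\chi_j$ in cell interiors and the $O(1/\sqrt A)$ boundary-layer width are exploited, but even $O(A)$ suffices: then $\frac1{L^2}\norm{\tau_{12}}_{L^\infty} = O(A/L^2) = O(L^{2-\alpha})$...

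That last arithmetic does \emph{not} close on its own — $O(L^{2-\alpha})$ is not $O(L^{-\alpha/4})$ — so the real content of the estimate is getting a genuinely better bound on $\norm{f}_{L^p}$ (equivalently, on $\norm{\chi_j}$ in the right norms). The point is that $v_j\chi_j - \average{v_j\chi_j}$ is \emph{not} of size $A$; rather, since $\chi_j$ equilibrates to a constant away from an $O(1/\sqrt A)$ boundary layer, $v_j\chi_j - \average{v_j\chi_j}$ is supported essentially in that layer plus lower-order terms, so $\norm{A(v_j\chi_j-\average{v_j\chi_j})}_{L^p}$ is actually of order $A \cdot A^{-1/(2p)} \cdot (\text{something}) = O(\sqrt A)$ or better for suitable $p$ — matching $\norm{\del_{y}\chi_j}_{L^p} = O(\sqrt A)$ from the known corrector gradient estimates. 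Granting $\norm{f}_{L^p} = O(\sqrt A)$, we get $\frac1{L^2}\norm{\tau_{12}}_{L^\infty} = O(\sqrt A/L^2) = O(L^{-\alpha/2}) = O(L^{-\alpha/4})$, as desired. So the plan is: (i) verify~\eqref{eqnSuperSolbis} by direct substitution and the corrector energy identity, (ii) bound $\tau_{11}$ trivially from $\norm{\chi_j}_{L^\infty}=O(1)$, (iii) apply the drift-independent Lemma~\ref{lmaDriftIndLinfBound} to $\tau_{12}$ to reduce to an $L^p$ bound on the right-hand side $f$, and (iv) estimate $\norm{f}_{L^p} = O(\sqrt A)$ using the known corrector asymptotics of~\cite{bblFannjiangPapanicolaou}, in particular the $O(1/\sqrt A)$ boundary-layer localization. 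Step (iv) — squeezing the $O(\sqrt A)$ (as opposed to $O(A)$) bound on $\norm{f}_{L^p}$ out of the corrector estimates — is the main obstacle, and the drift-independence in step (iii) is the crucial structural input that makes the whole scheme viable.
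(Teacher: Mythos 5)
There is a genuine gap in your handling of $\tau_{12}$, and it comes from a misremembering of the corrector asymptotics. You write that since $\chi_j$ ``equilibrates to a constant away from an $O(1/\sqrt A)$ boundary layer,'' the term $v_j\chi_j - \average{v_j\chi_j}$ is essentially supported in that layer, and conclude $\norm{f}_{L^p} = O(\sqrt A)$. But it is $\chi_j + y_j$ that becomes constant in cell interiors (equal to $\pm\tfrac12$), not $\chi_j$ itself; away from the boundary layer $\chi_j(y)\approx \tfrac12\sign(y_j) - y_j$, which is affine and non\-constant. Consequently $v_j\chi_j - \average{v_j\chi_j}$ is $O(1)$ throughout the cell interior, not localized to the $O(1/\sqrt A)$ layer, and $\norm{A(v_j\chi_j - \average{v_j\chi_j})}_{L^p}$ is genuinely $O(A)$, not $O(\sqrt A)$. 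So a single application of the drift-independent Lemma~\ref{lmaDriftIndLinfBound} to the full $\tau_{12}$ equation only gives $\norm{\tau_{12}}_{L^\infty} = O(A)$, and the exponent does not close.

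What you are missing is the corrector-subtraction/drift-absorption device. Split $\tau_{12} = \eta + \psi_1 + \psi_2$: $\eta$ absorbs the $\del_{y_i}\chi_i$ forcing (bounded in $L^2$ by $\norm{\grad\chi_i}_{L^2} = O(A^{1/4})$); $\psi_1$ is forced by $A\bigl(v_i\xi_i - \average{v_i\chi_i}\bigr)$ with $\xi_i = \chi_i + y_i - \tfrac12\sign(y_i)$, which \emph{is} localized to the boundary layer, so $\norm{\xi_i}_{L^p}\lesssim A^{-1/(2p)}$ and Lemma~\ref{lmaDriftIndLinfBound} gives $\norm{\psi_1}_{L^\infty}\lesssim A^{1-1/(2p)}$; and $\psi_2$ is forced by $-A v_i(y_i - \tfrac12\sign(y_i))$, which cannot be made small in $L^p$ directly, but \emph{can} be written (away from the layer) as $A\,v\cdot\grad_y f_i$ with $f_i(y)\approx (y_i^2-|y_i|)/2$, and hence absorbed into the convection term $Av\cdot\grad_y$ by the change of unknown $\theta = \psi_2 + \sum(f_i - \average{f_i})$. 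The residual forcing for $\theta$ is again boundary-layer localized. This absorption step is the crucial structural idea; without it the $O(A)$ piece of the forcing defeats the drift-independent estimate. The final bound is $\norm{\tau_{12}}_{L^\infty}\lesssim A^{1-1/(2p)}$, which you then kill by $1/L^2$ using $A\sim L^{4-\alpha}$ with a judicious choice of $p$ depending on $\alpha$ (this is more delicate than a blanket $O(\sqrt A)$, and the flexibility in $p$ is what makes all $\alpha \in (0,4)$ accessible).

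Two small corrections to your verification of~\eqref{eqnSuperSolbis}: since $\tau_{10} = (1-|x|^2)/2$ in dimension $2$, $-\lap_x\tau_{10} = 2$, not $1$; and the energy identity from~\eqref{eqnChi1bis} reads $\average{|\grad_y\chi_j|^2} = -A\average{v_j\chi_j}$ (with a minus sign). With these in place the reconciliation you were worried about is automatic: $-\lap\tilde\tau + ALv\cdot\grad\tilde\tau = 2 - A\sum_j\average{v_j\chi_j} = 2 + \sum_j\average{|\grad\chi_j|^2} = \trace(\bar\sigma(A))$, matching~\eqref{intro10} exactly. There is no factor-of-two mismatch to paper over.
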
%}}}
We first use the Lemma to finish the proof of Proposition~\ref{prop-disk}.

\begin{proof}[Proof of  Proposition~\ref{prop-disk}]%{{{
The key observation we obtain from Lemma~\ref{lmaSupSolbis} is that, except for the boundary condition, the function
$\tilde{\tau}'(x)=\tilde{\tau}(x)/\trace( \bar \sigma(A) )$ satisfies exactly \eqref{eqnTau1}. This is a miracle that happens only when the domain is a disk. Then we get sub- and super-solutions for $\tau_1(x)$ by setting 
\[
\overline{\tau}(x)=\frac{1}{\trace(\bar \sigma(A))}\left[\tilde{\tau}(x)+\frac{2c_0}{L^{\alpha/4}}\right],
\]
and
\[
\underline{\tau}(x)=\frac{1}{\trace(\bar \sigma(A))}\left[\tilde{\tau}(x)-\frac{2c_0}{L^{\alpha/4}}\right].
\]
Lemma~\ref{lmaSupSolbis} implies that
\[
-\Delta\overline{\tau}(x)+ALv(Lx)\cdot\nabla\overline{\tau}(x)=-\Delta\underline{\tau}(x)+ALv(Lx)\cdot\nabla\underline{\tau}(x)=1.
\]
Further, since  $\tau_{10}(x)=0$ on $\del B_1$, equation~\eqref{eqnSuperSolBoundbis} implies that $\overline{\tau}(x)>0$ and $\underline{\tau}(x)<0$ on $\del B_1$. Consequently, $\overline{\tau}$ is a super solution, and $\underline{\tau}$ is a sub solution of~\eqref{eqnTau1}, and hence
\begin{equation}\label{eqnTau1B1}
\frac{1}{\trace(\bar \sigma(A))}\left[\tilde{\tau}(x)-\frac{2c_0}{L^{\alpha/4}}\right]\leq\tau_1(x)\leq 
\frac{1}{\trace(\bar \sigma(A))}\left[\tilde{\tau}(x)+\frac{2c_0}{L^{\alpha/4}}\right].
\end{equation}
Rescaling to the ball of radius $L$, we see
$$
\abs*{\tau(x) - \frac{L^2}{\trace( \bar\sigma(A))} \tau_{10}\left(\frac{x}{L}\right)} \leq \frac{L^2}{\trace(\bar\sigma(A))} \frac{4 c_0}{L^{\alpha / 4}}.
$$
Now using~\eqref{eqnEffectiveDiffusivityAsym} and~\eqref{eqnHomogAssumption} we obtain~\eqref{eqnTau1Bound-bis}.
\end{proof}%}}}

It remains to prove Lemma~\ref{lmaSupSolbis}.
\begin{proof}[Proof of Lemma~\ref{lmaSupSolbis}]%{{{
By our definition of $\tau_{10}$, $\tau_{11}$, $\tau_{12}$, we have
$$
- \lap \tilde{\tau} + A L v \cdot \grad \tilde{\tau}
    = -\lap_x \tau_{10} - \frac{1}{L} \lap_x \tau_{11} - A [\average*{v_1 \chi_1}+\average*{v_2 \chi_2}]
    = 2 + \average*{\abs{\grad \chi_1}^2 + \abs{\grad\chi_2}^2  } = \trace(\bar\sigma(A)),
$$
where the second inequality follows from~\eqref{eqnChi1bis}. This is exactly~\eqref{eqnSuperSolbis}.

To prove~\eqref{eqnSuperSolBoundbis}, we will show
\begin{equation}\label{eqnCorrectionTermBoundsbis}
\frac{1}{L} \norm{\tau_{11}}_{L^\infty} + \frac{1}{L^2} \norm{\tau_{12}}_{L^\infty} \leq c_0 L^{-\alpha/4},
\end{equation}
for some constant $c_0= c_0(\alpha)$, independent of $A$ and $L$.  
We will subsequently adopt the convention that $c$ is a constant, depending only on $\alpha$, which can change from line to line.

We first bound $\tau_{11}$. Let $Q = (-1, 1)^2$ be the fundamental domain of the fast variable. Let $\del_v Q$ and $\del_h Q$ denote the vertical and horizaondal boundaries of $Q$ respectively. Since $\chi_1(y_1, y_2)$ is odd in $y_1$ and even in $y_2$, by symmetry we have $\chi_1 = 0$ on $\del_v Q$, and $\del_{y_2} \chi_1 = 0$ on $\del_h Q$. Now if we consider the function $\chi_1 + y_1$, we have
\begin{gather*}
-\lap_y (\chi_1 + y_1) + A v \cdot \grad_y (\chi_1 + y_1) = 0,\\
\abs{\chi_1(y) + y_1} \leq 1 \text{ on } \del_v Q,
\quad\text{and}\quad
\frac{\del}{\del n} \left( \chi_1 + y_1 \right) = 0 \text{ on } \del_h Q.
\end{gather*}
Thus the Hopf Lemma implies $\chi_1 + y_1$ does not attain it's maximum on $\del_h Q$, except possibly at corner points. So by the maximum principle $\chi_1 + y_1$ attains its maximum on $\del_v Q$, and so
$$
\norm{\chi_1}_{L^\infty} \leq 1.
$$
Since $\chi_2$ is bounded similarly, we immediately have
\begin{equation}\label{eqnVarphi1Boundbis}
\frac{\norm{\tau_{11}}_{L^\infty}}{L} \leq c L^{-1} \leq c L^{-\alpha/4}.
\end{equation}

The last step is to prove a bound on $\norm{\tau_{12}}_{L^\infty}$. The crucial idea to bound $\tau_{12}$ is to split the right hand side of \eqref{eqnVarphi2primebis} into terms which are small in $L^p$, and terms which can be absorbed by the convection term. To this end, write $\tau_{12} = \eta + \psi_1 + \psi_2$ where $\eta$, $\psi_i$ are mean-zero, periodic solutions to
\begin{gather*}
- \Delta_y \eta + A v \cdot \nabla_y \eta =  -2 \sum_{i=1}^2 \del_{y_i} \chi_i\\
-\lap_y \psi_1 + A v \cdot \grad_y \psi_1 = A \sum_{i=1}^2 \left[ v_i \left(\chi_i + y_i - \frac{1}{2} \sign(y_i) \right) - \average{v_i \chi_i} \right]  \\
-\lap_y \psi_2 + A v \cdot \grad_y \psi_2 = -A \sum_{i=1}^2 v_i \left( y_i  - \frac{1}{2} \sign(y_i) \right).
%- \Delta_y \psi + A v \cdot \nabla_y \psi =  -2 A (v_1 \chi_1   - \langle v_1  \chi_1 \rangle)
\end{gather*}

Before estimating each term individually, we pause momentarily to explain this decomposition of $\tau_{12}$. The equation for $\eta$ is of course natural. 
The equation for $\psi_1$ stems from the well known behaviour of the corrector $\chi_1$. 
We know from~\cites{bblFannjiangPapanicolaou,bblNovikovPapanicolaouRyzhik,bblGorbNamNovikov} 
that $\chi_1$ grows rapidly in a boundary layer of width
$O({1}/{\sqrt{A}})$ and decreases \textit{linearly} in the cell interior. That is, for $i = 1,2$,
\[
\chi_i\approx \frac{1}{2}\sign y_i-y_i,
\]
away from the boundary layer. Further, by symmetry, $\chi_i$ is odd in $y_i$, and even in the other variable. Thus, we expect the term $\chi_i + y_i - \frac{1}{2} \sign(y_i)$   to be away from zero only in 
the boundary layer (see Figure~\ref{fgrxi}), and hence should have a small $L^p$ norm! Now the equation for 
$\psi_2$ is chosen to balance the remaining terms, and thankfully the right 
hand side can be absorbed in the convection term.
\begin{figure}[htb]
\centering
\subfigure[A $3D$ plot of the function $y_1+\chi_1(y_1,y_2)$.]{\includegraphics[height=7cm]{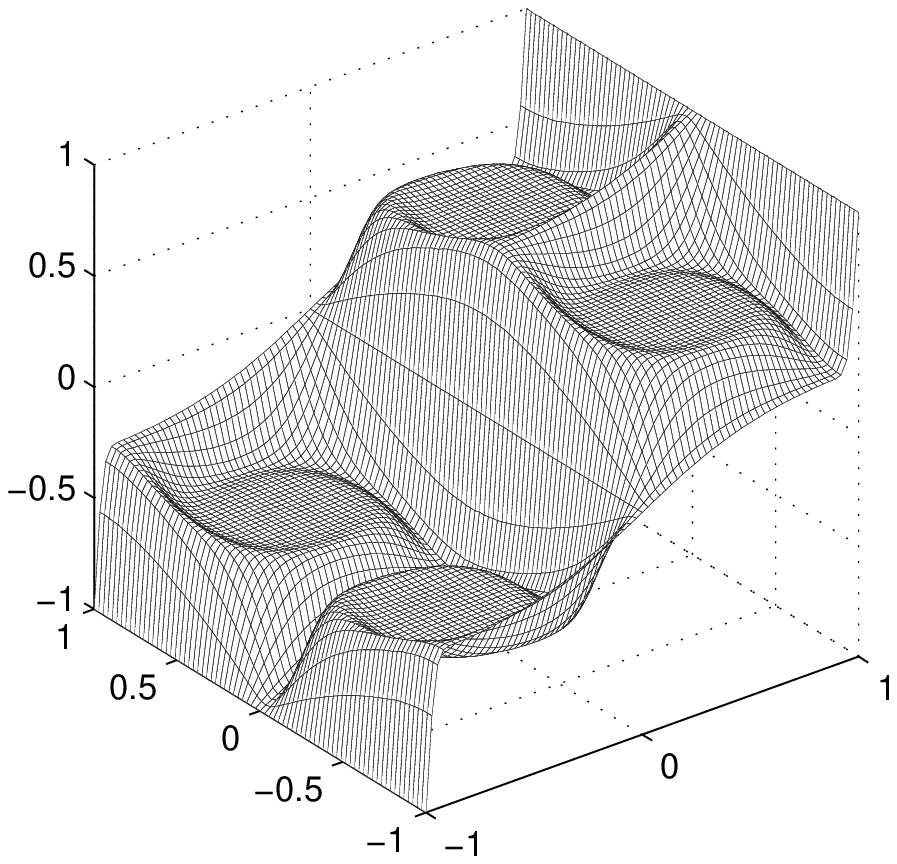}}
\qquad
\subfigure[The cross-section of the plot of the function $y_1+\chi_1(y_1,y_2)-\tfrac{1}{2}\sign y_1$ at $y_1=1/2$.]{\includegraphics[height=7cm]{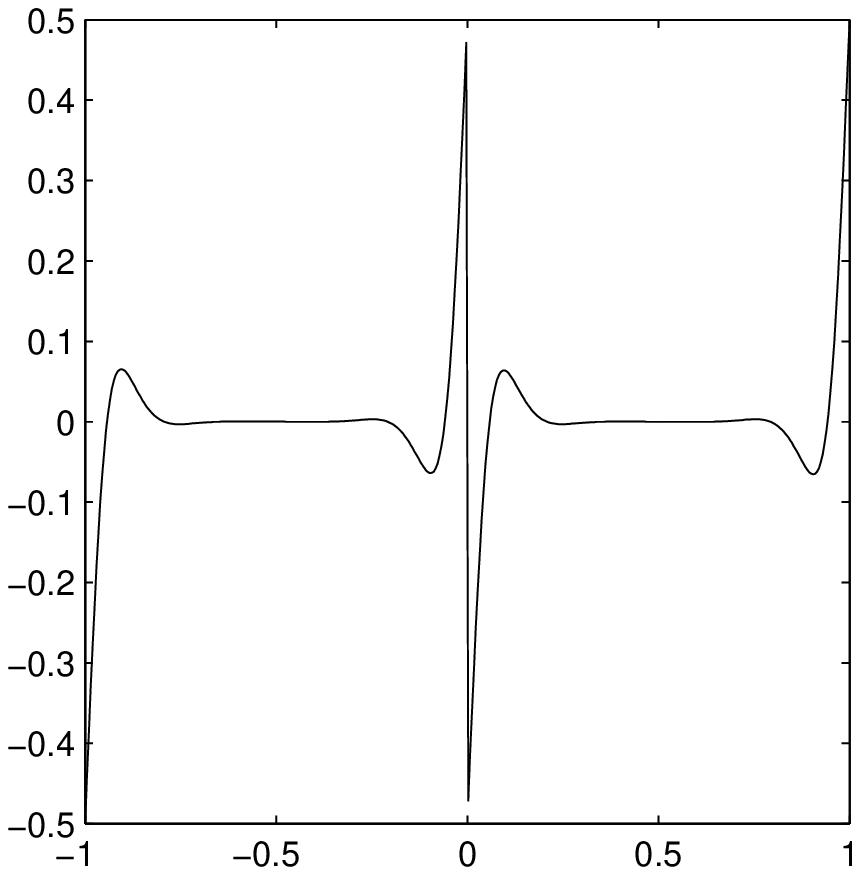}}
\caption{Two plots indicating that $\chi_1 + y_1 - \frac{1}{2} \sign(y_1)$ is small in cell interiors.}\label{fgrxi}
\end{figure}

With this explanation, we proceed to estimate each function individually, 
starting with $\eta$. Since $\divergence v = 0$, Lemma~\ref{lmaDriftIndLinfBound} guarantees
$$
\norm{\eta}_{L^\infty} \leq c \norm{\grad \chi_1}_{L^2}.
$$
We remark that while Lemma~\ref{lmaDriftIndLinfBound} is stated for homogeneous Dirichlet boundary conditions, the proof in~\cite{bblBerestyckiKiselevNovikovRyzhik} goes through verbatim for periodic boundary conditions, provided, of course, we assume our solution is mean-zero. This justifies the application of Lemma~\ref{lmaDriftIndLinfBound} in this context.

Since we know from~\cite{bblFannjiangPapanicolaou} that
$\norm{\grad  \chi_1}_{L^2} = O(A^{1/4})$, 
we immediately obtain
\begin{equation}\label{eqnPhiBoundbis}
\norm{\eta}_{L^\infty} \leq c A^{1/4}.
\end{equation}

Our bound for $\psi_1$ is similar in flavor. Let $\xi_i = \chi_i + y_i - \frac{1}{2} \sign(y_i)$. Then for any $p \geq 1$, we know from~\cite{bblFannjiangPapanicolaou} (see also \cite{bblNovikovPapanicolaouRyzhik}*{Theorem 1.2}) that 
$$
\norm{\xi_i}_{L^p} \leq c A^{-\frac{1}{2p}}.
$$
Since $-A \average{v_i \chi_i} = O(\sqrt{A})$, from Lemma~\ref{lmaDriftIndLinfBound} we have
\begin{equation}\label{eqnVarPsi1bdbis}
\norm{\psi_1}_{L^\infty} \leq c A \left( \norm{\xi_i}_{L^p} + \sum_{i=1}^2 \abs*{\average*{v_i \chi_i}} \right) \leq c A^{1 - \frac{1}{2p}}
\end{equation}
for any $p > 1$.

Finally for $\psi_2$, our aim is to absorb the right hand side into the drift. For $i = 1,2$, Let $f_i = f_{i,A}$ be defined by
$$
f_i(y) =  \frac{ y_i^2 - \abs{y_i} }{2} \qquad\text{if } \frac{1}{\sqrt{A}} \leq \abs{y_i} \leq  1 - \frac{1}{\sqrt{A}},
%y_1^2 -\pi \left( \frac{1}{2 \sqrt{A}} + \frac{\sqrt{A}}{2} y_1^2 \right) & \text{if } \abs{y_1} < \frac{1}{\sqrt{A}}
$$
and extended to be a $C^1$, periodic function on $\R^2$ in the natural way. Set $\theta = \psi_2 + \sum_{i=1}^2 (f_i - \average{f_i})$, then $\theta$ is a periodic, 
mean-zero solution to
$$
-\lap_y \theta + A v \cdot \grad_y \theta = \sum_{i=1}^2 \left( A v_i g_i - \lap_y f_i \right).
$$
where
$$
g_i(y) =   \del_{y_i} f_i - y_i + \frac{1}{2} \sign(y_i) 
$$
Since for any $p \geq 1$, we can explicitly compute
$$
\norm{g_i}_{L^p} \leq c A^{-\frac{1}{2p}} \quad\text{and}\quad \norm{\lap_y f_i}_{L^p} \leq c A^{\frac{1}{2} - \frac{1}{2p}},
$$
by Lemma~\ref{lmaDriftIndLinfBound} we obtain
\begin{equation}\label{eqnVarPsi2bdbis}
\norm{\psi_2}_{L^\infty} \leq 1 + \norm{\theta}_{L^\infty} \leq c A^{1 - \frac{1}{2p}}
\end{equation}
for any $p > 1$.

Thus combining~\eqref{eqnPhiBoundbis}, \eqref{eqnVarPsi1bdbis} and~\eqref{eqnVarPsi2bdbis},  we see $\norm{\tau_{12}}_{L^\infty} \leq c A^{1 - 1/(2p)}$.
Thus using~\eqref{eqnHomogAssumption} and choosing $p = \frac{8 - 2\alpha}{8 - 3\alpha}$ when $0 < \alpha < 8/3$, and $p = \infty$ for $\alpha \geq 8/3$, we see
$$
\frac{\norm{\tau_{12}}_{L^\infty}}{L^2} \leq c L^{-\frac{\alpha}{4}}
$$
proving~\eqref{eqnCorrectionTermBoundsbis}. This completes the proof.
\end{proof}%}}}

\subsection*{Exit time from a square.}%{{{2
Theorem~\ref{ppnExitTimeHomog} follows immediately from Proposition~\ref{ppnExitTimeHomog}.

\begin{proof}[Proof of Theorem~\ref{ppnExitTimeHomog}]%{{{
The proof of Theorem~\ref{ppnExitTimeHomog} is now trivial. We simply inscribe a disk $\underline{D}=\{|x|\leq L/2\}$ into the square $D = [-L/2, L/2]^2$, and circumscribe a bigger disk $\overline{D} = \{ \abs{x} \leq L / \sqrt{2} \}$ around $D$. The corresponding exit times satisfy the inequality
$$
\underline{\tau}(x) \leq \tau(x) \leq \overline{\tau}(x), \quad\text{for all $x \in \underline{D}$.}
$$
Using the bounds obtained from Proposition~\ref{prop-disk} applied to $\underline{D}$ and $\overline{D}$, the inequality~\eqref{071104} follows.
\end{proof}%}}}

\section{The eigenvalue in the homogenization regime}\label{sec:homeig}%{{{1
\subsection{The lower bound}\label{sxnHomogLowerBound}%{{{2
The lower bound for the eigenvalue stated in Theorem~\ref{thmEvalHomog} follows, quickly from the upper bound on the expected exit time.

\begin{proof}[Proof of the lower bound in Theorem~\ref{thmEvalHomog}]%{{{
We claim that in general, we have the principal eigenvalue and expected exit time satisfy
\begin{equation}\label{071304}
\lambda \geq  \frac{1}{\|\tau\|_{L^\infty}}.
\end{equation}
To see this, pick any $\epsilon > 0$, and suppose for contradiction that $\lambda < 1 / \norm{\tau + \epsilon}_{L^\infty}$. Then, 
$$
-\Delta (\tau + \epsilon) + A v\cdot\nabla (\tau + \epsilon) = 1\geq\frac{1}{\|\tau + \epsilon \|_{L^\infty}} \left(\tau + \epsilon\right).
$$
Also
$$
- \lap \varphi + A v \cdot \grad \varphi = \lambda \varphi \leq \frac{1}{\norm{\tau + \epsilon}_{L^\infty}} \varphi.
$$
Rescaling $\varphi$ if necessary to ensure $\norm{\varphi}_{L^\infty} \leq \epsilon$, we see have $\varphi \leq \tau + \epsilon$ in $D$. Thus Perron's method implies the existence of a function $\phi$ such that
\begin{IEEEeqnarray*}{c?s}
- \lap \phi + A v \cdot \grad \phi = \frac{1}{\norm{\tau + \epsilon}_{L^\infty}} \phi & in $D$,\\
\phi = 0 & on $\del D$,\\
\varphi \leq \phi \leq \tau & in $D$
\end{IEEEeqnarray*}
This immediately implies $1 / \norm{\tau + \epsilon}_{L^\infty}$ equals the principal eigenvalue $\lambda$, which contradicts our assumption. Thus, for any $\epsilon > 0$, we must have $\lambda \geq 1 / \norm{\tau + \epsilon}_{L^\infty}$. Sending $\epsilon \to 0$, we obtain~\eqref{071304}. Applying Theorem~\ref{ppnExitTimeHomog} concludes the proof.
\end{proof}%}}}

%}}}

\subsection{The upper bound}\label{sxnHomogUpperBound}%{{{2
In this section we prove the upper bound in Theorem~\ref{thmEvalHomog}. We will do this by using a multi-scale expansion 
of a sub-solution. As we have seen in the preceding sections, our multi-scale expansions are all forced to use a quadratic `slow' profile,
in order to avoid extra terms in the expansion.
This makes the construction of the sub-solution slightly more difficult. As customary with homogenization problems, we rescale the problem so that the cell size goes to $0$, and the domain is fixed.

\begin{lemma}\label{lmaPsi}%{{{
Let $h > 0$, and $\psi$ be the solution of
\begin{equation}\label{eqnPsiBound}
\left\{
\begin{aligned}
-\lap \psi + A L v(Lx) \cdot \grad \psi &= \chi_{B_{1-h}} &&\text{in } B_1\\
\psi &= 0 &&\text{on }\del B_1,\\
\end{aligned}\right.
\end{equation}
where $B_r=\{|x|\leq r\}$,  and $\chi_S$ is the characteristic function of the set $S$. 
Assume that $A$ and $L$ vary such that~\eqref{eqnHomogAssumption} holds. Then there exists $h > 0$, and $c = c(h) > 0$ such that
\begin{equation}\label{071106}
\psi(x) \geq \frac{c}{\sqrt{A}} \quad\text{for all } x \in B_{1-h}.
\end{equation}
provided $A$ and $L$ are sufficiently large.
\end{lemma}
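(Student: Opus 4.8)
The plan is to mirror the super-solution construction from Lemma~\ref{lmaSupSolbis}, but now built from below, and then use the strong maximum principle to convert a \emph{lower bound on the mean-zero approximate solution} into a pointwise lower bound on $\psi$. First I would construct an approximate solution $\tilde\psi = \psi_0 + \tfrac1L\psi_1 + \tfrac1{L^2}\psi_2$ with a quadratic slow profile $\psi_0$ chosen so that $-\lap_x\psi_0 = 1$ on $B_{1-h}$ (e.g.\ a rescaled paraboloid), exactly as in~\eqref{eqnVarphi0bis}--\eqref{eqnVarphi2primebis}. The key point, as before, is that the quadratic profile makes all the dangerous $A\cdot(\text{slow gradient of }\psi_2)$ terms vanish, so that
\[
-\lap\tilde\psi + AL v(Lx)\cdot\grad\tilde\psi = \trace(\bar\sigma(A))\,\chi_{B_{1-h}} + (\text{error supported near }\partial B_{1-h}),
\]
and the corrector bounds~\eqref{eqnCorrectionTermBoundsbis}, i.e.\ $\tfrac1L\|\psi_1\|_{L^\infty}+\tfrac1{L^2}\|\psi_2\|_{L^\infty}\le c_0 L^{-\alpha/4}$, carry over verbatim from the proof of Lemma~\ref{lmaSupSolbis} since they only used the drift-independent $L^p\to L^\infty$ estimate (Lemma~\ref{lmaDriftIndLinfBound}) and the known corrector asymptotics of~\cite{bblFannjiangPapanicolaou}.

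Next I would normalize: since $\trace(\bar\sigma(A))\approx\sigma_0\sqrt A$ by~\eqref{eqnEffectiveDiffusivityAsym}, the function $\underline\psi := \big(\tilde\psi - c_0 L^{-\alpha/4}\big)/\trace(\bar\sigma(A))$ satisfies $-\lap\underline\psi + ALv(Lx)\cdot\grad\underline\psi \le \chi_{B_{1-h}}$ in $B_1$ (the extra boundary-layer error term near $\partial B_{1-h}$ has a sign I would need to check, or absorb by shrinking to $B_{1-2h}$ where $\psi_0$ is genuinely concave and dominates), and $\underline\psi \le 0$ on $\partial B_1$ because $\psi_0$ vanishes there while the corrector terms are $\le c_0 L^{-\alpha/4}$. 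Hence $\underline\psi$ is a sub-solution of~\eqref{eqnPsiBound} and $\psi \ge \underline\psi$ in $B_1$. On the interior ball $B_{1-2h}$, the slow profile $\psi_0$ is bounded below by a constant $c_1(h)>0$, while the correction is at most $c_0 L^{-\alpha/4}\to 0$; therefore for $L$ large, $\tilde\psi - c_0L^{-\alpha/4} \ge c_1(h)/2$ on $B_{1-2h}$, giving $\psi \ge c_1(h)/(2\trace(\bar\sigma(A))) \ge c(h)/\sqrt A$ by~\eqref{eqnEffectiveDiffusivityAsym}. Relabeling $h\to 2h$ yields~\eqref{071106}.

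The main obstacle I anticipate is the error term in the equation for $\tilde\psi$ coming from the \emph{discontinuity} of the right-hand side $\chi_{B_{1-h}}$: unlike in Proposition~\ref{prop-disk}, where the forcing was the smooth constant $1$ on all of $B_1$, here $\psi_0$ cannot be taken smooth with $-\lap_x\psi_0 = \chi_{B_{1-h}}$ globally, so there will be a transition region near $|x|=1-h$ where $-\lap\tilde\psi + ALv\cdot\grad\tilde\psi$ is not exactly $\trace(\bar\sigma(A))\chi_{B_{1-h}}$. The fix is to choose $\psi_0$ to be the explicit radial solution of $-\lap_x\psi_0=\chi_{B_{1-h}}$ in $B_1$ with $\psi_0=0$ on $\partial B_1$ (which is $C^1$, quadratic inside $B_{1-h}$ and logarithmic/constant-gradient in the annulus, hence still has \emph{no} mixed slow-fast second derivatives of the offending type because $\grad_x\psi_0$ is radial and the relevant cross terms $A\,v\cdot\grad_x(\text{corrector-times-}\grad_x\psi_0)$ are handled by the same $L^p$ smallness of $\chi_i+y_i-\tfrac12\sign y_i$ as in Lemma~\ref{lmaSupSolbis}); and then to only claim the lower bound on the strictly smaller ball $B_{1-2h}$, where $\psi_0$ is genuinely the paraboloid and bounded below. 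A secondary technical point is checking the sign of the $\psi_1$-induced correction to the equation, i.e.\ that $-\tfrac1L\lap_x\psi_1$ and the leftover $-A\langle v_i\chi_i\rangle$ terms combine to exactly $\trace(\bar\sigma(A))$ minus something nonnegative on $B_{1-2h}$; this is the identity $\bar\sigma_{ii} = 1 + \langle|\grad\chi_i|^2\rangle$ from~\eqref{intro10} together with~\eqref{eqnChi1bis}, exactly as in the first display of the proof of Lemma~\ref{lmaSupSolbis}.
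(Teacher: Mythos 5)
Your approach has a genuine gap that you yourself flag but do not resolve, and the proposed ``fix'' does not repair it. The multi-scale expansion with second corrector only closes because the slow profile is \emph{globally quadratic}: it is precisely the constancy of $\grad_x\grad_x\psi_0$ that makes the right-hand side of~\eqref{eqnVarphi2bis} independent of $x$, hence $\psi_2=\psi_2(y)$ only, hence the catastrophic term $\tfrac{A}{L}\,v\cdot\grad_x\psi_2$ vanishes. If you instead take $\psi_0$ to be the radial Newtonian potential of $\chi_{B_{1-h}}$, then in the annulus $\psi_0$ is logarithmic, its Hessian is non-constant in $x$, and $\psi_2$ inherits $x$-dependence; the resulting term $\tfrac{A}{L}v\cdot\grad_x\psi_2$ is of order $L^{3-\alpha}$ and uncontrollable. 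Radiality of $\grad_x\psi_0$ does not help: what is needed is a \emph{constant Hessian}, and the $L^p$ smallness of $\chi_i+y_i-\tfrac12\sign y_i$ used in Lemma~\ref{lmaSupSolbis} is a statement about the $y$-profile of the correctors, not a device for suppressing $x$-dependence of $\psi_2$. Restricting the conclusion to $B_{1-2h}$ also does not save the argument: to compare $\underline\psi$ with $\psi$ you need $\underline\psi$ to be a sub-solution on all of $B_1$ (or boundary control on $\partial B_{1-2h}$, which you do not have), so the annular region cannot be avoided.

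The paper takes a fundamentally different route to avoid exactly this obstruction. Instead of engineering an approximate solution to the discontinuous forcing $\chi_{B_{1-h}}$, it writes $\psi=\tau_1-\theta$ where $\tau_1$ is the exit time from $B_1$ (already understood by Proposition~\ref{prop-disk} with the fully quadratic profile) and $\theta$ solves the Poisson problem with forcing $\chi_{\annulus_{1-h,1}}$ supported in the annulus. The lower bound then reduces to the sharp estimate $\|\theta\|_{L^\infty(B_{1-h})}\leq ch^{3/2}/\sqrt{A}$, which dominates the $O(h/\sqrt A)$ size of $\tau_1$ near $\partial B_{1-h}$ when $h$ is small. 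This estimate is the real content and is obtained through a separate chain of lemmas (\ref{lmaTheta1Bound}, \ref{lmaAnnExitTime}, \ref{lmaEta}): a Hopf-type sign argument for a Poisson problem on the annulus, an exit-time bound from a thin annulus, and linear-growth sub-solutions constructed in small rotated rectangles where a \emph{genuinely quadratic} slow profile can be used locally. None of this machinery appears in your proposal, and without some replacement for it the claimed $c/\sqrt A$ lower bound does not follow.
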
%}}}

Lemma~\ref{lmaPsi} immediately implies the desired upper bound. We present this argument below before delving into the technicalities of Lemma~\ref{lmaPsi}.

\begin{proof}[Proof of the upper bound in Theorem~\ref{thmEvalHomog}]%{{{
Let $\varphi_1$, $\mu_1$ be the principal eigenfunction and the principal eigenvalue respectively for the rescaled problem
\begin{equation}\label{eqnEvalBall}
\left\{
\begin{aligned}
-\lap \varphi_1 + A L v(Lx) \cdot \grad \varphi_1 &= \mu_1 \varphi_1 &&\text{in } B_1\\
\varphi_1 &= 0 &&\text{on }\del B_1,\\
\varphi_1 &> 0 && \text{in }B_1.
\end{aligned}\right.
\end{equation}
Assume, for contradiction, $\mu_1 > \frac{\sqrt{A}}{c}$, where $c$ is the constant in Lemma~\ref{lmaPsi}, then
$$
-\lap \varphi_1 + A L v(Lx) \cdot \grad \varphi_1 = \mu_1 \varphi_1 \geq \frac{\sqrt{A}}{c} \varphi_1.
$$
Also, if $\psi$ is the function from Lemma~\ref{lmaPsi}, then by the maximum principle, $\psi > 0$ in $B_{1-h}$. 
Hence,
$$
-\lap \psi + A L v(Lx) \cdot \grad \psi = \chi_{B_{1-h}} \leq \frac{\sqrt{A}}{c} \psi.
$$
By the Hopf lemma, we know $\frac{\del \varphi_1}{\del n} < 0$ on $\del B_1$, and so $\varphi_1$ can be rescaled to ensure $\varphi_1 \geq \psi$. Perron's method now implies that there exists a function $\phi$ that satisfies
\begin{equation}\label{eqnEvalBallbis2}
\left\{
\begin{aligned}
-\lap \phi + A L v(Lx) \cdot \grad \phi&= \frac{\sqrt{A}}{c}\phi &&\text{in } B_1\\
\phi &= 0 &&\text{on }\del B_1,\\
\phi &> 0 && \text{in }B_1,
\end{aligned}\right.
\end{equation}
and, in addition, $\psi(x)\leq\phi(x)\leq\varphi_1(x)$. Therefore, $\mu_1=\sqrt{A}/c$ is the principal eigenvalue, which contradicts our assumption $\mu_1 > \sqrt{A}/c$. Hence $\mu_1 \leq \sqrt{A}/c$.

Now rescaling back so the cell size is $1$, let $\lambda'$ and $\phi'$ be the principal eigenvalue and principal eigenfunction respectively of the problem~\eqref{eqnEval} on the ball of radius $L/2$. Since $\lambda' = 4 \mu_1 / L^2$, we have $\lambda' \leq 4\sqrt{A}/(c L^2)$. Finally, let $D$ be the square with side length $L$, and $\lambda$, $\varphi$ are the principal eigenvalue and eigenfunction respectively of the problem~\eqref{eqnEval} on $D$. Then, since $B_{L/2} \subset D$, the principal eigenvalues must satisfy $\lambda \leq \lambda'$, from which the theorem follows.
\end{proof}%}}}
 
It remains to prove Lemma~\ref{lmaPsi}.

\begin{proof}[Proof of Lemma~\ref{lmaPsi}]%{{{
Let $\theta = \tau_1 - \psi$, where $\tau_1$ is the solution of~\eqref{eqnTau1}, the expected exit time problem from the unit disk. Now rescaling~\eqref{eqnTau1Bound-bis} to the ball of radius $1$, (or directly using~\eqref{eqnTau1B1}, which was what lead to~\eqref{eqnTau1Bound-bis}), we obtain
\begin{equation}\label{eqnTau1Bound}
\abs*{\tau_1(x) - \frac{1}{2 \trace(\bar\sigma(A))} \left( 1 - \abs{x}^2 \right)} \leq \frac{c_3 L^{-\alpha/4}}{\sqrt{A}}
\end{equation}
provided $A$ and $L$ are large enough and satisfy~\eqref{eqnHomogAssumption}. Here $c_3 > 0$ is a fixed constant independent of $A$ and $L$. Thus, \eqref{071106} will follow if we show that
\begin{equation}\label{eqnThetaBound1}
\norm{\theta}_{L^\infty(B_{1-h})} \leq (1 - \epsilon') \inf_{x\in B_{1-h}} \tau_1(x)
\end{equation}
for some small $\epsilon' > 0$. Observe that~\eqref{eqnTau1Bound} implies that the right hand side of~\eqref{eqnThetaBound1} 
is $O(h/\sqrt{A})$. Therefore, to establish~\eqref{eqnThetaBound1}, it suffices to show that there exists constants 
$h_0 > 0$ and $c > 0$ such that for all $h \leq h_0$, there exists $A_0 = A_0(h)$ and $L_0 = L_0(h)$ such that
\begin{equation}\label{eqnThetaBound2}
\norm{\theta}_{L^\infty(B_{1-h})} \leq \frac{c}{\sqrt{A}} h^{3/2},
\end{equation}
provided $A \geq A_0$, $L \geq L_0$ and~\eqref{eqnHomogAssumption} holds. Above any power of $h$ strictly larger
than $1$ will do; our construction below obtains $h^{3/2}$, however, in reality one would expect the power to be $h^2$.

We will obtain~\eqref{eqnThetaBound2} by considering a Poisson problem on the annulus 
\[
\annulus_{1-2h,1}=\{1-2h\leq |x|\leq 1\}.
\] 
If we impose a large enough constant boundary condition on the inner boundary, the (inward) normal derivative 
will be negative on $\del B_{1-2h}$. Now, if we extend this function inward by a constant, we will have a
super-solution giving the desired estimate for $\theta(x)$. We first state a lemma guaranteeing the sign of the normal 
derivative of an appropriate Poisson problem.
\begin{lemma}\label{lmaTheta1Bound}%{{{
There exists $h_0$ and $c_2 > 0$, such that for all $h < h_0$, there exists $A_0$, $L_0 > 0$ such that the solution 
$\theta_1$ of the PDE
\begin{equation}\label{eqnTheta1}
\left\{
\begin{aligned}
-\lap \theta_1 + A L v(Lx) \cdot \grad \theta_1 &= \chi_{\annulus_{1, 1-h}} &&\text{in } \annulus_{1-2h,1}\\
\theta_1 &= 0 &&\text{on }\del B_1,\\
\theta_1 &= \frac{c_2}{\sqrt{A}} h^{3/2} &&\text{on }\del B_{1-2h},\\
\end{aligned}\right.
\end{equation}
satisfies
$$
\frac{\del \theta_1}{\del r} \leq 0 \quad\text{on } \del B_{1-2h},
$$
provided $L \geq L_0$, $A \geq A_0$ and~\eqref{eqnHomogAssumption} holds. 
Here $\frac{\del}{\del r}$ denotes the derivative with respect to the radial direction.
Moreover, the function $\theta_1$ attains its maximum on $|x|=1-2h$, and $\theta_1(x)\leq  {c_2}  h^{3/2}/{\sqrt{A}}$
for all $x\in\annulus_{1-2h,1}$.
\end{lemma}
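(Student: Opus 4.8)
The Lemma makes three assertions: that $\frac{\del\theta_1}{\del r}\le 0$ on $\del B_{1-2h}$, that $\theta_1$ attains its maximum on $\del B_{1-2h}$, and the pointwise bound $\theta_1\le c_2 h^{3/2}/\sqrt A$ on $\annulus_{1-2h,1}$. The third implies the other two: since $\theta_1$ equals $c_2 h^{3/2}/\sqrt A$ on $\del B_{1-2h}$, the bound says $\theta_1$ never exceeds its inner boundary value, so the maximum is attained there, and for $x$ in the interior the difference quotient $\bigl(\theta_1(x)-c_2h^{3/2}/\sqrt A\bigr)/\bigl(\abs{x}-(1-2h)\bigr)$ is nonpositive, whence $\frac{\del\theta_1}{\del r}\le 0$ on $\del B_{1-2h}$ in the limit (alternatively, once the bound and the equality on $\del B_{1-2h}$ are known, the Hopf lemma applies). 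So the whole content is the pointwise bound.

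To prove it I would build a super-solution $\bar\theta=p(x)+\frac{1}{L}\theta_{11}(x,Lx)+\frac{1}{L^2}\theta_{12}(Lx)+\eta_L$ of the $\theta_1$-problem by the multi-scale expansion of Lemma~\ref{lmaSupSolbis}, with $\eta_L$ a small constant dominating the corrector boundary values and $p$ a slow profile close to the homogenized solution $u$ on the annulus, i.e.\ to the radial function solving $-\grad\cdot\bigl(\bar\sigma(A)\grad u\bigr)=\chi_{\annulus_{1,1-h}}$ on $\annulus_{1-2h,1}$ with $u=0$ on $\del B_1$ and $u=c_2h^{3/2}/\sqrt A$ on $\del B_{1-2h}$. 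Split $u=u_S+u_H$, with $u_S$ carrying the source (zero data) and $u_H$ the Dirichlet data (no source). The two estimates that make the argument work, using~\eqref{eqnEffectiveDiffusivityAsym} and~\eqref{eqnHomogAssumption}, are: (i) the source lives on a shell of width $h$ and $\trace(\bar\sigma(A))\asymp\sqrt A$, so $\norm{u_S}_{L^\infty}\asymp h^2/\sqrt A=o(h^{3/2}/\sqrt A)$ for small $h$; and (ii) $0\le u_H\le c_2h^{3/2}/\sqrt A$ by the maximum principle, and near $\del B_{1-2h}$ the gap $c_2h^{3/2}/\sqrt A-u_H$ grows linearly with rate $\asymp h^{1/2}/\sqrt A$, which strictly dominates the linear growth rate $\asymp h/\sqrt A$ of $u_S$ there. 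Combining (i)--(ii) gives $p\approx u\le c_2h^{3/2}/\sqrt A$ throughout, strictly so away from $\del B_{1-2h}$; the correctors are controlled exactly as in the proof of Lemma~\ref{lmaSupSolbis}, via the drift-independent estimate of Lemma~\ref{lmaDriftIndLinfBound} and the corrector asymptotics of~\cite{bblFannjiangPapanicolaou}, and contribute only $O(L^{-\alpha/4}/\sqrt A)$. Comparison then yields $\theta_1\le\bar\theta\le c_2h^{3/2}/\sqrt A$ outside a collar of $\del B_{1-2h}$ of thickness $o(h)$; and on that collar --- which lies in the \emph{source-free} shell $\annulus_{1-2h,1-h}$ --- one finishes with the maximum principle for the homogeneous equation, the bound being already known on $\del B_{1-2h}$ and on the sphere bounding the collar from inside.

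The main obstacle is the construction of $p$. Lemma~\ref{lmaSupSolbis}'s scheme requires $p$ to be a quadratic \emph{polynomial in the Cartesian variables} on each piece, so that the $O(1)$ corrector equation closes without producing a term $A\grad_x(\text{next corrector})$. For $u_H$ this is harmless: on the thin annulus $\log\abs{x}=\tfrac12(\abs{x}^2-1)+O(h^2)$, so $u_H$ may be replaced by an honest quadratic with error $o(h^{3/2}/\sqrt A)$. The difficulty is $u_S$: its curvature is concentrated in the shell $\annulus_{1,1-h}$, i.e.\ it is a parabola in $\abs{x}$ with vertex near $\abs{x}=1-h$, whereas a \emph{radial} quadratic $\alpha+\beta\abs{x}^2$ is essentially affine on a thin shell, so reproducing the needed curvature $\asymp1/\sqrt A$ forces a variation $\asymp h/\sqrt A$ --- not $h^2/\sqrt A$ --- across the shell, which is too large. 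I would get around this by partitioning the annulus into $O(1/h)$ thin angular sectors and using in each a quadratic of the form $a+\gamma\,(x\cdot\hat n)+\beta\,(x\cdot\hat n)^2$ adapted to that sector's outward unit vector $\hat n$ (on a thin sector $x\cdot\hat n=\abs{x}+O(h^2)$, which lets one place the vertex inside the shell). Tracking the $O(h^2/\sqrt A)$ geometric error in each sector, and gluing the sector profiles across the radial interfaces --- say by a minimum-and-smooth, or by a sector-by-sector comparison --- without breaking the super-solution inequality or the slope comparison near $\del B_{1-2h}$, is the delicate part, and is also why this construction produces the power $h^{3/2}$ rather than the optimal $h^2$.
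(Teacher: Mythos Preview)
Your reduction to the pointwise bound and your decomposition $u=u_S+u_H$ are both correct, and the estimates (i)--(ii) you identify are exactly the ones that drive the argument. Your sector-partition workaround for the quadratic-profile constraint is also sound; it is the same device the paper uses in Lemma~\ref{lmaEta} (rectangles there rather than sectors). So the approach would go through.

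The paper's route is, however, considerably cleaner, and the simplification comes from one observation you miss. Instead of building a single multi-scale super-solution for $\theta_1$, the paper splits at the level of the \emph{actual} PDE: let $\eta'$ solve the homogeneous equation on $\annulus_{1-2h,1}$ with the same Dirichlet data as $\theta_1$, so that $\theta_1-\eta'$ has zero boundary data and right-hand side $\leq 1$, whence $\theta_1-\eta'\leq\tauann$ by comparison. The $\tauann$ bound is already available (Lemma~\ref{lmaAnnExitTime}, where the quadratic/rectangle work is encapsulated), and it gives $\tauann\leq c_1(2h)^{3/2}/\sqrt{A}$ with a constant $c_1$ \emph{independent of} $c_2$. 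The key point is the bound on $\eta'$: because $\eta'$ satisfies the \emph{homogeneous} equation, a \emph{linear} slow profile $\eta_0(x)=\tfrac{c_2\sqrt{h}}{2\sqrt{A}}(1-x_1)$ (and its rotations) already gives an exact solution of the expanded equation, so no second corrector and no sector partition are needed at all for this piece. This yields $\eta'(x)\leq \tfrac{c_2\sqrt{h}}{2\sqrt{A}}(1-\abs{x})+O(\sqrt{h}A^{-1/2}L^{-\alpha/4})$, hence $\eta'\leq \tfrac{c_2}{2\sqrt{A}}h^{3/2}+\text{small}$ on $\annulus_{1-h,1}$. Combining, on the outer shell $\theta_1\leq c_1(2h)^{3/2}/\sqrt{A}+\tfrac{c_2}{2\sqrt{A}}h^{3/2}+\text{small}$, and since $c_1$ does not depend on $c_2$ one simply takes $c_2>2^{5/2}c_1$ to force this below $c_2h^{3/2}/\sqrt{A}$. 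The final maximum-principle step on the source-free inner shell is the same as yours.

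So what your approach buys is self-containment (you do not invoke Lemma~\ref{lmaAnnExitTime}); what the paper's buys is that the only piece requiring the delicate quadratic construction is isolated in a separate lemma, while the proof of Lemma~\ref{lmaTheta1Bound} itself reduces to a linear super-solution and a choice of $c_2$.
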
%}}}

Now, postponing the proof of Lemma~\ref{lmaTheta1Bound}, we prove~\eqref{eqnThetaBound2}. Choose $h$ small, and $A, L$ large, as guaranteed by Lemma~\ref{lmaTheta1Bound}, and define $\bar\theta$ by
$$
\bar\theta(x) =
\begin{dcases}
\theta_1(x) & \text{when } \abs{x} \geq 1 - 2h\\
\frac{c_2}{\sqrt{A}} h^{3/2}  & \text{when } \abs{x} < 1 - 2h.
\end{dcases}
$$
where $\theta_1$ and $c_2$ are as in Lemma~\ref{lmaTheta1Bound}. Then $\bar\theta \in C(B_1) \cap C^2(B_{1-2h} \cup \annulus_{1-2h,1})$, and
$$
\left(-\lap + A L v(Lx) \cdot \grad\right) \bar\theta (x) =
\begin{dcases}
1 &\text{when } \abs{x} \geq 1 - h\\
0 &\text{when } \abs{x} < 1 - h \;\&\; \abs{x} \neq 1-2h
\end{dcases}
$$
Further, when $\abs{x} = 1- 2h$,
$$
\frac{\del \bar\theta}{\del r^-}  = 0
\quad\text{and}\quad
\frac{\del \bar\theta}{\del r^+}  \leq 0
$$
where the second inequality follows from Lemma~\ref{lmaTheta1Bound}. Thus $\bar\theta$ is a viscosity super solution to the PDE
$$
\left\{
\begin{aligned}
-\lap \theta + A L v(Lx) \cdot \grad \theta &= \chi_{\annulus_{1, 1-h}} &&\text{in } B_1\\
\theta &= 0 &&\text{on }\del B_1.
\end{aligned}\right.
$$
By the comparison principle, we must have $\bar\theta \geq \theta$, which immediately proves~\eqref{eqnThetaBound2}. 
From this~\eqref{eqnThetaBound1} follows, and using~\eqref{eqnTau1Bound} we obtain~\eqref{eqnPsiBound}, 
concluding the proof.
\end{proof}%}}}

It remains to prove Lemma~\ref{lmaTheta1Bound}. Roughly speaking, if we choose the constant $c_2$ sufficiently large, the function $\theta_1$ is nearly harmonic. The inhomogeneity of the boundary conditions dominates the right side of the 
equation. A ``nearly harmonic''  function should attain its maximum on the boundary, implying the conclusion of Lemma~\ref{lmaTheta1Bound}.

The reason we believe the constant $c_2 h^{3/2} / \sqrt{A}$ is large enough, is because the homogenized exit time from the annulus is quadratic in the width of the annulus. Unfortunately, the slow profile is not quadratic in Cartesian coordinates, and so the best we can do is obtain upper and lower bounds, which need not be sharp. We begin by showing that the expected exit time from an annulus of width $h$ grows like $h^{3/2}$. While we certainly don't expect the exponent $3/2$ to be sharp, any exponent strictly larger than $1$ will suffice for our needs.
\begin{lemma}\label{lmaAnnExitTime}%{{{
Let $\annulus_{1-h, 1}$ be the annulus $\annulus_{1-h,1}\defeq B_1 \setminus
B_{1-h}$, and $\tauann$ be the solution of the Poisson problem
\begin{equation}\label{eqnTauAnn}
\left\{
\begin{aligned}
-\lap \tauann + A L v(Lx) \cdot \grad \tauann &= 1 &&\text{in } \annulus_{1-h,1}\\
\tauann &= 0 &&\text{on }\del \annulus_{1-h, 1}
\end{aligned}\right.
\end{equation}
Suppose $L$ and $A$ vary so that~\eqref{eqnHomogAssumption} holds. Then there exists constants 
$h_0 > 0$ and $c > 0$ such that for all $h < h_0$, there exists $A_0 = A_0(h)$ and $L_0 = L_0(h)$ such that
$$
\norm{\tauann}_{L^\infty(\annulus_{1-h, 1})} \leq \frac{c_1}{\sqrt{A}} h^{3/2}
$$
provided $L \geq L_0$, $A \geq A_0$ and~\eqref{eqnHomogAssumption} holds.
\end{lemma}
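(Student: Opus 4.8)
\textit{The plan.} I would prove the pointwise bound $\tauann(x_0)\le c_1 h^{3/2}/\sqrt A$ at an \emph{arbitrary} point $x_0\in\annulus_{1-h,1}$, by producing, for each $x_0$, a super-solution of~\eqref{eqnTauAnn} on a thin ``tangent box'' containing $x_0$ whose value at $x_0$ has the right size. After a rotation we may take $x_0=(r_0,0)$ with $r_0\in[1-h,1]$. Let $U$ be the connected component of $\annulus_{1-h,1}\cap\{\abs{x_2}\le\sqrt h\}$ containing $x_0$. A short computation with the curvatures of the two circles shows that, for $h$ small, $U\subset S:=\{1-2h\le x_1\le 1\}\cap\{\abs{x_2}\le\sqrt h\}$, and that $\del U$ consists of an arc on $\del B_1$, an arc on $\del B_{1-h}$ (both lying in $\del\annulus_{1-h,1}$, where $\tauann=0$), and two ``cutting segments'' on $\{x_2=\pm\sqrt h\}$ lying in the interior of the annulus.

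\textit{The super-solution.} On $S$ I take the quadratic slow profile
\[
G_0(x)=\tfrac{\kappa}{2}\,(x_1-(1-2h))(1-x_1)+\tfrac{C}{\sqrt A}\,x_2^2,\qquad \kappa=\tfrac{c'}{\sqrt A},
\]
with $C,c'>0$ absolute constants to be fixed, whose Hessian is the \emph{constant} diagonal matrix $\operatorname{diag}(-\kappa,2C/\sqrt A)$. Because the Hessian is constant, the multi-scale construction of Lemma~\ref{lmaSupSolbis} goes through verbatim: setting $\tilde G=G_0+\tfrac1L G_1(x,Lx)+\tfrac1{L^2}G_2(Lx)$ with $G_1=\sum_j\chi_j(y)\,\del_{x_j}G_0$ and $G_2$ the periodic mean-zero solution of the analogue of~\eqref{eqnVarphi2primebis}, the same computation as in Lemma~\ref{lmaSupSolbis} gives
\[
-\lap\tilde G+ALv(Lx)\cdot\grad\tilde G=\kappa\,\bar\sigma_{11}(A)-\tfrac{2C}{\sqrt A}\,\bar\sigma_{22}(A)=\sigma_0(c'-2C)(1+o(1))\quad\text{as }A\to\infty
\]
by~\eqref{eqnEffectiveDiffusivityAsym}; choosing $c'-2C\ge 2/\sigma_0$ makes the right-hand side $\ge 1$ for $A$ large. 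The key point is that $\operatorname{Hess}G_0=O(1/\sqrt A)$, so every corrector estimate of Lemma~\ref{lmaSupSolbis} gains a factor $1/\sqrt A$: one obtains $\norm{G_1}_{L^\infty}=O(\sqrt h/\sqrt A)$ (the $x_2$-derivative dominates) and $\norm{G_2}_{L^\infty}=O(A^{1/2-1/(2p)})$ with $p$ chosen exactly as in the proof of Lemma~\ref{lmaSupSolbis} (the constraint needed here is in fact slightly weaker). Hence, with $\eps:=\tfrac1L\norm{G_1}_{L^\infty}+\tfrac1{L^2}\norm{G_2}_{L^\infty}$, which tends to $0$ as $L\to\infty$ for fixed $h$, the function $\bar\tau:=\tilde G+\eps$ satisfies $-\lap\bar\tau+ALv(Lx)\cdot\grad\bar\tau\ge 1$ in $U$.

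\textit{Comparison on $\del U$ and conclusion.} Since $\tilde G\ge G_0-\eps$, we have $\bar\tau\ge G_0$ on $\del U$. On the two boundary arcs $x_1\in[1-2h,1]$, so both factors in the first term of $G_0$ are nonnegative and $\bar\tau\ge 0=\tauann$ there. On the cutting segments $\bar\tau\ge\tfrac{C}{\sqrt A}x_2^2=Ch/\sqrt A$, whereas comparing the annulus with the disk $B_1$ and using~\eqref{eqnTau1Bound} gives $\tauann\le\tau_1\le\tfrac{2h-h^2}{2\trace\bar\sigma(A)}+O(L^{-\alpha/4}/\sqrt A)\le h/(\sigma_0\sqrt A)$ for $L$ large; so $C=2/\sigma_0$ (hence $c'=6/\sigma_0$) makes $\bar\tau\ge\tauann$ on the cutting segments too. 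The comparison principle then gives $\tauann\le\bar\tau$ on $U$, and
\[
\tauann(x_0)\le\bar\tau(x_0)=G_0(x_0)+O(\eps)\le\kappa h^2+O(\eps)=\frac{c'h^2}{\sqrt A}+O(\eps)\le\frac{c_1}{\sqrt A}\,h^{3/2}
\]
for $L$ large, since $h^2\le h_0^{1/2}h^{3/2}$ and $\eps=o(h^{3/2}/\sqrt A)$ as $L\to\infty$. As $x_0$ was arbitrary this is the claim, with $c_1$ independent of $h,A,L$.

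\textit{Main obstacle.} The genuine difficulty — and the reason the slow profile must be quadratic — is the last order of the expansion: if $\operatorname{Hess}G_0$ were not constant, $G_2$ would depend on the slow variable $x$ and the leftover term $\tfrac{A}{L}\,v(Lx)\cdot\grad_x G_2$ in the equation for $\tilde G$ would be of size $\gg 1$, uncontrollable under~\eqref{eqnHomogAssumption}. This rules out using an honest radial profile for the annulus (whose Hessian varies by $O(1)$ around it) and forces the localization to a flat box; the two features that make a \emph{single} box suffice are the extra $1/\sqrt A$ from $\operatorname{Hess}G_0=O(1/\sqrt A)$, which kills $\tfrac1{L^2}\norm{G_2}_{L^\infty}$ after dividing by $L^2$, and the use of the disk exit time of Proposition~\ref{prop-disk} as a barrier on the artificial (cutting) part of $\del U$.
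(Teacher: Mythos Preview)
Your proof is correct and takes a genuinely different route from the paper. The paper proceeds indirectly: it writes $\tauann\le\tau_1-\eta'+O(L^{-\alpha/4}/\sqrt A)$, where $\eta'$ solves the drift-harmonic problem on the annulus with boundary value $(2h-h^2)/(2\trace\bar\sigma)$ on $\del B_{1-h}$, and then spends most of its effort on Lemma~\ref{lmaEta}, which gives a linear \emph{lower} bound for $\eta'$ via a local multi-scale \emph{sub}-solution on a thin rectangle. Subtracting this linear lower bound from the parabolic profile of $\tau_1$ yields the $h^{3/2}$ estimate. Your argument bypasses the auxiliary function $\eta'$ and Lemma~\ref{lmaEta} entirely: you build a local multi-scale \emph{super}-solution for $\tauann$ itself on the same kind of thin box, and invoke $\tau_1$ only as a crude barrier on the two artificial edges. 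This is more direct and in fact delivers the sharper $O(h^2/\sqrt A)$ at the main term; both arguments ultimately rest on the same two ingredients --- a quadratic slow profile with constant Hessian so that the second corrector is purely fast, and the disk exit-time bound~\eqref{eqnTau1Bound}.

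One point you should tighten: the reduction ``after a rotation we may take $x_0=(r_0,0)$'' is not literally valid because the operator $-\lap+ALv(Lx)\cdot\grad$ is not rotation-invariant. What you actually need is, for each $x_0$, to take the box aligned with the radial direction through $x_0$ and write $G_0$ in the \emph{original} coordinates; it is then still quadratic with constant Hessian, and the expansion goes through. The PDE computation survives because for this flow $\langle v_1\chi_2\rangle=\langle v_2\chi_1\rangle=0$ by parity and $\bar\sigma_{11}=\bar\sigma_{22}$, so the right-hand side equals $\bar\sigma_{11}\trace(\operatorname{Hess}G_0)$, which is rotation-invariant; the second-corrector bound also needs the off-diagonal pieces $v_i\chi_j$, $i\ne j$, but these obey the same $L^p$ estimates as the diagonal ones. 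The paper handles exactly this point at the end of its proof of Lemma~\ref{lmaEta}, and the same remark applies to your construction.
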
%}}}
\begin{proof}%{{{
The main idea behind the proof is that as $A, L \to \infty$, we know that $\tau_1$ tends to an explicit (homogenized)
parabolic profile and is 
constant on $\del B_{1-h}$. Now if we subtract off a harmonic function with these boundary values, then we should get a 
super solution for $\tauann$. Finally, we will show that a harmonic function with constant boundary values grows 
linearly near $\del B_1$, at the same rate as $\tau_1$. Thus the above super solution will give an upper bound for $\tauann$ which is super-linear
in the annulus width.

We proceed to carry out the details. Let $\eta'$ be the solution of
\begin{equation}\label{eqnEta}
\left\{
\begin{aligned}
-\lap \eta' + A L v(Lx) \cdot \grad \eta' &= 0 &&\text{in } \annulus_{1-h,1}\\
\eta' &= 0 &&\text{on }\del B_1\\
\eta' &= \frac{2h - h^2}{2 \trace(\bar\sigma(A))} &&\text{on }\del B_{1-h},
\end{aligned}\right.
\end{equation}
and define
$$
\bartauann = \tau_1 - \eta' + \frac{c_3 L^{-\alpha/4}}{\sqrt{A}},
$$
where $c_3$ is as in~\eqref{eqnTau1Bound}. Then $\bartauann$ satisfies
$$
\left\{
\begin{aligned}
-\lap \bartauann + A L v(Lx) \cdot \grad \bartauann &= 1 &&\text{in } \annulus_{1-h,1}\\
\bartauann &\geq 0 &&\text{on }\del B_1\\
\bartauann &\geq 0 &&\text{on }\del B_{1-h}.
\end{aligned}\right.
$$
The first boundary condition follows because both $\eta'$ and $\tau_1$ are $0$ on $\del B_1$. The second follows 
from~\eqref{eqnTau1Bound} and the boundary condition for $\eta'$. Thus, the maximum principle immediately implies that 
$\bartauann \geq \tauann$. 

Since~\eqref{eqnTau1Bound}  gives the asymptotics for $\tau_1$, to conclude the proof we need a lower bound on $\eta'$ that is `linear' in the radial direction near $\partial B_1$. We separate this estimate as a lemma.
\begin{lemma}\label{lmaEta}%{{{
Let $\eta$ be the solution of
\begin{equation}\label{eqnEtaNormalized}
\left\{
\begin{aligned}
-\lap \eta + A L v(Lx) \cdot \grad \eta &= 0 &&\text{in } \annulus_{1-h,1}\\
\eta &= 0 &&\text{on }\del B_1\\
\eta &= h &&\text{on }\del B_{1-h},
\end{aligned}\right.
\end{equation}
Then there exists a constant $c$, independent of $h$, $A$ and $L$, such that
\begin{equation}\label{071202}
\eta(x) \geq 1 - \abs{x} - c \left( h^{3/2} + \frac{L^{-\alpha/4}}{\sqrt{h}} \right)
\end{equation}
when $L$ and $A$ are sufficiently large.
\end{lemma}%}}}
Returning to the proof of Lemma~\ref{lmaAnnExitTime}, we see that Lemma~\ref{lmaEta} gives
$$
\eta'(x) \geq \frac{2 - h }{2\trace(\bar\sigma(A))}  \left[ 1 - \abs{x} - c \left( h^{3/2} + \frac{L^{-\alpha/4}}{\sqrt{h}} \right) \right].
$$
Now using the above and~\eqref{eqnEffectiveDiffusivityAsym}, it follows that
\begin{align*}
\tauann(x) &\leq \bartauann(x) = \tau_1(x) - \eta'(x) + \frac{c L^{-\alpha/4}}{\sqrt{A}}\\
    &\leq \frac{1}{2\trace(\bar\sigma(A))} \left( 1 - \abs{x}^2 - (2 - h)(1 - \abs{x}) + c h^{3/2}\right) + \frac{1}{\sqrt{A}}\left( c L^{-\alpha/4} + \frac{c L^{-\alpha/4}}{\sqrt{h}} \right) \\
    &= \frac{1}{2\trace(\bar\sigma(A))} \left( \left( 1 - \abs{x} \right) \left(h - (1 - \abs{x}) \right)+ c h^{3/2} \right) + \frac{1}{\sqrt{A}}\left( \frac{c L^{-\alpha/4}}{\sqrt{h}} \right) \\
    & \leq \frac{1}{2\trace(\bar\sigma(A))} \left( h^2 + c h^{3/2} \right) + \frac{1}{\sqrt{A}}\left( \frac{c L^{-\alpha/4}}{\sqrt{h}} \right)
\end{align*}
obtaining Lemma~\ref{lmaAnnExitTime} as desired.
\end{proof}%}}}

To complete the proof of Lemma~\ref{lmaAnnExitTime}, we need to prove Lemma~\ref{lmaEta}. We do this next.

\begin{proof}[Proof of Lemma~\ref{lmaEta}]%{{{
We will construct a sub-solution of equation~\eqref{eqnEtaNormalized} in small rectangles overlapping $\annulus_{1-h,1}$. 
For convenience, we now shift the origin to $(-1,0)$, and consider new coordinates $(x_1', x_2') \defeq (x_1 + 1, x_2)$. 
In these coordinates, let $R$ be the rectangle of height $2 h^{3/4}$, width $h$ and top left corner $(\rho_0, h^{3/4})$, 
where $\rho_0 = 1 - (1 - h^{3/2})^{1/2}$ (see Figure~\ref{fgrEta}).

\begin{figure}[htb]
\center{\includegraphics[width=5cm]{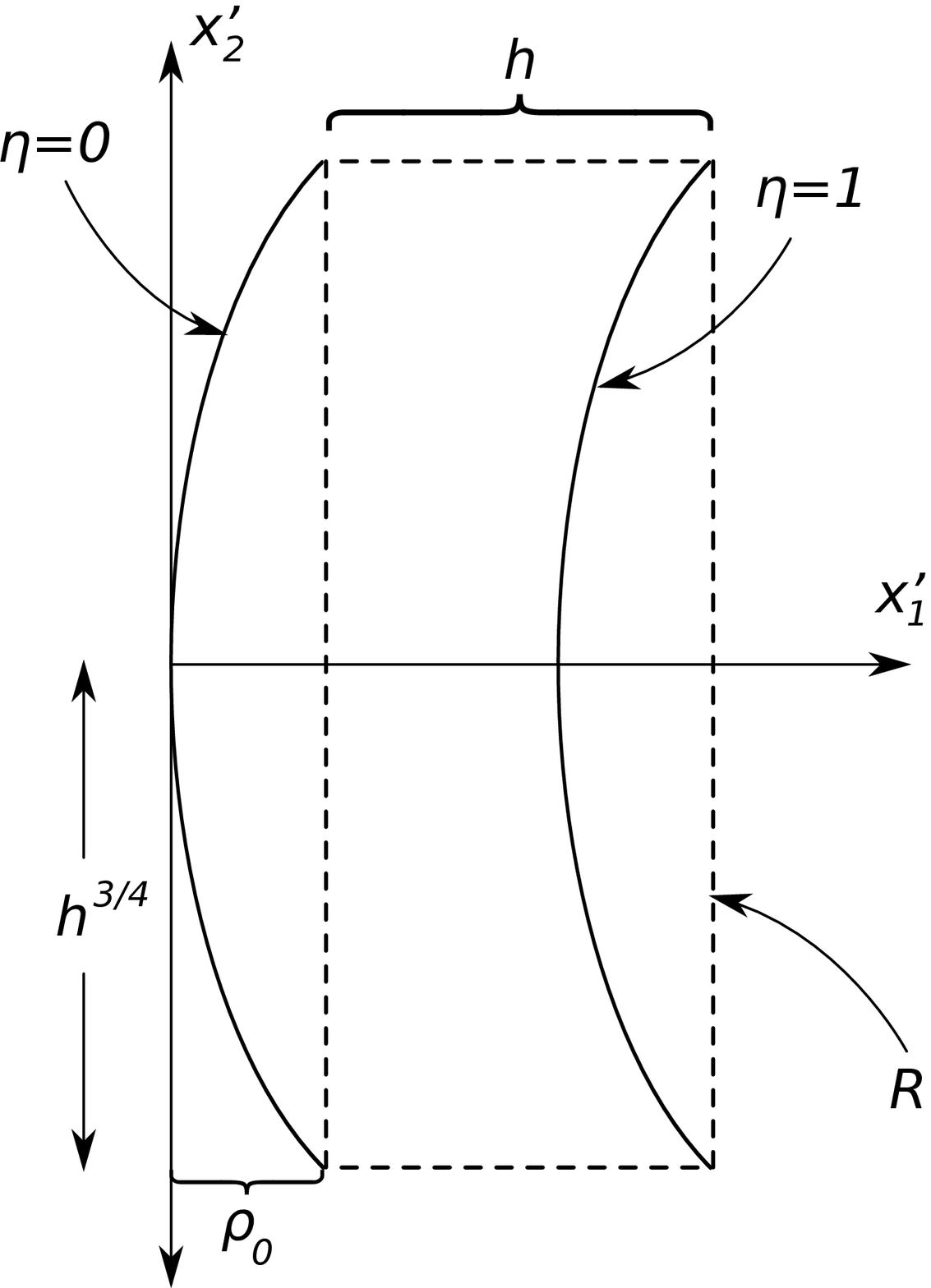}}
\caption{Domain for $\etal$.}\label{fgrEta}
\end{figure}

We will construct a function $\etal$ such that
\begin{equation}\label{eqnEtal}
\left\{
\begin{aligned}
-\lap \etal + A L v(Lx) \cdot \grad \etal &= 0 &&\text{in } \annulus_{1-h,1},\\
\etal &\leq h &&\text{on the right boundary of } R,\\
\etal &\leq 0 &&\text{on the other three boundaries of } R,
\end{aligned}\right.
\end{equation}
and $\etal$ satisfies the linear growth condition
\begin{equation}\label{eqnEtalLowerBound}
\etal(x_1', 0) \geq x_1' - c\left(h^{3/2} + \frac{L^{-\alpha/4}}{\sqrt{h}}\right)\quad\text{when } x_1' \in [ \rho_0, \rho_0 + h],
\end{equation}
for some constant $c$ independent of $L$, $A$ and $h$.

Before proving that the function $\etal$ exists, we remark that by the maximum principle, we $\etal \leq \eta$ on 
$R \cap \annulus_{1-h,h}$. Moreover, as $\rho_0=O(h^{3/2})$, the estimate \eqref{071202} can be extended 
to $x_1'\in[0,\rho_0]$ as well, possibly by increasing the constant $c$.
This proves Lemma~\ref{lmaEta} when $x$ is on the negative $x_1$-axis. Now, if~\eqref{eqnEtalLowerBound} 
is still valid when the coordinate frame is rotated, our proof of Lemma~\ref{lmaEta} will be complete!\medskip

We will first prove that a function $\etal$ satisfying~\eqref{eqnEtal} and~\eqref{eqnEtalLowerBound} exists. 
We will do this by a multi-scale expansion. Let
$$
\etal(x') = \eta_0(x') + \frac{1}{L} \eta_1(x',y) + \frac{1}{L^2}\eta_2(y) - \frac{c_0 L^{-\alpha/4}}{\sqrt{h}},
$$
where $y = Lx'$ is the fast variable, $\eta_0$ is given by
$$
\eta_0(x_1', x_2') = x_1' - c_1 \rho_0 + \frac{( (x_1')^2 - (x_2')^2 )}{\sqrt{h}},
$$
and $c_0$, and $c_1$ are constants, each independent of $L$, $A$ and $h$, to be chosen later. As before, $\eta_1$ is
$$
\eta_1(x',y) = \sum_{i=1}^2 \chi_i(y) \del_{x_i'} \eta_0(x'),
$$
and $\eta_2$ is the mean $0$, periodic solution to
\begin{equation}\label{071208}
-\lap_y \eta_2 + A v(y) \cdot \grad_y \eta_2 = \sum_{i=1}^2 \left[ 2 \del_{y_i} \chi_j - A \left( v_i \chi_j - \average{v_i \chi_j} \right) \right] \del_{x_i'}\del_{x_j'} \eta_0,
\end{equation}
where $\chi_i$ are the solutions to~\eqref{eqnChi1bis}.

Again, the crucial fact here is that since $\eta_0$ is quadratic, the second derivatives are constant and $\eta_2$ 
becomes independent of the slow variable $x'$. Using this, a direct computation shows that
\begin{equation}\label{071206}
-\lap \etal + A L v \cdot \grad \etal
    %= -\lap_{x'} \left( \eta_0 + \frac{1}{L} \eta_1 \right) + A \average{v \cdot \grad_x \eta_1} \\
    = -\lap \eta_0  + A \sum_{i,j} \average{v_i \chi_j} \del_{x_i'} \del_{x_j'} \eta_0.
\end{equation}
Note that by symmetry, $\average{v_1 \chi_1} = \average{v_2 \chi_2}$, and $\average{v_1 \chi_2} = \average{v_2 \chi_1} = 0$.
Further, by our choice of $\eta_0$, we have $\lap \eta_0 = 0$. Hence, the previous equation reduces to
$$
-\lap \etal + A L v \cdot \grad \etal = 0,
$$
as required by the first equation in~\eqref{eqnEtal}.

Next, we show that if $c_1$ is appropriately chosen, we can arrange the boundary conditions claimed 
in~\eqref{eqnEtal} for $\eta_0$. Notice that $\rho_0 = O(h^{3/2})$, and on the top and bottom boundary 
we have $x_2' = \pm h^{3/4}$ and $x_1' \in (\rho_0, \rho_0 + h)$. Thus
$$
x_1' - \frac{(x_2')^2}{\sqrt{h}} \leq \rho_0 \quad\text{and}\quad
\frac{(x_1')^2}{\sqrt{h}} \leq O(h^{3/2}).
$$
So choosing $c_1$ large enough, we can ensure $\eta_0 \leq 0$ on the top and bottom of $R$.

On the left of $R$, we have $x_1' = \rho_0$ and $\abs{x_2} \leq h^{3/4}$. So $(x_1')^2 / \sqrt{h} \leq O(h^{5/2}) = o(\rho_0)$, 
and choosing $c_1$ large we can again ensure $\eta_0 \leq 0$ on the left of $R$. Finally, on the right of $R$, we have 
$x_1' = \rho_0 + h$ and $\abs{x_2'} \leq h^{3/4}$, and we immediately see that for $c_1$ large enough, we have $\eta_0 \leq h$ 
on the right of $R$. Thus $\eta_0$ satisfies the boundary conditions in~\eqref{eqnEtal}.

To see that $\etal$ also satisfies the boundary conditions in~\eqref{eqnEtal}, we need to bound the correctors appropriately. 
Exactly as in the proof of Lemma~\ref{lmaSupSolbis}, we obtain 
\begin{equation}\label{071210}
\norm*{\frac{1}{L} \eta_1 + \frac{1}{L^2} \eta_2}_{L^\infty} \leq \frac{c L^{-\alpha / 4}}{\sqrt{h}},
\end{equation}
where $c > 0$ is independent of $L$, $A$ and $h$. We remark that the extra ${1}/{\sqrt{h}}$ factor arises because 
derivatives of $\eta_0$ are of the order ${1}/{\sqrt{h}}$ and they appear as multiplicative factors in the expressions for 
$\eta_1$ and $\eta_2$.

Consequently, if $c_0$ is chosen to be larger than $c$, we have
\begin{equation}\label{eqnEtaCorrectorBounds}
\frac{1}{L} \eta_1 + \frac{1}{L^2} \eta_2 - \frac{ c_0 L^{-\alpha / 4}}{\sqrt{h}} \leq 0.
\end{equation}
Since $\eta_0$ already satisfies the boundary conditions in~\eqref{eqnEtal}, this immediately implies that $\etal$ must also satisfy 
these boundary conditions. Finally, since $\eta_0$ certainly satisfies~\eqref{eqnEtalLowerBound}, it follows 
from~\eqref{eqnEtaCorrectorBounds}  that $\etal$ also satisfies~\eqref{eqnEtalLowerBound}.
This proves the existence of $\etal$.

Now, as remarked earlier, the only thing remaining to complete the proof of the Lemma is to verify that if the rectangle $R$, 
and the coordinate frame are both rotated arbitrarily about the center of the annulus, then there still exists a function $\etal$ 
satisfying~\eqref{eqnEtal} and~\eqref{eqnEtalLowerBound} in new coordinates. This, however, is immediate. The new 
coordinates can be expressed in terms of the old coordinates as a linear function. Consequently, our initial profile for $\eta_0$ 
will still be a \textit{quadratic} function of the new coordinates. Of course, by the rotational invariance of the Laplacian, 
it will also be harmonic, and the remainder of the proof goes through nearly verbatim. The only modification is that after
the rotation the mixed derivative $\partial_{x_1'}\partial_{x_2'}\eta_0$ no longer vanishes, and the terms involving 
$v_1\chi_2$ and $v_2\chi_1$ do appear in \eqref{071208} and \eqref{071206}. However, they can be treated in an identical fashion,
as in the proof of Lemma~~\ref{lmaSupSolbis} using the precise asymptotics for $\chi_{1}$  and $\chi_2$
from~\cite{bblNovikovPapanicolaouRyzhik}. This concludes the proof.
\end{proof}%}}}

Finally, we are ready for the proof of Lemma~\ref{lmaTheta1Bound}.

\begin{proof}[Proof of Lemma~\ref{lmaTheta1Bound}]%{{{
For a given $c_2$, let $\eta'$ be the solution of
$$
\left\{
\begin{aligned}
-\lap \eta' + A L v(Lx) \cdot \grad \eta' &= 0 &&\text{in } \annulus_{1-2h,1}\\
\eta' &= 0 &&\text{on }\del B_1\\
\eta' &= \frac{c_2}{\sqrt{A}} h^{3/2} &&\text{on }\del B_{1-2h},
\end{aligned}\right.
$$
Then, we have
\[
\theta_1 - \eta' = 0 \hbox{ on $\del \annulus_{1-2h,1}$},
\]
and 
\[
-\lap\theta_1 + A L v(Lx) \cdot \grad \theta_1 \leq 1 \hbox{ in $\annulus_{1-2h,1}$.}
\]
Consequently $\theta_1 - \eta' \leq \tauann$, where $\tauann$ is the solution of~\eqref{eqnTauAnn} on the annulus $\annulus_{1-2h, 1}$. Thus applying Lemma~\ref{lmaAnnExitTime}, we see
\begin{equation}\label{eqnTheta1EtaPrimeBound}
\theta_1(x) \leq \frac{c_1}{\sqrt{A}} (2h)^{3/2} + \eta'(x)
\end{equation}

The function $\eta'$ decreases at most linearly with $\abs{x}$. This can be seen immediately from an asymptotic expansion for a super solution. Indeed, starting with
$$
\eta_0(x) = \frac{c_2\sqrt{h}}{2\sqrt{A}} (1 - x_1)
$$
and choosing $\eta_1$ and $\eta_2$ as in the proof of Lemma~\ref{lmaEta}, we immediately see that
$$
\eta'(x_1, 0) \leq \frac{c_2\sqrt{h}}{2\sqrt{A}} (1 - x_1) + c \sqrt{h} A^{-1/2} L^{-\alpha/4}.
$$
We remark again that the extra $\sqrt{h}A^{-1/2}$ factor arises from the gradient of $\eta_0$. Now by rotating the 
initial profile $\eta_0$ appropriately, we obtain the linear decrease
\begin{equation}\label{eqnEtaPrimeLinearDecrease}
\eta'(x) \leq \frac{c_2\sqrt{h}}{2\sqrt{A}} (1 - \abs{x}) + c \sqrt{h} A^{-1/2} L^{-\alpha/4}.
\end{equation}
as claimed.

We claim that~\eqref{eqnTheta1EtaPrimeBound} and~\eqref{eqnEtaPrimeLinearDecrease} quickly conclude the proof. 
To see this, note first that equation~\eqref{eqnTheta1EtaPrimeBound} and~\eqref{eqnEtaPrimeLinearDecrease} immediately give
\begin{equation}\label{eqnTheta1Bound2}
\theta_1(x) \leq \frac{c_1}{\sqrt{A}} (2h)^{3/2} + \frac{c_2}{2 \sqrt{A}} h^{3/2} + c \sqrt{h} A^{-1/2} L^{-\alpha/4}
\quad\text{whenever } x\in \annulus_{1-h,1}.
\end{equation}
However, since 
\[
-\lap\theta_1 + A L v(Lx) \cdot \grad\theta_1 = 0
\]
 on $\annulus_{1-2h,1-h}$, the maximum principle implies that $\theta_1$ can not attain it's maximum in the interior of the 
annulus $\annulus_{1-2h,1-h}$. Consequently~\eqref{eqnTheta1Bound2} must hold on the interior of the \textit{entire} 
annulus $\annulus_{1-2h,1}$.

Now if we choose $c_2$ large enough so that $2^{3/2} c_1 < \frac{c_2}{4}$, and then choose $L, A$ large enough so that 
$c A^{-1/2} L^{-\alpha/4} < \frac{c_2}{4}$, we see that $\theta_1$ is forced to attain it's maximum on the inner boundary 
$\del B_{1-2h}$, and the Lemma follows immediately.
\end{proof}%}}}

\begin{bibdiv}%{{{1
\begin{biblist}

\bib{AlCap}{article}{
   author={Allaire, Gr{\'e}goire},
   author={Capdeboscq, Yves},
   title={Homogenization of a spectral problem in neutronic multigroup
   diffusion},
   journal={Comput. Methods Appl. Mech. Engrg.},
   volume={187},
   date={2000},
   number={1-2},
   pages={91--117},
   issn={0045-7825},
   review={\MR{1765549 (2001b:82058)}},
   doi={10.1016/S0045-7825(99)00112-7},
}

\bib{BLP}{book}{
   author={Bensoussan, Alain},
   author={Lions, Jacques-Louis},
   author={Papanicolaou, George},
   title={Asymptotic analysis for periodic structures},
   series={Studies in Mathematics and its Applications},
   volume={5},
   publisher={North-Holland Publishing Co.},
   place={Amsterdam},
   date={1978},
   pages={xxiv+700},
   isbn={0-444-85172-0},
   review={\MR{503330 (82h:35001)}},
}

\bib{bblBeresHamelNadirshvilli}{article}{%{{{
   author={Berestycki, Henri},
   author={Hamel, Fran{\c{c}}ois},
   author={Nadirashvili, Nikolai},
   title={Elliptic eigenvalue problems with large drift and applications to
   nonlinear propagation phenomena},
   journal={Comm. Math. Phys.},
   volume={253},
   date={2005},
   number={2},
   pages={451--480},
   issn={0010-3616},
   review={\MR{2140256 (2006b:35057)}},
}%}}}
\bib{bblBerestyckiKiselevNovikovRyzhik}{article}{%{{{
   author={Berestycki, Henri},
   author={Kiselev, A.},
   author={Novikov, Alexei},
   author={Ryzhik, Lenya},
   title={Explosion problem in a flow},
   status={to appear},
}%}}}

\bib{bblChildress}{article}{
   author={Childress, S.},
   title={Alpha-effect in flux ropes and sheets},
   journal={Phys.\ Earth Planet Inter.},
   volume={20},
   year={1979},
   pages={172--180},
}

\bib{bblFannjiangPapanicolaou}{article}{%{{{
   author={Fannjiang, Albert},
   author={Papanicolaou, George},
   title={Convection enhanced diffusion for periodic flows},
   journal={SIAM J. Appl. Math.},
   volume={54},
   date={1994},
   number={2},
   pages={333--408},
   issn={0036-1399},
   review={\MR{1265233 (95d:76109)}},
   doi={10.1137/S0036139992236785},
}%}}}
\bib{bblFannjiangKiselevRyzhik}{article}{%{{{
   author={Fannjiang, A.},
   author={Kiselev, A.},
   author={Ryzhik, L.},
   title={Quenching of reaction by cellular flows},
   journal={Geom. Funct. Anal.},
   volume={16},
   date={2006},
   number={1},
   pages={40--69},
   issn={1016-443X},
   review={\MR{2221252 (2007e:35157)}},
   doi={10.1007/s00039-006-0554-y},
}%}}}

\bib{FW}{book}{
   author={Freidlin, M. I.},
   author={Wentzell, A. D.},
   title={Random perturbations of dynamical systems},
   series={Grundlehren der Mathematischen Wissenschaften [Fundamental
   Principles of Mathematical Sciences]},
   volume={260},
   edition={2},
   note={Translated from the 1979 Russian original by Joseph Sz\"ucs},
   publisher={Springer-Verlag},
   place={New York},
   date={1998},
   pages={xii+430},
   isbn={0-387-98362-7},
   review={\MR{1652127 (99h:60128)}},
}

\bib{bblGorbNamNovikov}{article}{%{{{
   author={Gorb, Yuliya},
   author={Nam, Dukjin},
   author={Novikov, Alexei},
   title={Numerical simulations of diffusion in cellular flows at high
   P\'eclet numbers},
   journal={Discrete Contin. Dyn. Syst. Ser. B},
   volume={15},
   date={2011},
   number={1},
   pages={75--92},
   issn={1531-3492},
   review={\MR{2746477}},
   doi={10.3934/dcdsb.2011.15.75},
}%}}}

\bib{Heinze}{article}{
   author={Heinze, Steffen},
   title={Diffusion-advection in cellular flows with large Peclet numbers},
   journal={Arch. Ration. Mech. Anal.},
   volume={168},
   date={2003},
   number={4},
   pages={329--342},
   issn={0003-9527},
   review={\MR{1994746 (2004e:76037)}},
   doi={10.1007/s00205-003-0256-7},
}

\bib{bblIyerNovkiovRyzhikZlatos}{article}{%{{{
   author={Iyer, Gautam},
   author={Novikov, Alexei},
   author={Ryzhik, Lenya},
   author={Zlato\v{s}, Andrej},
   title={Exit times for diffusions with incompressible drift},
   status={to appear},
   journal={SIAM J. Math. Anal.},
   eprint={arXiv:0911.2294},
   date={2010},
}%}}}

\bib{KOZ}{book}{
   author={Jikov, V. V.},
   author={Kozlov, S. M.},
   author={Ole{\u\i}nik, O. A.},
   title={Homogenization of differential operators and integral functionals},
   note={Translated from the Russian by G. A. Yosifian [G. A. Iosif\cprime
   yan]},
   publisher={Springer-Verlag},
   place={Berlin},
   date={1994},
   pages={xii+570},
   isbn={3-540-54809-2},
   review={\MR{1329546 (96h:35003b)}},
}

\bib{Kes1}{article}{
   author={Kesavan, Srinivasan},
   title={Homogenization of elliptic eigenvalue problems. I},
   language={English, with French summary},
   journal={Appl. Math. Optim.},
   volume={5},
   date={1979},
   number={2},
   pages={153--167},
   issn={0095-4616},
   review={\MR{533617 (80f:65111)}},
   doi={10.1007/BF01442551},
}

\bib{Kes2}{article}{
   author={Kesavan, Srinivasan},
   title={Homogenization of elliptic eigenvalue problems. II},
   language={English, with French summary},
   journal={Appl. Math. Optim.},
   volume={5},
   date={1979},
   number={3},
   pages={197--216},
   issn={0095-4616},
   review={\MR{546068 (80i:65110)}},
   doi={10.1007/BF01442554},
}

\bib{Kifer}{book}{
   author={Kifer, Yuri},
   title={Random perturbations of dynamical systems},
   series={Progress in Probability and Statistics},
   volume={16},
   publisher={Birkh\"auser Boston Inc.},
   place={Boston, MA},
   date={1988},
   pages={vi+294},
   isbn={0-8176-3384-7},
   review={\MR{1015933 (91e:58159)}},
}

\bib{bblKiselevRyzhik}{article}{%{{{
   author={Kiselev, Alexander},
   author={Ryzhik, Leonid},
   title={Enhancement of the traveling front speeds in reaction-diffusion
   equations with advection},
   language={English, with English and French summaries},
   journal={Ann. Inst. H. Poincar\'e Anal. Non Lin\'eaire},
   volume={18},
   date={2001},
   number={3},
   pages={309--358},
   issn={0294-1449},
   review={\MR{1831659 (2002c:35155)}},
   doi={10.1016/S0294-1449(01)00068-3},
}%}}}

\bib{Koralov}{article}{
   author={Koralov, L.},
   title={Random perturbations of 2-dimensional Hamiltonian flows},
   journal={Probab. Theory Related Fields},
   volume={129},
   date={2004},
   number={1},
   pages={37--62},
   issn={0178-8051},
   review={\MR{2052862 (2005e:60179)}},
   doi={10.1007/s00440-003-0320-0},
}

\bib{KramerMajda}{article}{
   author={Majda, Andrew J.},
   author={Kramer, Peter R.},
   title={Simplified models for turbulent diffusion: theory, numerical
   modelling, and physical phenomena},
   journal={Phys. Rep.},
   volume={314},
   date={1999},
   number={4-5},
   pages={237--574},
   issn={0370-1573},
   review={\MR{1699757 (2000g:76052)}},
   doi={10.1016/S0370-1573(98)00083-0},
}

\bib{bblNovikovPapanicolaouRyzhik}{article}{%{{{
   author={Novikov, Alexei},
   author={Papanicolaou, George},
   author={Ryzhik, Lenya},
   title={Boundary layers for cellular flows at high P\'eclet numbers},
   journal={Comm. Pure Appl. Math.},
   volume={58},
   date={2005},
   number={7},
   pages={867--922},
   issn={0010-3640},
   review={\MR{2142878 (2007m:76045)}},
   doi={10.1002/cpa.20058},
}%}}}

\bib{PS}{book}{
   author={Pavliotis, Grigorios A.},
   author={Stuart, Andrew M.},
   title={Multiscale methods},
   series={Texts in Applied Mathematics},
   volume={53},
   note={Averaging and homogenization},
   publisher={Springer},
   place={New York},
   date={2008},
   pages={xviii+307},
   isbn={978-0-387-73828-4},
   review={\MR{2382139 (2010a:60003)}},
}

\bib{bblRosenbluth}{article}{
   author={Rosenbluth, M. N.},
   author={Berk, H. L.},
   author={Doxas, I.},
   author={Horton, W.},
   title={Effective diffusion in laminar convective flows},
   journal={Phys.\ Fluids},
   volume={30},
   year={1987},
   pages={2636--2647},
}
		
\bib{SV1}{article}{
   author={Santosa, Fadil},
   author={Vogelius, Michael},
   title={First-order corrections to the homogenized eigenvalues of a
   periodic composite medium},
   journal={SIAM J. Appl. Math.},
   volume={53},
   date={1993},
   number={6},
   pages={1636--1668},
   issn={0036-1399},
   review={\MR{1247172 (94h:35188)}},
   doi={10.1137/0153076},
}

\bib{SV2}{article}{
   author={Santosa, Fadil},
   author={Vogelius, Michael},
   title={Erratum to the paper: ``First-order corrections to the homogenized
   eigenvalues of a periodic composite medium'' [SIAM J.\ Appl.\ Math.\ {\bf
   53} (1993), no.\ 6, 1636--1668; MR1247172 (94h:35188)]},
   journal={SIAM J. Appl. Math.},
   volume={55},
   date={1995},
   number={3},
   pages={864},
   issn={0036-1399},
   review={\MR{1331590 (96c:35138)}},
   doi={10.1137/0155046},
}

\bib{bblShraiman}{article}{
   author={Shraiman, B.},
   title={Diffusive transport in a Raleigh-Bernard convection cell},
   journal={Phys.\ Rev.\ A},
   volume={36},
   year={1987},
   pages={261--267}
}

\bib{bblYoung}{article}{
   author={Rhines, Peter B.},
   author={Young, William R.},
   year={1983},
   title={How rapidly is passive scalar mixed within closed streamlines?},
   journal={J.\ Fluid Mech.},
   volume={133},
   pages={135--145},
}

\end{biblist}
\end{bibdiv}%}}}1
\end{document}